\let\oldeqref\eqref
\RenewDocumentCommand\eqref{s m}{%
  \IfBooleanTF#1%
  {\textup{\tagform@{\ref*{#2}}}}
  {\oldeqref{#2}}
}
\newcommand{\sA}{{\mathcal A}}
\newcommand{\sC}{{\mathcal C}}
\newcommand{\sD}{{\mathcal D}}
\newcommand{\sK}{{\mathcal K}}
\newcommand{\sO}{{\mathcal O}}
\newcommand{\sU}{{\mathcal U}}
\newcommand{\sV}{{\mathcal V}}
\newcommand{\sX}{{\mathcal X}}
\newcommand{\sY}{{\mathcal Y}}
\newcommand{\scrC}{{\mathscr C}}
\newcommand{\scrL}{{\mathscr L}}
\newcommand{\scrU}{{\mathscr U}}
\newcommand{\scrX}{{\mathscr X}}
\newcommand{\scrY}{{\mathscr Y}}
\newcommand{\scrZ}{{\mathscr Z}}
\newcommand{\C}{{\mathbb C}}
\newcommand{\N}{{\mathbb N}}
\renewcommand{\P}{{\mathbb P}}
\newcommand{\Q}{{\mathbb Q}}
\newcommand{\R}{{\mathbb R}}
\newcommand{\Z}{{\mathbb Z}}
\newcommand{\gothM}{{\mathfrak M}}
\newcommand{\abs}[1]{{\left|#1\right|}}
\newcommand{\Aut}{\operatorname{Aut}}
\newcommand{\ba}[1]{\overline{#1}}
\newcommand{\BM}{{\operatorname{BM}}}
\newcommand{\ch}{{\rm ch}}
\newcommand{\chern}{{\rm c}}
\newcommand{\codim}{\operatorname{codim}}
\newcommand{\Def}{{\operatorname{Def}}}
\newcommand{\del}{\partial}
\newcommand{\di}{\partial}
\newcommand{\dibar}{\ba\partial}
\newcommand{\End}{{\operatorname{End}}}
\newcommand{\Ext}{{\operatorname{Ext}}}
\newcommand{\sExt}{{\mathcal{E}xt}}
\newcommand{\Hdg}{\operatorname{Hdg}}
\newcommand{\Hom}{\operatorname{Hom}}
\renewcommand{\implies}{\Rightarrow}
\newcommand{\id}{{\rm id}}
\newcommand{\img}{\operatorname{im}}
\newcommand{\into}{{\, \hookrightarrow\,}}
\newcommand{\isom}{{\ \cong\ }}
\newcommand{\lt}{{\rm{lt}}}
\renewcommand{\O}{{\rm O}}
\newcommand{\ohnenull}{{\ \setminus \{0\} }}
\newcommand{\PGL}{{\rm PGL}}
\newcommand{\Pic}{\operatorname{Pic}}
\newcommand{\ratl}{\dashrightarrow}
\newcommand{\red}{{\operatorname{red}}}
\newcommand{\reg}{{\operatorname{reg}}}
\newcommand{\rk}{{\rm rk}}
\newcommand{\sing}{{\operatorname{sing}}}
\newcommand{\Spec}{\operatorname{Spec}}
\newcommand{\SO}{{\rm SO}}
\newcommand{\Sym}{{\rm Sym}}
\renewcommand{\to}[1][]{\xrightarrow{\ #1\ }}
\newcommand{\tensor}{\otimes}
\newcommand{\veps}{\varepsilon}
\newcommand{\vphi}{\varphi}
\newcommand{\wt}[1]{{\widetilde{#1}}}
\newcommand*{\da@rightarrow}{\mathchar"0\hexnumber@\symAMSa 4B }
\newcommand*{\da@leftarrow}{\mathchar"0\hexnumber@\symAMSa 4C }
\newcommand*{\xdashrightarrow}[2][]{%
  \mathrel{%
    \mathpalette{\da@xarrow{#1}{#2}{}\da@rightarrow{\,}{}}{}%
  }%
}
\newcommand{\xdashleftarrow}[2][]{%
  \mathrel{%
    \mathpalette{\da@xarrow{#1}{#2}\da@leftarrow{}{}{\,}}{}%
  }%
}
\newcommand*{\da@xarrow}[7]{%
  \sbox0{$\ifx#7\scriptstyle\scriptscriptstyle\else\scriptstyle\fi#5#1#6\m@th$}%
  \sbox2{$\ifx#7\scriptstyle\scriptscriptstyle\else\scriptstyle\fi#5#2#6\m@th$}%
  \sbox4{$#7\dabar@\m@th$}%
  \dimen@=\wd0 %
  \ifdim\wd2 >\dimen@
    \dimen@=\wd2 %
  \fi
  \count@=2 %
  \def\da@bars{\dabar@\dabar@}%
  \@whiledim\count@\wd4<\dimen@\do{%
    \advance\count@\@ne
    \expandafter\def\expandafter\da@bars\expandafter{%
      \da@bars
      \dabar@ 
    }%
  }%
  \mathrel{#3}%
  \mathrel{%
    \mathop{\da@bars}\limits
    \ifx\\#1\\%
    \else
      _{\copy0}%
    \fi
    \ifx\\#2\\%
    \else
      ^{\copy2}%
    \fi
  }%
  \mathrel{#4}%
}
\newtheoremstyle{citing}
  {}
  {}
  {\itshape}
  {}
  {\bfseries}
  {\textbf{.}}
  {.5em}
  {\thmnote{#3}}
\theoremstyle{plain}
\newtheorem{theorem}[subsection]{Theorem}
\newtheorem{lemma}[subsection]{Lemma}
\newtheorem{corollary}[subsection]{Corollary}
\newtheorem{proposition}[subsection]{Proposition}
\theoremstyle{remark}
\newtheorem{example}[subsection]{Example}
\theoremstyle{definition}
\newtheorem{definition}[subsection]{Definition}
\numberwithin{equation}{section}
\theoremstyle{remark}
\newtheorem{remark}[subsection]{Remark}
\newtheorem*{claim}{Claim}
\theoremstyle{citing}
\newsavebox\myboxA
\newsavebox\myboxB
\newlength\mylenA
\newcommand*\xtilde[2][0.8]{%
    \sbox{\myboxA}{$\m@th#2$}%
    \setbox\myboxB\null
    \ht\myboxB=\ht\myboxA%
    \dp\myboxB=\dp\myboxA%
    \wd\myboxB=#1\wd\myboxA
    \sbox\myboxB{$\m@th\widetilde{\copy\myboxB}$}
    \setlength\mylenA{\the\wd\myboxA}
    \addtolength\mylenA{-\the\wd\myboxB}%
    \ifdim\wd\myboxB<\wd\myboxA%
       \rlap{\hskip 0.5\mylenA\usebox\myboxB}{\usebox\myboxA}%
    \else
        \hskip -0.5\mylenA\rlap{\usebox\myboxA}{\hskip 0.5\mylenA\usebox\myboxB}%
    \fi}
\newbox\usefulbox
\def\getslant #1{\strip@pt\fontdimen1 #1}
\def\xxtilde #1{\mathchoice
 {{\setbox\usefulbox=\hbox{$\m@th\displaystyle #1$}%
    \dimen@ \getslant\the\textfont\symletters \ht\usefulbox
    \divide\dimen@ \tw@ 
    \kern\dimen@ 
    \xtilde{\kern-\dimen@ \box\usefulbox\kern\dimen@ }\kern-\dimen@ }}
 {{\setbox\usefulbox=\hbox{$\m@th\textstyle #1$}%
    \dimen@ \getslant\the\textfont\symletters \ht\usefulbox
    \divide\dimen@ \tw@ 
    \kern\dimen@ 
    \xtilde{\kern-\dimen@ \box\usefulbox\kern\dimen@ }\kern-\dimen@ }}
 {{\setbox\usefulbox=\hbox{$\m@th\scriptstyle #1$}%
    \dimen@ \getslant\the\scriptfont\symletters \ht\usefulbox
    \divide\dimen@ \tw@ 
    \kern\dimen@ 
    \xtilde{\kern-\dimen@ \box\usefulbox\kern\dimen@ }\kern-\dimen@ }}
 {{\setbox\usefulbox=\hbox{$\m@th\scriptscriptstyle #1$}%
    \dimen@ \getslant\the\scriptscriptfont\symletters \ht\usefulbox
    \divide\dimen@ \tw@ 
    \kern\dimen@ 
    \xtilde{\kern-\dimen@ \box\usefulbox\kern\dimen@ }\kern-\dimen@ }}%
 {}}
\newcommand*\xoverline[2][0.75]{%
    \sbox{\myboxA}{$\m@th#2$}%
    \setbox\myboxB\null
    \ht\myboxB=\ht\myboxA%
    \dp\myboxB=\dp\myboxA%
    \wd\myboxB=#1\wd\myboxA
    \sbox\myboxB{$\m@th\overline{\copy\myboxB}$}
    \setlength\mylenA{\the\wd\myboxA}
    \addtolength\mylenA{-\the\wd\myboxB}%
    \ifdim\wd\myboxB<\wd\myboxA%
       \rlap{\hskip 0.5\mylenA\usebox\myboxB}{\usebox\myboxA}%
    \else
        \hskip -0.5\mylenA\rlap{\usebox\myboxA}{\hskip 0.5\mylenA\usebox\myboxB}%
    \fi}
\def\xxoverline #1{\mathchoice
 {{\setbox\usefulbox=\hbox{$\m@th\displaystyle #1$}%
    \dimen@ \getslant\the\textfont\symletters \ht\usefulbox
    \divide\dimen@ \tw@ 
    \kern\dimen@ 
    \overline{\kern-\dimen@ \box\usefulbox\kern\dimen@ }\kern-\dimen@ }}
 {{\setbox\usefulbox=\hbox{$\m@th\textstyle #1$}%
    \dimen@ \getslant\the\textfont\symletters \ht\usefulbox
    \divide\dimen@ \tw@ 
    \kern\dimen@ 
    \xoverline{\kern-\dimen@ \box\usefulbox\kern\dimen@ }\kern-\dimen@ }}
 {{\setbox\usefulbox=\hbox{$\m@th\scriptstyle #1$}%
    \dimen@ \getslant\the\scriptfont\symletters \ht\usefulbox
    \divide\dimen@ \tw@ 
    \kern\dimen@ 
    \xoverline{\kern-\dimen@ \box\usefulbox\kern\dimen@ }\kern-\dimen@ }}
 {{\setbox\usefulbox=\hbox{$\m@th\scriptscriptstyle #1$}%
    \dimen@ \getslant\the\scriptscriptfont\symletters \ht\usefulbox
    \divide\dimen@ \tw@ 
    \kern\dimen@ 
    \xoverline{\kern-\dimen@ \box\usefulbox\kern\dimen@ }\kern-\dimen@ }}%
 {}}
\newcommand{\PH}{{\operatorname{PH}}}
\renewcommand{\div}{{\operatorname{div}}}
\newcommand{\rrk}{{\operatorname{rrk}}}
\newcommand{\ddbar}{{\del\bar\del}}
\def\Mon{\operatorname{Mon}}
\newcommand{\Exc}{{\operatorname{Exc}}}
\newcommand{\Gr}{{\operatorname{Gr}}}
\newcommand{\DP}{{\operatorname{DP}}}
\newcommand{\tf}{\mathrm{tf}}
\def\sp{\operatorname{sp}}
\theoremstyle{definition}
\newtheorem{Ex}[subsection]{Example}
\renewcommand{\sU}{\scrU}
\renewcommand{\sX}{\scrX}
\renewcommand{\sY}{\scrY}
\newcommand{\mylabel}[2]{#2\def\@currentlabel{#2}\label{#1}}
\newcommand{\Mac}{}
\DeclareRobustCommand{\Mac}{%
  M%
  \raisebox{\dimexpr\fontcharht\font`M-\height}{%
    \check@mathfonts\fontsize{\sf@size}{0}\selectfont
    c%
  }%
}
\title{The global moduli theory of symplectic varieties}
\author{Benjamin Bakker}
\address{Benjamin Bakker\\ Department of Mathematics\\ University of Illinois at Chicago\\ 851 S. Morgan St., Chicago, IL 60607}
\email{bakker.uic@gmail.com}
\author{Christian Lehn}
\address{Christian Lehn\\ Fakult\"at f\"ur Mathematik\\ Technische Universit\"at Chemnitz\\
Reichenhainer Stra\ss e 39, 09126 Chemnitz, Germany}
\email{christian.lehn@mathematik.tu-chemnitz.de}
\let\origmaketitle\maketitle
\def\maketitle{
  \begingroup
  \def\uppercasenonmath##1{} 
  \let\MakeUppercase\relax 
  \origmaketitle
  \endgroup
}
\begin{document}
\thispagestyle{empty}

\begin{abstract}
We develop the global moduli theory of symplectic varieties {in the sense of Beauville}.  We prove a number of analogs of classical results from the smooth case, including a global Torelli theorem.  In particular, this yields a new proof of Verbitsky's global Torelli theorem in the smooth case (assuming $b_2\geq 5$) which does not use the existence of a hyperk\"ahler metric or twistor deformations.
\end{abstract}

\subjclass[2010]{32J27, 32G13, 53C26 (primary), 32S45, 14B07, 14J10, 32S15 (secondary).}
\keywords{hyperk\"ahler manifold, primitive symplectic variety, global torelli, locally trivial deformation}

\maketitle

\setlength{\parindent}{1em}
\setcounter{tocdepth}{1}


\tableofcontents

\section{Introduction}\label{section intro}
\thispagestyle{empty}

A \emph{symplectic variety} $X$ (in the sense of Beauville \cite{Bea00}) is a normal variety admitting a nondegenerate closed holomorphic $2$-form $\sigma\in H^0(X^\reg,\Omega^2_{X^\reg})$ on its regular part which extends holomorphically on some resolution of singularities $\pi:Y\to X$.  If $X$ is compact, $H^1(X,\sO_X)=0$, and $\sigma$ is unique up to scaling, we say $X$ is a \emph{primitive} symplectic variety.  We consider these varieties a singular analog of (compact) irreducible symplectic manifolds which is as general as possible such that a reasonable global moduli theory can still be established.
 
Irreducible symplectic manifolds are one of the three main building blocks of compact Kähler manifolds with vanishing first Chern class by a theorem of Beauville--Bogomolov \cite[Théorème 1]{Bea83}, and their geometry is very rich.  In particular, Verbitsky's global Torelli theorem \cite[Theorem 1.17]{Ver13} gives a precise description of the global deformations of a symplectic manifold in terms of the Hodge structure on its second cohomology. 

Recent work of Druel--Greb--Guenancia--Höring--Kebekus--Peternell \cite{GKKP11,DG18, Dru18, Gue16, GGK19, HP19} has shown a version of the above Beauville--Bogomolov decomposition theorem for singular projective varieties with trivial canonical class, see \cite[Theorem 1.5]{HP19}, and the ``holomorphic-symplectic" factors\footnote{They were called \emph{irreducible symplectic} by Greb--Kebekus--Peternell \cite[Definition 8.16]{GKP16}, where \emph{irreducible} refers to the decomposition theorem.} that show up are a special case of the primitive symplectic varieties we consider.  This level of generality is important because singularities are often unavoidable in higher-dimensional geometry, for instance in the minimal model program.  Our results show that the geometry of singular holomorphic-symplectic varieties enjoys the same richness as that of smooth ones, and deformation theory---especially deformations to \emph{non-projective} varieties---is as essential a part of the picture as in the smooth case. Interestingly, whereas it has proven difficult to produce new deformation types of \emph{smooth} irreducible symplectic varieties, in the singular case a number of ``new" deformation types---that is, deformation types which do not seemingly arise from holomorphic symplectic manifolds---can be constructed, see Example \ref{exhs}(2).

Our main result is a global Torelli theorem for primitive symplectic varieties in general with surjectivity of the period map in the $\Q$-factorial\footnote{There is a subtlety with the definition of $\Q$-factoriality in the analytic category:  requiring every divisor to be $\Q$-Cartier is potentially different from requiring every rank one torsion-free sheaf to have an invertible reflexive power (see Section \ref{paragraph qfact}).}  terminal case.  Before stating the theorem, let us fix some notation.  The torsion-free part $H^2(X,\Z)_\tf:=H^2(X,\Z)/\mathrm{torsion}$ of the second cohomology of a primitive symplectic variety $X$ carries a pure weight two Hodge structure (see Lemma \ref{lemma peternell}) which is further endowed with an integral locally trivial deformation-invariant quadratic form $q_{X}$ called the Beauville--Bogomolov--Fujiki (BBF) form (see Section~\ref{section bbf details}).  Fixing a lattice $\Lambda$ and denoting its quadratic form by $q$, a $\Lambda$-marking of $X$ is an isomorphism $\mu: (H^2(X,\Z)_\tf,q_{X})\xrightarrow{\cong} (\Lambda,q)$.  The set of isomorphism classes of $\Lambda$-marked primitive symplectic varieties $(X,\mu)$ is given the structure of an analytic space $\gothM_\Lambda$ by gluing the bases of \emph{locally trivial} Kuranishi families (see Definition~\ref{definition marked moduli}).  In fact, $\gothM_\Lambda$ is a not-necessarily-Hausdorff complex manifold by the unobstructedness of locally trivial deformations (see Theorem~\ref{theorem deflt is smooth}).  

We obtain a period map $P:\mathfrak{M}_\Lambda\to\Omega_\Lambda$ to the period domain $\Omega_\Lambda\subset \P( \Lambda_\C)$ by sending $(X,\mu)$ to $\mu(H^{2,0}(X))$ and it is a local isomorphism (see Proposition \ref{proposition local torelli}). There is a Hausdorff reduction $H:\gothM_\Lambda\to\overline\gothM_\Lambda$ where $\overline\gothM_\Lambda$ is a Hausdorff complex manifold and $H$ identifies inseparable points (see Section \ref{section torelli}), and we moreover have a factorization
\[\xymatrix{
&\overline{\mathfrak{M}}_\Lambda\ar[rd]^{\xxoverline{P}}&\\
\mathfrak{M}_\Lambda\ar[ru]^{H}\ar[rr]_{P}&&\Omega_\Lambda.
}\]

We now state our main result:

\begin{theorem}\label{maintorelli} Assume that $\rk(\Lambda)\geq 5$.  Then for each connected component $\gothM$ of the $\Lambda$-marked moduli space $\gothM_\Lambda$ we have:
\begin{enumerate}
\item The monodromy group $\Mon(\gothM)\subset \O(\Lambda)$ is of finite index;
\item $P:\mathfrak{M}\to \Omega_\Lambda$ is bijective over Mumford--Tate general points and in general the fibers consist of pairwise bimeromorphic varieties;
\item $\xxoverline{P}:\overline{\mathfrak{M}}\to \Omega_\Lambda$ is an isomorphism onto the complement of countably many maximal Picard rank periods;
\item If moreover one point of $\gothM$ corresponds to a primitive symplectic variety with $\Q$-factorial terminal singularities, then the same is true of every point and $\xxoverline{P}:\overline{\mathfrak{M}}\to \Omega_\Lambda$ is an isomorphism.

\end{enumerate}

\end{theorem}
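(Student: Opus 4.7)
The plan is to deduce the four statements simultaneously from a careful analysis of the Hausdorff reduction $H\colon \mathfrak{M}\to\overline{\mathfrak{M}}$ combined with bimeromorphic geometry of primitive symplectic varieties.

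Local Torelli (Proposition \ref{proposition local torelli}) already gives that $P$, and hence $\overline{P}$, is a local biholomorphism. The first structural input I would use is a characterization of inseparable points: two marked primitive symplectic varieties $(X_1,\mu_1),(X_2,\mu_2)$ are inseparable in $\mathfrak{M}_\Lambda$ precisely when there is a marking-compatible bimeromorphic map $X_1\dashrightarrow X_2$ which is an isomorphism in codimension one. This singular analog of Huybrechts' theorem identifies exactly which points $H$ collapses, and reduces the injectivity question for $\overline{P}$ to the rigidity of birational classes.

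Next I would establish surjectivity of $\overline{P}$ onto the complement of countably many maximal Picard rank periods. The strategy is to lift paths in $\Omega_\Lambda$ using local Torelli and to complete one-parameter degenerations by a filling argument. The analytic input is that a locally trivial family of marked primitive symplectic varieties over a punctured disk whose periods extend continuously to the origin admits, after possibly performing small bimeromorphic modifications (furnished by the singular MMP), a completion over the full disk; the possibly distinct choices of limit become identified in $\overline{\mathfrak{M}}$ by the inseparability characterization of the previous step. The bad loci in $\Omega_\Lambda$ correspond to periods where the BBF lattice acquires extra classes that obstruct such completions.

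With $\overline{P}$ realized as a covering of $\Omega_\Lambda$ minus a countable subset, the monodromy group $\Mon(\mathfrak{M})\subset\O(\Lambda)$ acts on the fibers of $\overline{P}$. For part (1), I would exhibit sufficiently many explicit monodromy operators---reflections $s_v$ arising from walls of the birational K\"ahler cone for suitable $v\in\Lambda$---to generate a finite index subgroup of $\O(\Lambda)$; this uses standard lattice arithmetic in rank $\geq 5$. Parts (2) and the injectivity portion of (3) then follow from the fact that over a Mumford--Tate general period, any self-bimeromorphism of a fiber preserving the marking must be the identity. For part (4), locally trivial deformation invariance of $\Q$-factoriality and terminality propagates the property across $\mathfrak{M}$, and the decisive additional fact is that any marking-compatible bimeromorphic map between $\Q$-factorial terminal primitive symplectic varieties is an isomorphism, since a nontrivial flop would shift walls of the K\"ahler cone in a way detected by the marking; hence $H$ is the identity on such a component and $\overline{P}=P$ is an isomorphism onto all of $\Omega_\Lambda$.

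The hard part will be the surjectivity/filling step: without a hyperk\"ahler metric or twistor family, surjectivity must be extracted from the existence of minimal-model-type limits of degenerating families, and identifying precisely which Picard rank strata must be excluded requires a careful analysis of the birational K\"ahler cones.
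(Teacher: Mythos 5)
Your proposal is missing the single ingredient that drives the paper's entire argument: Verbitsky's Ratner-theoretic classification of orbit closures for a finite-index subgroup of $\O(\Lambda)$ acting on $\Omega_\Lambda$ (Proposition \ref{proposition orbits}), which is where the hypothesis $\rk(\Lambda)\geq 5$ actually enters. In the paper, surjectivity of $\xxoverline{P}$ onto the non-maximal-Picard-rank locus is \emph{not} obtained by lifting paths and filling in degenerations: the image of $P$ is open and $\Mon$-invariant, so its complement is a union of orbit closures, hence a union of the submanifolds $T_\ell$, and one only needs to hit a very general point of each rank-one $T_\ell$, which is done with polarized families and a totally-real dimension count. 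Your filling step cannot replace this: the MMP-based limit theorem (Theorem \ref{filling holes}) requires a \emph{projective} family with $\Q$-factorial \emph{terminal} fibers and finite monodromy, which is exactly why the paper uses it only at the very end, to reach the maximal Picard rank periods in case (4); it is unavailable for a general path in $\Omega_\Lambda$. Likewise, injectivity of $\xxoverline{P}$ does not follow from triviality of marked self-bimeromorphisms: two separated points of $\overline{\gothM}$ over the same period are not inseparable, so Theorem \ref{theorem inseparable implies birational} says nothing about them. The paper instead proves \emph{finiteness} of the fiber over a very general polarized period by a boundedness argument (Matsusaka, Hilbert schemes, Corollary \ref{corollary betti locus}), upgrades this to constancy of fiber cardinality by density of orbits, obtains a covering of $\Omega_{\rrk=0}$, and concludes by simple connectivity of the complement of countably many real-codimension-$\geq 3$ submanifolds (again $\rk(\Lambda)\geq 5$); a surjective local homeomorphism is not automatically a covering, so your ``with $\xxoverline{P}$ realized as a covering'' has no support. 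The finite-index statement then falls out for free ($\gothM^+$ has finitely many components and $\Mon(\gothM)$ is the stabilizer of one); your plan to generate a finite-index subgroup by wall reflections is not available for an arbitrary locally trivial deformation type, where no such supply of monodromy operators is known.

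Two of your concrete claims are also false. First, a marking-compatible bimeromorphic map between $\Q$-factorial terminal primitive symplectic varieties need not be an isomorphism: Mukai flops of smooth hyperk\"ahler manifolds produce non-isomorphic, inseparable marked pairs with the same period, so $H$ is \emph{not} the identity on such components; part (4) asserts that $\xxoverline{P}$, not $P$, is an isomorphism (compare part (2), which allows nontrivial fibers of $P$ consisting of pairwise bimeromorphic varieties). Second, your opening characterization of inseparability is stated as an equivalence, but the paper proves only one implication in general (inseparable implies bimeromorphic, Theorem \ref{theorem inseparable implies birational}) together with a partial converse requiring an isomorphism in codimension one inducing an isomorphism on $\Pic_\Q$ (Corollary \ref{corollary huybrechts strong}); no step of the argument may rely on the full biconditional.
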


Theorem \ref{maintorelli} of course also applies to the smooth case, and yields a new proof of Verbitsky's global Torelli theorem. Note that $\Q$-factorial terminal singularities form a natural class of singularities for symplectic varieties---see Example \ref{exhs} for some examples.  First, such singularities are well-suited to MMP techniques (see for example Section \ref{section proj degen} and Example \ref{exhs}(\ref{egsacca})).  Second,  symplectic varieties always have canonical singularities, and any projective primitive symplectic variety $X$ admits a crepant partial resolution with $\Q$-factorial terminal singularities $X'$ (a so-called $\Q$-factorial terminalization) whose deformation theory controls that of $X$ (see Section \ref{sect Qfact}).  Note that a $\Q$-factorial terminal $K$-trivial variety does not admit a further crepant resolution.  A version of Theorem \ref{maintorelli} has been proven by Menet \cite{Men20} for a symplectic varieties with quotient singularities; see Example~\ref{exhs}(\ref{egsingmod}) for an explicit example of a $\Q$-factorial terminal symplectic variety which does not have quotient singularities.

In \cite[Theorem 1.3]{BL16} the authors prove Theorem \ref{maintorelli} (\emph{with} surjectivity in part (3)) in the case where $\gothM$ parametrizes primitive symplectic varieties admitting a crepant resolution. The proof crucially uses that simultaneous crepant resolutions exist in locally trivial families of such varieties, as then Verbitsky's global Torelli theorem can be applied to the crepant resolution. Note that by definition, $\gothM$ consists of varieties of a fixed locally trivial deformation type which allows one to prove that either all varieties it parametrizes admit a crepant resolution or none.

The main difficulty in the general setting is that while one could try to reduce to the $\Q$-factorial terminal case by passing to a simultaneous $\Q$-factorial terminalization, even in this case a new strategy is needed as Verbitsky's proof (as well as Huybrechts' proof of the surjectivity of the period map \cite[Theorem 8.1]{Huy99}) fundamentally uses the existence of hyperk\"ahler metrics and twistor deformations.  We instead prove Theorem~\ref{maintorelli} directly using global results on the geometry of the period domain via Ratner theory (as first investigated by Verbitsky \cite{Ver15,Ver17}) together with finiteness results coming from algebraic geometry.  The surjectivity in Theorem \ref{maintorelli} then follows from a generalization to the $\Q$-factorial terminal case of work of Koll\'ar--Laza--Sacc\`a--Voisin \cite{KLSV18} on projective degenerations using MMP techniques.

In fact, there is another problem with the naive generalization of the argument of \cite{BL16}: $\Q$-factorial terminalizations are not guaranteed to exist in the analytic setting.  In the projective case the existence of a $\Q$-factorial terminalization is a consequence of deep results of Birkar--Cascini--Hacon--\Mac Kernan \cite{BCHM10} on the termination of an appropriate version of the MMP, but it is not even clear \emph{a priori} that a symplectic variety can be deformed to a projective one (although Namikawa \cite{Nam02} has results in this direction).  For this reason, we need a projectivity criterion for symplectic varieties, analogous to Huybrechts' criterion \cite[Theorem~3.11]{Huy99} for hyperk\"ahler manifolds:
\begin{theorem}\label{mainproj} Let $X$ be a primitive symplectic variety, and assume $\alpha \in H^2(X,\Q)$ is a $(1,1)$-class with $q_X(\alpha)> 0$.  Then $X$ is projective.
\end{theorem}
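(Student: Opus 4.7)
The plan is to adapt Huybrechts' strategy for the projectivity criterion in the smooth hyperk\"ahler setting \cite[Theorem~3.11]{Huy99} to primitive symplectic varieties.  The key idea is to use a Weyl-type reflection group acting through monodromy to move $\alpha$ into the K\"ahler cone, whence Kodaira's criterion applies.

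\textbf{Step 1.} The positive cone $\sC_X\subset H^{1,1}(X,\R)$ is the connected component of $\{\gamma\in H^{1,1}(X,\R):q_X(\gamma)>0\}$ containing a K\"ahler class.  Since $q_X(\alpha)>0$, after replacing $\alpha$ by $-\alpha$ if necessary, we may assume $\alpha\in\sC_X$.

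\textbf{Step 2.}  I would then invoke the analog of Huybrechts's chamber decomposition \cite[Theorem~3.8]{Huy99} in the singular setting: the K\"ahler cone $\sK_X$ is an open chamber of $\sC_X$ cut out by integral wall-classes $\delta\in H^{1,1}(X,\Z)$ of negative BBF-square, and the Weyl group $W_X$ generated by reflections $s_\delta$ acts through integral Hodge isometries of $H^2(X,\Z)_\tf$ with $\overline{\sK_X}$ as a fundamental chamber for its action on $\sC_X$.  Consequently there exists $w\in W_X$ with $w(\alpha)\in\overline{\sK_X}$, and $w(\alpha)$ is again an integral $(1,1)$-class in $\sC_X$ of positive BBF-square.

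\textbf{Step 3.}  By perturbing $\alpha$ within $H^{1,1}(X,\Q)$ beforehand to a generic class avoiding all (countably many) walls, one can arrange $w(\alpha)\in\sK_X$.  Then $w(\alpha)$ is a rational K\"ahler class, and Kodaira's projectivity criterion adapted to normal K\"ahler varieties (cf.\ Namikawa \cite{Nam02}) yields that $X$ is projective.

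\textbf{Main obstacle.}  The essential technical input is the chamber structure of the K\"ahler cone for primitive symplectic varieties and the integral action of a Weyl-type reflection group with the closure of the K\"ahler chamber as a fundamental domain---the singular counterpart of \cite[Theorem~3.8]{Huy99}.  Establishing this requires a careful analysis of wall-divisors arising from birational contractions of $\Q$-factorial terminalizations, together with the deformation-invariance of the BBF form; these ingredients are developed elsewhere in the paper.
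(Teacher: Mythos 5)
There is a genuine gap at Step~2, and it is precisely the step you flag as the ``main obstacle.'' The chamber decomposition of the positive cone by wall classes, together with a Weyl-type reflection group $W_X$ acting by integral Hodge isometries with $\overline{\sK_X}$ as a fundamental domain, is not available for primitive symplectic varieties, and it is not established anywhere in this paper. Even in the smooth case this kind of cone structure rests on the existence of hyperk\"ahler metrics, twistor deformations, and the theory of wall divisors---exactly the tools the paper is built to avoid---and in fact Huybrechts' own proof of \cite[Theorem~3.11]{Huy99} does not proceed this way: the corrected argument (see \cite{Huy03erratum}) runs through the Demailly--P\u aun theorem and K\"ahler currents, not through reflections. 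So your plan reduces the theorem to an input that is strictly harder than (and in the singular setting unknown compared to) the statement being proved. A secondary issue: even granting a wall-and-chamber structure, you would need the reflections $s_\delta$ to preserve the K\"ahler cone decomposition and to be realized by automorphisms or monodromy operators in order to conclude anything about $X$ itself rather than about an abstract Hodge isometry class; this requires the global Torelli machinery that in this paper is only proved \emph{after} (and using) the projectivity criterion, so the argument would be circular.

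For comparison, the paper's route (Theorem~\ref{theorem projectivity criterion}) is: establish the Fujiki relations and the non-existence of odd-dimensional subvarieties on a very general locally trivial deformation; prove a weak singular Demailly--P\u aun theorem (Theorem~\ref{theorem demailly paun}) and deduce that $\DP(\sX_t)=\sC_{\sX_t}$ for very general $t$ (Theorem~\ref{theorem demailly paun cone}); then approximate $\alpha$ by Demailly--P\u aun classes $\alpha_{t_i}$ on nearby fibers, pass to a simultaneous resolution, and use weak compactness of K\"ahler currents to conclude that $\pi^*\alpha$ is big on a resolution $Y\to X$. Hence $X$ is Moishezon, and being K\"ahler with rational singularities it is projective by \cite[Theorem~1.6]{Nam02}. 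Your Steps~1 and~3 are consistent with this endgame (a rational big class plus Namikawa's theorem), but the bridge from ``positive square'' to ``big'' must go through currents and deformation to very general fibers, not through a reflection group.
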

\begin{corollary}Every primitive symplectic variety is locally trivially deformation equivalent to a projective primitive symplectic variety.
\end{corollary}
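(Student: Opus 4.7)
The plan is to deform $X$ inside $\gothM_\Lambda$ to a primitive symplectic variety admitting a rational $(1,1)$-class of positive BBF-square, and then invoke Theorem \ref{mainproj}. Fix a marking $\mu\colon H^2(X,\Z)_\tf\xrightarrow{\sim}\Lambda$, so that $(X,\mu)\in\gothM_\Lambda$. By the local Torelli theorem (Proposition \ref{proposition local torelli}) there is an open neighborhood $U$ of $(X,\mu)$ such that $P\colon U\to V:=P(U)\subset\Omega_\Lambda$ is an isomorphism onto an open subset of $\Omega_\Lambda$. If I can find $\ell\in V$ and $\alpha\in\Lambda_\Q$ satisfying $q(\alpha)>0$ and $q(\alpha,\ell)=0$, then the marked variety $(X',\mu'):=P^{-1}(\ell)\in U$ carries the rational $(1,1)$-class $(\mu')^{-1}(\alpha)$ of positive BBF-square; hence $X'$ is projective by Theorem \ref{mainproj}, and since $(X',\mu')$ lies in the Kuranishi family of $(X,\mu)$, the varieties $X$ and $X'$ are locally trivially deformation equivalent.

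It remains to show that such a pair $(\ell,\alpha)$ exists in any non-empty open $V\subset\Omega_\Lambda$, which I would prove by perturbation. Pick any $\ell_0\in V$ and write $\ell_0=\C\sigma$ with $\sigma=u+iv$ for some $u,v\in\Lambda_\R$; then $W_0:=\R u+\R v\subset\Lambda_\R$ is a positive definite real $2$-plane. Since the BBF form has signature $(3,b_2-3)$ with $b_2\geq 3$ (the three positive directions being contributed by $\sigma,\bar\sigma$ and a K\"ahler class), the orthogonal complement $W_0^\perp\subset\Lambda_\R$ has signature $(1,b_2-3)$ and contains some $\beta$ with $q(\beta)>0$. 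Approximating $\beta$ by rational vectors gives $\alpha_n\in\Lambda_\Q$ with $q(\alpha_n)>0$ and $q(\alpha_n,\ell_0)\to 0$. The hypersurfaces $\Omega_{\alpha_n}:=\{\ell\in\Omega_\Lambda\,:\,q(\alpha_n,\ell)=0\}$ are cut out by non-zero linear forms on $\Lambda_\C$ and converge near $\ell_0$ to $\{\ell:q(\beta,\ell)=0\}\ni\ell_0$, so the implicit function theorem furnishes $\ell_n\in V\cap\Omega_{\alpha_n}$ for $n\gg 0$, as required.

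The substance of the argument is really Theorem \ref{mainproj}; modulo that, the only further inputs are the openness of the period map and the elementary density statement above. The most delicate technical point is simply ensuring $b_2(X)\geq 3$ under the primitive symplectic hypothesis so that $W_0^\perp$ carries a positive direction; otherwise the argument is routine.
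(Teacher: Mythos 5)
Your proof is correct and is essentially the argument the paper intends: deform within the locally trivial Kuranishi family so that a rational class of positive BBF-square becomes of type $(1,1)$, then invoke Theorem~\ref{mainproj} (compare the density statement and its proof in Corollary~\ref{corollary density of projectives}, where one approximates a K\"ahler class by rational classes of positive square). The point you flag as delicate is automatic: by Lemma~\ref{lemma signature bbf form} the BBF form has signature $(3,b_2-3)$, so $b_2\geq 3$ always holds and $W_0^\perp$ does contain a positive vector.
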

The proof uses a (weak) singular analog of the Demailly--P\u{a}un theorem on the numerical characterization of the K\"ahler cone.  

As an application of Theorem \ref{maintorelli}, we can in fact conclude that terminalizations of symplectic varieties exist in the \emph{non-projective} case, up to a bimeromorphism:

\begin{theorem}\label{mainterminal}  Let $X$ be a primitive symplectic variety with $b_2(X)\geq 5$.  Then there is a primitive sympletic variety $X'$ that is bimeromorphic and locally trivially deformation-equivalent to $X$ that admits a $\Q$-factorial terminalization:  that is, there exists a (compact) $\Q$-factorial terminal K\"ahler variety $Y$ and a crepant map $\pi:Y\to X'$.  
\end{theorem}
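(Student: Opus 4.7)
The plan is to combine the projective deformation result (the Corollary following Theorem~\ref{mainproj}) with BCHM to produce a terminalization in the projective case, and then transport it through the global Torelli theorem for $\Q$-factorial terminal symplectic varieties (Theorem~\ref{maintorelli}(4)).

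By the Corollary to Theorem~\ref{mainproj}, $X$ is locally trivially deformation equivalent to a projective primitive symplectic variety $X_0$. Applying \cite{BCHM10} to $X_0$ yields a $\Q$-factorial terminalization $\pi_0 \colon Y_0 \to X_0$; the variety $Y_0$ is again primitive symplectic (as a crepant partial resolution of a symplectic variety), projective, $\Q$-factorial, and terminal. Set $\Lambda = H^2(X,\Z)_\tf \cong H^2(X_0,\Z)_\tf$ and $\Lambda' = H^2(Y_0,\Z)_\tf$. The pullback $\pi_0^*$ induces a primitive isometric embedding $\iota \colon \Lambda \hookrightarrow \Lambda'$ whose orthogonal complement $N := \iota(\Lambda)^\perp$ is spanned by the classes of exceptional divisors. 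Fix compatible $\Lambda$- and $\Lambda'$-markings of $X$, $X_0$, and $Y_0$, and let $\gothM'$ denote the connected component of $\gothM_{\Lambda'}$ containing $Y_0$. Since $b_2(Y_0) \geq b_2(X) \geq 5$ and $Y_0$ is $\Q$-factorial terminal, Theorem~\ref{maintorelli}(4) shows that every point of $\gothM'$ parametrizes a $\Q$-factorial terminal primitive symplectic variety, and that the Hausdorff-reduced period map $\xxoverline{\gothM}' \to \Omega_{\Lambda'}$ is an isomorphism.

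Let $p \in \Omega_\Lambda$ be the period of $X$. Regarded via $\iota$ as a line in $\Lambda'_\C$, it determines a period $p' \in \Omega_{\Lambda'}$, and by surjectivity there is a $\Q$-factorial terminal primitive symplectic variety $Y \in \gothM'$ with period $p'$. Because $N \subset \iota(\Lambda)^\perp$ and $p' \subset \iota(\Lambda)_\C$, the classes in $N$ are orthogonal to both $p'$ and $\overline{p'}$, hence of type $(1,1)$ on $Y$. One then contracts the corresponding effective divisors via a projective morphism $\pi \colon Y \to X'$. The resulting $\pi$ is crepant, so $X'$ is again a primitive symplectic variety and $\pi$ is the required $\Q$-factorial terminalization, and under the induced marking $X'$ has period $p$. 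Carrying out this contraction in families produces a morphism from a connected neighborhood in $\gothM'$ containing $Y_0$ and $Y$ to $\gothM_\Lambda$ sending $Y_0 \mapsto X_0$ and $Y \mapsto X'$; this places $X_0$ and $X'$ in the same component of $\gothM_\Lambda$, and hence $X$ and $X'$ are locally trivially deformation equivalent. Since $X$ and $X'$ lie in the same component and share the period $p$, Theorem~\ref{maintorelli}(2) implies that they are bimeromorphic.

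The main obstacle is the simultaneous contraction of the exceptional divisors of $\pi_0$ throughout the relevant locally trivial family in $\gothM'$, and in particular the verification that effectivity, base-point freeness, and contractibility of the corresponding extremal rays transfer from the projective $Y_0$ to the possibly non-projective $Y$. This should rely on the $\Q$-factorial terminal hypothesis, which stabilizes the relevant numerical geometry under locally trivial deformations, together with the singular analog of the Demailly--P\u{a}un theorem invoked in the proof of Theorem~\ref{mainproj} to certify the Kählerness of the intermediate models.
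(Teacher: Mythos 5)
Your overall scaffolding (project to a projective $X_0$, apply \cite{BCHM10} to get $Y_0\to X_0$, then move through moduli) matches the paper's, but the core of your argument has a genuine gap that you yourself flag and do not close: the contraction of the exceptional divisors on the possibly non-projective $Y$, and the assertion that this contraction can be ``carried out in families'' over a connected global region of $\gothM'$ containing both $Y_0$ and $Y$. Nothing in the paper (or in standard Kähler MMP technology) lets you run a base-point-freeness or cone-contraction argument on a non-projective $Y$ obtained from surjectivity of the period map; indeed the whole point of Theorem \ref{mainterminal} is that such contractions are \emph{not} yet available in the Kähler setting. There is also a secondary unaddressed issue: even granting the fiberwise contraction, the locus in $\gothM'$ where the classes in $N$ stay of type $(1,1)$ is a closed Hodge locus, and you would need both its connectivity (to link $Y_0$ to $Y$) and a relative contraction over all of it to conclude that $X_0$ and $X'$ land in the same component of $\gothM_\Lambda$.

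The paper inverts the logic precisely to avoid this. It never deforms $Y_0$ to the period of $X$; instead it deforms the \emph{pair} $Y_0\to X_0$ only over the Kuranishi germ $\Def^\lt(X_0)$, where Proposition \ref{prop defo} (resting on \cite[Proposition 11.4]{KM92} and Namikawa's simultaneous terminalization, together with Corollary \ref{corollary deformation qfactorial} and Theorem \ref{theorem basic symplectic} to preserve $\Q$-factoriality and terminality) gives a simultaneous $\Q$-factorial terminalization of every fiber of $\scrX_0\to\Def^\lt(X_0)$. It then uses the orbit-closure classification (Proposition \ref{proposition orbits}) to find, when $\rrk(X)=0$, a point $t'$ \emph{in that germ} whose fiber $X'$ is inseparable from $X$ in moduli (via Theorem \ref{theorem torelli}, using injectivity of $\xxoverline{P}$ off the maximal Picard rank locus rather than surjectivity for the terminal component), and concludes bimeromorphy from Theorem \ref{theorem inseparable implies birational}; the cases $\rrk(X)=1$ (projective periods dense in the orbit closure) and $\rrk(X)=2$ ($X$ already projective) are handled separately. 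So the terminalization is produced where it already exists — on a small deformation of a projective model — and only the \emph{variety} $X'$, not the contraction, is matched to $X$ through the period domain. To repair your argument you would either have to prove a Kähler cone/contraction theorem on $Y$, or restructure along the paper's lines.
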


We view Theorem \ref{mainterminal} as an indication that the deformation theoretic tools we develop might be used to generalize the MMP for projective symplectic varieties \cite{Dru11,LP16} to the K\"ahler setting, and this will be pursued in a subsequent paper. 

In addition to the global arguments, the proofs of Theorems \ref{maintorelli}, \ref{mainproj}, and \ref{mainterminal} require a careful analysis of the infinitesimal locally trivial deformation theory of not-necessarily-projective symplectic varieties.  There are a number of new complications all critically stemming from the fact that one can no longer bootstrap classical results on the geometry of hyperk\"ahler manifolds via passing to a crepant resolution.  In particular, we must provide:
\begin{enumerate}
\item[(i)] An analysis of the Hodge theory of rational and symplectic singularities in the non-projective setting, using recent results of Kebekus--Schnell \cite{KS18} on extending holomorphic forms.
\item[(ii)] An adaptation of the results of Koll\'ar--Laza--Sacc\`a--Voisin \cite{KLSV18} on limits of projective families in the singular setting.  This requires a singular analog of a theorem of Verbitsky saying that for a primitive symplectic variety $X$, the cup product map $\Sym^kH^2(X,\Q)\to H^{2k}(X,\Q)$ is injective for $2k\leq\dim X$.
\item[(iii)] A description of the deformation theory of terminalizations.  In particular, this requires a careful treatment of $\Q$-factoriality in the analytic category, as there are several nonequivalent generalizations of the corresponding notion in the algebraic category.
\end{enumerate}

\subsection*{Previous work}
In \cite{BL16} the authors extended many of the classical results about compact irreducible symplectic manifolds to primitive symplectic varieties admitting a crepant resolution through the study of their locally trivial deformations.  Menet \cite{Men20} has proven a version of the global Torelli theorem for certain primitive symplectic varieties with orbifold singularities using twistor deformations. There are many interesting ideas in his work that have influenced parts of the present paper, especially concerning the projectivity criterion.  The local deformation theory (and in particular the local Torelli theorem) of primitive symplectic varieties has been treated by many authors, notably by Namikawa \cite{Nam01a,Nam01b,Nam06} and Kirschner  \cite{Kir15}.

\subsection*{Outline}
In Section~\ref{section prelim} we review basic notions and results about the Hodge theory of rational singularities, Kähler spaces, big and nef classes, and $\Q$-factoriality in the analytic category. Section  \ref{section symplectic} is devoted to primitive symplectic varieties and their Hodge theory.  In Section~\ref{section defo} we show locally trivial deformations of symplectic varieties are unobstructed.  In Section~\ref{section BBF} we recall the BBF form and deduce the local Torelli theorem.  We also analyze the deformation theory of $\Q$-factorial terminalizations and prove some topological results, including the existence of Fujiki relations and the analog of a theorem of Verbitsky  discussed in (ii) above.  In Section~\ref{section projectivity criterion} we prove a (weak) singular analog of the Demailly--P\u{a}un theorem and apply it to deduce the projectivity criterion, Theorem \ref{mainproj} (see Theorem \ref{theorem projectivity criterion}).  We also prove analogs of results of Huybrechts \cite{Huy99} and \cite{BL16} on the inseparability of bimeromorphic symplectic varieties in moduli, including part (2) of Theorem \ref{maintorelli} (see Theorem \ref{theorem inseparable implies birational}  and Corollary \ref{corollary huybrechts strong}).  In Section~\ref{section proj degen} we indicate the necessary changes to \cite{KLSV18} to show the existence of limits of projective families for which the period does not degenerate in the $\Q$-factorial terminal setting.  In Section~\ref{section torelli} we prove parts (1), (3), and (4) of Theorem \ref{maintorelli} (see Theorem \ref{theorem torelli}).  In Section~\ref{section qfactorial terminalizations} we apply the deformation theory of terminalizations and the global Torelli theorem to prove Theorem \ref{mainterminal} (see Theorem \ref{theorem qfactorial terminalization}).

For those interested in the proof of the global Torelli theorem in the smooth case, Section~\ref{section torelli} can be read independently, as the results used from previous sections are standard in the smooth case\footnote{Except for the required results from \cite{KLSV18}, which can be quoted without modification.}. 

\subsection*{Acknowledgments.} We benefited from discussions, remarks, emails of Valery Alexeev, Andreas Höring, Daniel Huybrechts, Stefan Kebekus, Manfred Lehn, Thomas Peternell, Antonio Rapagnetta, Bernd Schober, and Christian Schnell.  The first named author would like to thank Giulia Sacc\`a for conversations related to Section~\ref{section proj degen}. Both authors are grateful to the referees for a very careful reading and many suggestions that have greatly improved the article.

Benjamin Bakker was partially supported by NSF grants DMS-1702149 and DMS-1848049. Christian Lehn was supported by the DFG through the research grants Le 3093/2-1, Le 3093/2-2, and  Le 3093/3-1.

\subsection*{Notation and Conventions} 
A resolution of singularities of a variety $X$ is a proper surjective bimeromorphic morphism $\pi:Y \to X$ from a nonsingular variety $Y$. The term variety will denote an integral separated scheme of finite type over $\C$ in the algebraic setting or an irreducible and reduced separated complex space in the complex analytic setting.

\section{Preliminaries}\label{section prelim}

A complex variety $X$ is said to have rational singularities if it is normal and for any resolution of singularities $\pi:Y \to X$ and any $i > 0$ one has $R^i\pi_*\sO_Y = 0$.  Recall that the Fujiki class $\scrC$ consists of all those compact complex varieties which are meromorphically dominated by a compact Kähler manifold, see \cite[Definition~1.1]{Fuj78}. This is equivalent to saying that there is a resolution of singularities by a compact Kähler manifold by \cite[Lemma~1.1]{Fuj78}.

The following lemma is well-known; we refer to \cite[Lemma 2.1]{BL16} for a proof. 

\begin{lemma}\label{lemma peternell}
Let $\pi:Y\to X$ be a proper bimeromorphic morphism where $X$ is a complex variety with rational singularities. Then, $\pi^*:H^1(X,\Z) \to H^1(Y,\Z)$ is an isomorphism and the sequence
\begin{equation}\label{eq leray}
 0 \to H^2(X,\Z) \to[\pi^*] H^2(Y,\Z) \to H^0(X,R^2\pi_*\Z)
\end{equation}
is exact. In particular, if $X$ is compact and $Y$ is a compact manifold of Fujiki class $\scrC$, then $H^i(X,\Z)$ carries a pure Hodge structure for $i=1, 2$. Moreover, $\pi^*H^{1,1}(X,\Z)$ is the subspace of $H^{1,1}(Y,\Z)$ of all classes that vanish on the classes of $\pi$-exceptional curves.
\end{lemma}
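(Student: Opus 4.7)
The main tool will be the Leray spectral sequence $E_2^{p,q} = H^p(X, R^q\pi_*\Z_Y) \Rightarrow H^{p+q}(Y,\Z)$ combined with the rational singularities hypothesis. The plan is: compute the low pushforwards of $\Z_Y$, then read the first two assertions off the five-term exact sequence, and finally invoke mixed Hodge theory for the Hodge-theoretic claims.

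First I would identify $\pi_*\Z_Y = \Z_X$, which follows from Zariski's Main Theorem since $X$ is normal and $\pi$ is proper bimeromorphic (so the fibers of $\pi$ are connected). The essential input is then the vanishing $R^1\pi_*\Z_Y = 0$. To get it, I would push forward the exponential sequence $0\to \Z_Y\to \sO_Y\to \sO_Y^*\to 0$ and compare with the exponential sequence on $X$. Normality of $X$ (so that $\Sing X$ has codimension $\geq 2$) gives $\pi_*\sO_Y = \sO_X$, and a Hartogs-type extension argument---any invertible holomorphic function on $\pi^{-1}(U)$ descends to an invertible function on $U$ since its zero locus would have codimension $\geq 2$---gives $\pi_*\sO_Y^* = \sO_X^*$. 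Rational singularities then give $R^1\pi_*\sO_Y = 0$, and comparison with the exponential sequence on $X$ forces $R^1\pi_*\Z_Y = 0$. With these computations, the five-term exact sequence of Leray reads
\[0 \to H^1(X,\Z) \to H^1(Y,\Z) \to 0 \to H^2(X,\Z) \to H^2(Y,\Z) \to H^0(X, R^2\pi_*\Z),\]
which immediately yields \eqref{eq leray} and the isomorphism in degree one.

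For the Hodge-theoretic claim in the Fujiki-class setting, I would invoke Deligne's theory of mixed Hodge structures (in the form applicable to compact varieties of Fujiki class $\scrC$): $H^i(Y,\Q)$ is pure of weight $i$ since $Y$ is compact K\"ahler, $H^i(X,\Q)$ carries a mixed Hodge structure with weights $\leq i$ since $X$ is compact, and $\pi^*$ is a morphism of MHS. Injectivity of $\pi^*$ in degrees $\leq 2$ (just established) forces $W_{i-1}H^i(X,\Q)=0$ for $i=1,2$, so $H^i(X,\Q)$ is pure, inherited from $Y$.

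For the final statement, $\pi^*H^{1,1}(X,\Z)$ is cut out in $H^{1,1}(Y,\Z)$ by the condition of mapping to zero in $H^0(X,R^2\pi_*\Z)$; and by proper base change, the stalk of $R^2\pi_*\Z$ at $x$ is $H^2(\pi^{-1}(x),\Z)$. One inclusion is easy: for $\pi$-exceptional $C$, the projection formula gives $\pi^*\alpha\cdot C = \alpha\cdot\pi_*C = 0$. For the converse, one needs that a $(1,1)$-class whose restriction to every fiber pairs trivially with curve classes in that fiber in fact restricts to zero on every fiber. I anticipate this as the main obstacle: making this rigorous requires control of the Hodge theory of the fibers of $\pi$ (e.g.\ via a Hodge-theoretic refinement of $R^2\pi_*\C$ coming from the decomposition theorem or from a local cohomology argument along the exceptional locus) to conclude that $(1,1)$-classes on the (possibly singular, possibly reducible) fibers are detected by pairings with algebraic curves. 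Granted this, the image of $\beta\in H^{1,1}(Y,\Z)$ in each stalk $H^2(\pi^{-1}(x),\Z)$ vanishes whenever $\beta$ kills every exceptional curve, and hence $\beta\in\pi^*H^{1,1}(X,\Z)$ via the exact sequence \eqref{eq leray}.
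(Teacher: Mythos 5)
Your treatment of the exact sequence and of the purity statement is correct and follows the same route as the proof the paper points to: Leray for $\pi$, the vanishing $R^1\pi_*\Z_Y=0$ extracted from the pushed-forward exponential sequence together with $R^1\pi_*\sO_Y=0$, and strictness of the injective morphism of mixed Hodge structures $\pi^*$ into the pure cohomology of $Y$. (Minor caveat: $R^1\pi_*\sO_Y=0$ is immediate from rational singularities only when $Y$ is smooth or itself has rational singularities; for a general proper bimeromorphic $\pi$ one should first dominate $Y$ by a resolution.) The genuine gap is exactly where you flagged it: the converse inclusion in the final assertion. You correctly reduce it, via \eqref{eq leray} and proper base change, to showing that a class $\beta\in H^{1,1}(Y,\Z)$ killing all $\pi$-exceptional curves restricts to zero in $H^2(\pi^{-1}(x),\Z)$ for every $x$, but the input you then ask for --- that $(1,1)$-classes on the fibers are detected by pairing with curves --- is both unproved and, as stated, false: it fails for real $(1,1)$-classes (contract the negative section $S$ of $\P_S(\sO_S\oplus L^{-1})$ over a Picard rank one K3 surface $S$ to the vertex of the cone; a real $(1,1)$-class restricting on $S$ to a nonzero class orthogonal to the polarization kills every exceptional curve but does not descend). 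So integrality of $\beta$ must enter, and it does not in your sketch.

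Here is what is actually needed to close the gap. Write $F=\pi^{-1}(x)$ with irreducible components $F_i$ and resolutions $\widetilde F_i\to F_i$. The image of $H^2(Y,\Q)\to H^2(F,\Q)$ is a sub-mixed-Hodge-structure isomorphic to a quotient of the pure weight-two $H^2(Y,\Q)$, hence is pure of weight two and injects into $\Gr^W_2H^2(F,\Q)$, which in turn injects into $\bigoplus_i H^2(\widetilde F_i,\Q)$; so it suffices to kill the pullback of $\beta$ to each $\widetilde F_i$. Next, $F$ is Moishezon: any proper bimeromorphic $\pi$ is dominated by a blow-up of $X$, whose fiber over $x$ is projective and surjects onto $F$. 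Finally, by Lefschetz $(1,1)$ on the class $\scrC$ manifold $Y$ one has $\beta=\chern_1(L)$ for a line bundle $L$, and $L|_{\widetilde F_i}$ is numerically trivial because every curve in $\widetilde F_i$ maps to an exceptional curve or a point; on a Moishezon manifold a numerically trivial line bundle has torsion first Chern class (pass to a projective modification and use that homological and numerical equivalence agree for divisors there). Hence $\beta|_F=0$ in $H^2(F,\Q)$ and $\beta\in\pi^*H^2(X,\Q)\cap H^2(Y,\Z)$, which gives the claim up to the usual care about torsion. Without the Moishezon input on the fibers and the restriction to integral (N\'eron--Severi) classes, the step you deferred cannot be carried out.
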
 

For a complex space $X$, recall that $\Omega_X^{[p]}$ denotes the sheaf of reflexive $p$-forms:
\begin{definition}\label{definition reflexive differentials}
 Let $X$ be a complex space. The module of reflexive $p$-forms on $X$ is defined as
 \[
  \Omega_X^{[p]}:=\left(\Omega_X^p\right)^{\vee\vee}
 \]
where $F^\vee=\Hom_{\sO_X}(F,\sO_X)$ is the dual of a sheaf of $\sO_X$-modules.
\end{definition}
If $X$ is a reduced normal complex space and $j:U \into X$ denotes the inclusion of the regular locus, then $\Omega_X^{[p]}=j_*\Omega^p_U$. For a resolution of singularities $\pi:Y \to X$ we moreover have $\pi_*\Omega^p_Y=\Omega_X^{[p]}$
by \cite[Corollary~1.7]{KS18} if in addition $X$ has rational singularities.  If finally $X$ is also of Fujiki class $\scrC$, then for $p+q\leq 2$ the graded pieces of the Hodge filtration can be identified with $H^q(X,\Omega_X^{[p]})$, see e.g. \cite[Corollary~2.3]{BL16}.

\subsection{Kähler spaces}

The notion of a Kähler complex space, which we now recall, is due to Grauert \cite[\S 3, 3., p. 346]{Gra62}. Recall that a \emph{smooth function} on a complex space $Z$ is by definition just a function $f:Z\to \R$ such that under a local holomorphic embedding of $Z$ into an open set $U\subset \C^n$, there is a smooth (i.e., $C^\infty$) function on $U$ (in the usual sense) that restricts to $f$ on $Z$.

\begin{definition}\label{definition kaehler form}
Let $Z$ be a complex space. A \emph{Kähler form} for $Z$ is  given by an open covering $Z=\bigcup_{i\in I} U_i$ and smooth strictly plurisubharmonic functions $\vphi_i:U_i\to \R$ such that on $U_{ij}:=U_i\cap U_j$ the function $\vphi_i\vert_{U_{ij}} - \vphi_j\vert_{U_{ij}}$ is pluriharmonic, i.e., locally the real part of a holomorphic function. 
\end{definition}

There are two important sheaves related to Kähler forms. We denote by $\PH_Z$ the sheaf of pluriharmonic functions on $Z$ and by $C^\infty_Z$ the sheaf of smooth real-valued functions on $Z$. Then we have the sequences
\begin{equation}\label{eq psh sequence}
0 \to \PH_Z \to C^\infty_Z\to C^\infty_Z/\PH_Z\to 0
\end{equation}

and 

\begin{equation}\label{eq psh class sequence}
0 \to \R_Z \to[i] \sO_Z \to[R] \PH_Z \to 0,
\end{equation}
where $i$ stands for multiplication by $\sqrt{-1}$ and $R$ is given by taking the real part. Thus, a Kähler form on $Z$ gives rise to an element $\omega\in H^0(Z,C_Z^\infty/\PH_Z)$. Successively applying the connecting homomorphisms of \eqref{eq psh sequence} and \eqref{eq psh class sequence} we obtain classes $[\omega] \in H^1(Z,\PH_Z)$ and $[\omega]\in H^2(Z,\R)$. The latter is called \emph{the Kähler class of $\omega$}.

\begin{definition}\label{definition kaehler class}
Let $Z$ be a reduced complex space. A Kähler class on $Z$ is a class $\kappa\in H^2(Z,\R)$ which is the Kähler class of some Kähler form on $Z$. The Kähler cone is the set
\[
\sK_Z:=\{\alpha \in H^2(Z,\R)\mid \alpha = [\omega] \textrm{ for some Kähler form } \omega \}
\]
\end{definition}

\begin{remark} \label{remark kaehler} There are several things we wish to observe.
\begin{enumerate}
\item It follows from the definition that for a compact complex space $Z$ the Kähler cone $\sK_Z$ is open in the image of $H^1(Z,\PH_Z)\to H^2(Z,\R)$. Indeed, being strictly plurisubharmonic is stable under small perturbations and the connecting homomorphism $H^0(Z,C_Z^\infty/\PH_Z)\to H^1(Z,\PH_Z)$ is surjective as $C_Z^\infty$ is a fine sheaf.
\item We can describe the Kähler forms alternatively as follows: these are Kähler forms  $\omega$ on $Z^\reg$ in the usual sense such that for every $p\in Z$ there is an open neighborhood $p \in U \subset Z$ and a closed embedding $U \into V$ into a smooth Kähler manifold where the restriction of the Kähler form of $V$ to $U^\reg$ equals $\omega\vert_{U^\reg}$.
\item Let us observe that by applying the (real) operator $i\ddbar$ a Kähler form also gives rise to a global section of $\sA^{1,1}_{Z}$  where $\sA^{p,q}_{Z}$ denotes the sheaf of smooth $(p,q)$-forms with $\C$-coefficients on $Z$---which is defined in the same manner as the sheaf of $C^\infty$-functions. This is because $\ddbar\vphi_i=\ddbar \vphi_j$ on $U_{ij}$ as $\ddbar$ annihilates pluriharmonic functions.
The cohomology class of $\omega$ in $H^2(Z,\sA^\bullet_{Z})$ is the image of the Kähler class under the natural map induced by the morphism $\R_Z\to \sA^\bullet_{Z}$.
\end{enumerate}
\end{remark}

Let us recall the following properties of Kähler spaces. We will use throughout the text, sometimes without explicit mention.

\begin{proposition}\label{proposition kaehler space}\hspace{1in}

\begin{enumerate}
	\item Every subspace of a Kähler space is Kähler.
	\item A smooth complex space is Kähler if and only if it is a K\"ahler manifold in the usual sense.
	\item Every reduced Kähler space has a resolution of singularities by a Kähler manifold.
\end{enumerate}
\end{proposition}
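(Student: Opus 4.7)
For (1), given a Kähler form $\{(U_i,\varphi_i)\}_{i\in I}$ on $Z$ and a closed complex subspace $W\subset Z$, I would take $\{(U_i\cap W,\varphi_i|_{U_i\cap W})\}$. By the very definition of a smooth function on a complex space (via local embeddings into some $\C^n$), the restrictions $\varphi_i|_{U_i\cap W}$ remain smooth. Strict plurisubharmonicity is also inherited because any local embedding of $W$ factors through a local embedding of $Z$; in particular, the extensions $\tilde\varphi_i$ witnessing strict plurisubharmonicity on the ambient $\C^n$ of $Z$ serve the same purpose for $W$. Finally, the pluriharmonicity of $(\varphi_i-\varphi_j)|_{U_{ij}\cap W}$ is immediate since the real part of a holomorphic function on $Z$ restricts to the real part of a holomorphic function on $W$.

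For (2), I would observe that the Grauert definition, specialized to a complex manifold, is exactly the classical local-potential characterization of a Kähler form. A Kähler datum $\{(U_i,\varphi_i)\}$ gives rise to the globally defined $(1,1)$-form $\omega:=i\ddbar\varphi_i$ on each $U_i$, which is well-defined on overlaps precisely because $\ddbar$ annihilates pluriharmonic functions; it is closed and strictly positive by the strict plurisubharmonicity of the $\varphi_i$. Conversely, the local $\partial\bar\partial$-lemma on a complex manifold produces local potentials for any classical Kähler form, and the discrepancy between two local potentials is pluriharmonic.

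For (3), which is the substantive assertion, the plan is to invoke the theorem of Varouchas (\emph{Stabilité de la classe des variétés kählériennes par certains morphismes propres}). By Hironaka's resolution of singularities, available in the analytic category, there exists a resolution $\pi:Y\to Z$ which is a locally finite composition of blow-ups along smooth centers contained in the singular locus. I would then argue by induction on the height of the resolution tower, reducing to the statement that if $Z$ is a Kähler space and $\sigma:\tilde Z\to Z$ is the blow-up along a smooth center $C\subset Z$, then $\tilde Z$ carries a Kähler form. Such a form is built by combining $\sigma^*\omega_Z$ (which is merely semi-positive, degenerating along tangent directions to the fiber) with a small positive multiple of a form $\eta$ that is strictly positive on the exceptional fibers, constructed locally from potentials of the type $\log\|\cdot\|^2$ associated to the ideal sheaf of $C$ and then globalized using a partition of unity; strict positivity of $\sigma^*\omega_Z+\varepsilon\eta$ for $\varepsilon$ sufficiently small is verified fiberwise.

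The main obstacle is the construction of $\eta$ in (3): one has to produce a smooth, globally defined $(1,1)$-form on $\tilde Z$ whose local potentials compare correctly under the Grauert definition, and whose positivity contribution exactly cancels the degeneracy of $\sigma^*\omega_Z$ transverse to the exceptional divisor. This is the content of Varouchas's theorem, and I would cite it rather than reprove it.
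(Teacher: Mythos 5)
Your proposal is correct and ultimately rests on the same source as the paper: the paper's entire proof is a citation of Varouchas \cite[II, 1.3.1 Proposition]{Var89}, which covers all three assertions, and your treatment of the substantive part (3) likewise defers to Varouchas for the construction of the K\"ahler potentials on the blow-up. Your direct verifications of (1) and (2) from Grauert's definition are accurate but are details the paper leaves to the reference.
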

\begin{proof}
This is a consequence of \cite[II, 1.3.1 Proposition]{Var89}.
\end{proof}

The proposition in particular implies that compact Kähler spaces are of Fujiki class $\scrC$ so that their singular cohomology groups carry a mixed Hodge structure. For $X\in\scrC$, we may thus define:
\begin{equation}\label{eq real hodge classes}
H^{k,k}(X,\R):=\Hom(\R(-k),H^{2k}(X,\R))=F^k H^{2k}(X,\C)\cap H^{2k}(X,\R).
\end{equation}
Note that the weights that show up in the mixed Hodge structure on $H^{k}(X,\Z)$ are $\leq k$---the argument for class $\scrC$ varieties is the same as in the algebraic case, cf. \cite[Theorem 5.39]{PS08}.

\begin{proposition}\label{proposition kaehler is 11}
Let $X$ be a reduced compact Kähler space. Then $\sK_X \subset H^{1,1}(X,\R)$.
\end{proposition}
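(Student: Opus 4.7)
The plan is to combine the construction of the K\"ahler class via \eqref{eq psh class sequence} with the Hodge-theoretic interpretation of $H^2(X,\sO_X)$. By the definition of $\kappa$ (Definition \ref{definition kaehler class}) and the long exact sequence associated to $0 \to \R_X \to \sO_X \to \PH_X \to 0$, the class $\kappa$ lies in the image of $H^1(X,\PH_X) \to H^2(X,\R)$, i.e., in the kernel of the natural map $H^2(X,\R) \to H^2(X,\sO_X)$ induced by the inclusion of constants $\R_X \hookrightarrow \sO_X$.

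For $X$ of class $\scrC$ the Hodge filtration on $H^*(X,\C)$ is realized by the Deligne--Du Bois filtered complex $(\ul{\Omega}_X^\bullet, F)$, with $F^p H^k(X, \C) = \HH^k(X, F^p \ul{\Omega}_X^\bullet)$ and $\Gr_F^0 H^k(X, \C) = \HH^k(X, \ul{\Omega}_X^0)$. The augmentation $\C_X \to \ul{\Omega}_X^\bullet$ factors canonically through $\sO_X \to \ul{\Omega}_X^0$, so at the level of hypercohomology the composition
\[
H^2(X, \C) \to H^2(X, \sO_X) \to \HH^2(X, \ul{\Omega}_X^0) = H^2(X, \C)/F^1 H^2(X, \C)
\]
coincides with the natural projection onto $\Gr_F^0$. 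Consequently, since $\kappa$ vanishes in $H^2(X, \sO_X)$, it also vanishes in $H^2(X,\C)/F^1$; together with the reality of $\kappa$, this gives $\kappa \in F^1 H^2(X, \C) \cap H^2(X, \R) = H^{1,1}(X,\R)$.

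The main subtlety is the derived-category compatibility of $\C_X \to \ul{\Omega}_X^\bullet$ with its factorization through $\sO_X$. For rational singularities this is immediate from \cite[Corollary~2.3]{BL16}, which directly identifies $H^2(X, \sO_X)$ with $\Gr_F^0 H^2(X, \C)$; the general class~$\scrC$ case follows from the construction of the Du Bois complex. A parallel route, avoiding explicit reference to the Du Bois machinery, is to choose a resolution $\pi : Y \to X$ by a compact K\"ahler manifold (Proposition~\ref{proposition kaehler space}), observe that $\pi^*\omega = i\ddbar(\vphi_i\circ\pi)$ is a closed semipositive $(1,1)$-form on $Y$, so $\pi^*\kappa \in H^{1,1}(Y,\R)$, and then transport this conclusion back to $X$ using strictness of the MHS morphism $\pi^*$ together with injectivity of $\pi^*$ on $H^2(X,\Q)$ (the latter being automatic in the rational-singularities setting by Lemma~\ref{lemma peternell}).
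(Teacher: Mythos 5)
Your main argument (the first route) is correct and genuinely different from the paper's. The paper works with the explicit representative $i\ddbar\{\vphi_i\}$ inside the complex $\sA_X^\bullet$ of smooth forms and invokes the Ancona--Gaveau complex $\Lambda_X^\bullet$ of \cite{AG06} --- of which $\sA_X^\bullet$ is a filtered subcomplex computing Deligne's Hodge filtration --- to see directly that this representative lies in $F^1$. You instead run the ``exponential-sequence''-style argument: $\kappa$ comes from $H^1(X,\PH_X)$ by construction, hence dies in $H^2(X,\sO_X)$, and the composite $H^2(X,\C)\to H^2(X,\sO_X)\to \HH^2(X,\ul{\Omega}_X^0)$ is the projection onto $\Gr_F^0$, whence $\kappa\in F^1$. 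This buys you a cleaner, representative-free proof, but be aware that the step ``coincides with the natural projection onto $\Gr_F^0$'' carries all the weight: it requires the existence, $E_1$-degeneration, and MHS-compatibility of the filtered Du Bois complex for compact complex spaces of class $\scrC$ (not just algebraic varieties), which is an input of exactly the same depth as the paper's appeal to \cite{AG06} and should be cited precisely rather than waved at; in the rational-singularities case \cite[Corollary 2.3]{BL16} does suffice, as you say. Your second route is essentially verbatim the remark the paper makes right after its proof, and you are right to flag that it needs injectivity of $\pi^*$ on $H^2(X,\Q)$: without it, strictness only gives $\kappa\in F^1H^2(X,\C)+\ker(\pi^*)$, and $\ker(\pi^*)\subset W_1H^2(X,\C)$ is not contained in $F^1$ in general, so that route does not cover the full class $\scrC$ statement and cannot substitute for the first.
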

\begin{proof}
The claim is easily verified using a construction of Ancona and Gaveau \cite{AG06} some properties of which we briefly recall. In this proof, all references are to \cite{AG06} if not mentioned otherwise. For a reduced complex space $X$, in Chapter~II.2 they construct a complex $\Lambda_X^\bullet$ which is a fine resolution of the constant sheaf $\C_X$. In fact, $\Lambda_X^\bullet$ is not unique but we may fix one such complex once and for all. A section of $\Lambda_X^\bullet$  by II.2, Definition~2.1 is a collection of differential forms (of shifted degrees) on an associated hypercovering $\{X_\ell \to X\}_{\ell \in L}$ where the $X_\ell$ are smooth. In Chapter~II.3 they use this complex to construct Deligne's mixed Hodge structure on $H^k(X,\Z)$ if $X$ is Kähler (or more generally of Fujiki class $\scrC$). As discussed in Section~II.2.8, the complex $\sA_X^\bullet$ of smooth differential forms on $X$ (introduced in Remark~\ref{remark kaehler} above) is a subcomplex of $\Lambda_X^\bullet$ and this inclusion clearly sends the filtration $F^p\sA_X^k:=\oplus_{r\geq p}\sA_X^{r,k-r}$ to the Hodge filtration. For a Kähler form $\omega=\{\vphi_i\}_{i\in I}\in H^0(X,C^\infty_X/\PH_X)$, the claim now follows, because $\sqrt{-1}\di\dibar\{\vphi_i\}_{i\in I} \in F^1\sA_X^1(X)$.
\end{proof}
Observe that if in addition $X$ has rational singularities, the claim of the proposition simply follows from Lemma~\ref{lemma peternell} and strictness of the pullback for the Hodge filtration.

\subsection{Big and nef cohomology classes}\label{section ddbar cohomology}

We briefly recall the definition of $\ddbar$-cohomo\-logy for a complex manifold $X$. As before, we denote $\sA_X^k$ respectively $\sA_X^{p,q}$ the sheaf of differential $k$-forms respectively $(p,q)$-forms with values in $\C$. Then $\ddbar$-cohomology is defined as
\begin{equation}\label{eq ddbar cohomology}
H^{p,q}_\ddbar(X):=\frac{\ker\left(d:\sA_X^{p,q}(X)\to \sA_X^{p+q+1}(X)\right)}{\img\left(i\ddbar:\sA_X^{p-1,q-1}(X)\to \sA_X^{p,q}(X)\right)}.
\end{equation}

Similarly, we write $H^{p,p}_\ddbar(X,\R)$ if we take cohomology of $\R$-valued differential forms (which is different from zero only for $p=q$). Note that $i\ddbar$ in the above formula defines a real operator.

In algebraic geometry, bigness and nefness are important notions for line bundles. In the complex analytic world, these notions can also be defined for real cohomology classes as we now recall.

\begin{definition}\label{definition big and nef}
Let $X$ be a compact complex manifold. A cohomology class $\alpha \in H_\ddbar^{1,1}(X,\R)$ is called \emph{nef} if for some hermitian form $\omega$ on $X$ and for every $\veps>0$ it can be represented by a smooth $(1,1)$-form $\eta_\veps$ such that $\eta_\veps\geq -\veps\omega$. 
A \emph{Kähler current} is a closed positive $(1,1)$-current $T$ such that $T \geq \omega$ in the sense of currents. A class $\alpha \in H_\ddbar^{1,1}(X)$ is called  \emph{big} if it can be represented by a Kähler current. 
\end{definition}

We refer to \cite[Chapter 3, 1.]{GH94} or \cite[Chapter 1]{Dem12} for a general reference on currents and notions of positivity.

\begin{remark}\label{remark ddbar for class c manifolds}
On compact manifolds of Fujiki class $\scrC$ (in particular on compact Kähler manifolds) the natural map from $\ddbar$-cohomology to de Rham cohomology is injective and gives an identification of $H^{p,q}_\ddbar(X)$ with $H^{p,q}(X)$. This follows directly from the $\di\dibar$-Lemma, see e.g. \cite[(5.21) and (5.22) Theorem]{DGMS75} for manifolds of class $\scrC$.
\end{remark}

\subsection{$\Q$-factoriality}\label{paragraph qfact}

Let us spend a moment to discuss the notion of $\Q$-factoriality. A normal algebraic variety $Z$ is called \emph{$\Q$-factorial} if for every Weil divisor $D$ on $Z$ there is $m\in \N$ such that $mD$ is Cartier.  In the algebraic category, $\Q$-factoriality is local for the Zariski topology.  Recall from \cite[Proposition~2.7]{Har94} that Weil divisor classes are in bijective correspondence with isomorphism classes of reflexive sheaves of rank one: to a Weil divisor $D$ on $Z$ one associates the sheaf $\sO_Z(D)$ defined by
 \[
  U\mapsto \sO_Z(D)(U):=\{ f \in \C(Z)\mid D\vert_U + \div\left(f\vert_U\right) \geq 0\},
 \]
which is easily seen to be reflexive. So $\Q$-factoriality can be equivalently characterized using reflexive sheaves.

Finally, assume that $Z$ is compact, let $\pi:Z' \to Z$ be a resolution of singularities, and let $E_1, \ldots, E_m$ be the prime divisors contained in the exceptional locus $\Exc(\pi)$.  By \cite[(12.1.6) Proposition]{KM92}, the variety $Z$ is $\Q$-factorial if and only if 
\begin{equation}\label{eq qfactoriality characterization}
\img\left( H^2(Z',\Q) \to H^0\left(Z,R^2\pi_*\Q_{Z'}\right)\right) = \img\left( \bigoplus_{i=1}^m \Q[E_i] \to H^0(Z,R^2\pi_*\Q_{Z'})\right).
\end{equation}
See also \cite[\S 2 (i)]{Nam06} for an argument for the \emph{only if}-direction. We summarize:

\begin{lemma}\label{lemma algebraic qfactoriality}
Let $Z$ be a normal algebraic variety over $\C$. Then the following are equivalent.
\begin{enumerate}
	\item\label{lemma algebraic qfactoriality item qfactorial} $Z$ is $\Q$-factorial.
	\item Every Zariski open subset $U \subset Z$ is $\Q$-factorial.
	\item\label{lemma algebraic qfactoriality item reflexive} For every reflexive sheaf $L$ on $Z$ of rank $1$, there is $n\in \N$ such that $\left(L^{\tensor n}\right)^{\vee\vee}$ is invertible.
\end{enumerate}
If in addition $Z$ is compact and has rational singularities, the above statements are equivalent to
\begin{enumerate}
\setcounter{enumi}{3}
	\item The equality \eqref{eq qfactoriality characterization} holds for some resolution $\pi:Z'\to Z$.
\end{enumerate}
\end{lemma}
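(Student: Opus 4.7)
The plan is to establish (1) $\Leftrightarrow$ (2) $\Leftrightarrow$ (3) by elementary arguments from the Weil-divisor / rank-one reflexive-sheaf dictionary recalled just before the lemma, and then to deduce (1) $\Leftrightarrow$ (4) under the additional compactness and rational-singularities hypotheses by invoking \cite[(12.1.6) Proposition]{KM92} (compare \cite[\S 2(i)]{Nam06} for one direction).

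First, for (1) $\Leftrightarrow$ (3), I would use the bijection $D \leftrightarrow \sO_Z(D)$ between Weil-divisor classes and rank-one reflexive sheaves: the class of $nD$ corresponds to $(\sO_Z(D)^{\otimes n})^{\vee\vee}$, and a Weil divisor is Cartier precisely when its associated reflexive sheaf is invertible. Since every rank-one reflexive sheaf has the form $\sO_Z(D)$, the equivalence is a direct translation. For (1) $\Leftrightarrow$ (2), the implication (2) $\Rightarrow$ (1) is trivial by taking $U = Z$. Conversely, given a Weil divisor $D$ on a Zariski open subset $U \subset Z$, its closure $\bar D$ in $Z$ is a Weil divisor, so by (1) some multiple $n\bar D$ is Cartier on $Z$; the restriction $nD = (n\bar D)|_U$ is then Cartier on $U$ since the Cartier property is Zariski-local.

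For the substantive equivalence (1) $\Leftrightarrow$ (4), I would follow \cite[(12.1.6) Proposition]{KM92}. By Lemma~\ref{lemma peternell} applied to a resolution $\pi:Z'\to Z$, the subspace $\pi^*H^2(Z,\Q)\subset H^2(Z',\Q)$ coincides with the kernel of $H^2(Z',\Q)\to H^0(Z,R^2\pi_*\Q_{Z'})$, so \eqref{eq qfactoriality characterization} can be rewritten as the cleaner condition $H^2(Z',\Q) = \pi^*H^2(Z,\Q)+\sum_i\Q[E_i]$. One half is essentially formal: if $nD$ is Cartier then $n[\tilde D] = \pi^*[nD] - \sum a_i [E_i]$ in $H^2(Z',\Q)$ for appropriate integers $a_i$, so each strict-transform class lies in the right-hand side.

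The hard part, which I regard as the main obstacle, is the passage between the divisor-class statement (1) and the purely topological statement (4). For (1) $\Rightarrow$ (4) one must promote the decomposition from strict-transform classes of Weil divisors to \emph{all} of $H^2(Z',\Q)$; for (4) $\Rightarrow$ (1) one must, starting from a decomposition $[\tilde D]=\pi^*\gamma+\sum c_i[E_i]$, recognize that the cohomology class $\gamma\in H^2(Z,\Q)$ actually comes from a $\Q$-line bundle on $Z$. Both rely on rational singularities controlling the image of $H^2(Z',\Q)$ in the local cohomology and on the Lefschetz $(1,1)$-theorem on the Kähler manifold $Z'$ to realize the relevant classes as divisor classes; these steps are carried out in the references cited above, and I would simply import them.
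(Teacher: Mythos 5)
Your proposal is correct and follows essentially the same route as the paper: the paper's proof only writes out the key identity $\sO_Z(nD)=(\sO_Z(D)^{\otimes n})^{\vee\vee}$ (obtained by pushforward from the regular locus) for (1)$\Leftrightarrow$(3), treats (1)$\Leftrightarrow$(2) as the standard Zariski-locality of $\Q$-factoriality exactly as in your closure/restriction argument, and defers (1)$\Leftrightarrow$(4) to \cite[(12.1.6) Proposition]{KM92} and \cite[\S 2(i)]{Nam06}, just as you do.
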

\begin{proof}
For the equivalence of \eqref{lemma algebraic qfactoriality item qfactorial} and \eqref{lemma algebraic qfactoriality item reflexive} one only needs that for a Weil divisor $D$ on $Z$ we have 
\[
\sO_Z(nD)= (\underbrace{\sO_Z(D) \tensor \ldots \tensor \sO_Z(D)}_{n-\textrm{times}})^{\vee\vee}
\]
which can be obtained by pushforward and the fact that it holds on the regular part. 
\end{proof}

In the analytic category, the situation is a little more subtle. We have several different notions which turn out to be non-equivalent, see Proposition \ref{proposition analytic qfactoriality} and Example~\ref{example qfactoriality}.

\begin{definition}\label{definition qfactorial}
A normal complex analytic variety $Z$ is called \emph{divisorially $\Q$-factorial} if for every Weil divisor $D$ on $Z$ there is $m\in \N$ such that $mD$ is Cartier and it is called \emph{$\Q$-factorial} if for every reflexive sheaf $L$ on $Z$ of rank $1$, there is $n\in \N$ such that $\left(L^{\tensor n}\right)^{\vee\vee}$ is invertible. We say that $Z$ is \emph{locally analytically (divisorially) $\Q$-factorial} if every open set $U\subset X$ in the Euclidean topology is (divisorially) $\Q$-factorial.
\end{definition}

Clearly, local analytic (divisorial) $\Q$-factoriality implies (divisorial) $\Q$-factoriality. The converse however is not true. The reason is that there are usually many more local divisors than global divisors, e.g. one cannot obtain a global divisor by taking the closure of a divisor on a small open subset. There might be no global divisors at all, see e.g. Example \ref{example qfactoriality}, which is also the reason why divisorial $\Q$-factoriality is not the right property to ask for and one should rather work with $\Q$-factoriality (defined in terms of rank one reflexive sheaves). 

\begin{proposition}\label{proposition analytic qfactoriality}
Let $Z$ be a normal complex analytic variety and consider the following statements:
\begin{enumerate}
	\item\label{proposition analytic qfactoriality item local linebundles} $Z$ is locally analytically $\Q$-factorial.
	\item\label{proposition analytic qfactoriality item local divisors} $Z$ is locally analytically divisorially $\Q$-factorial.
	\item\label{proposition analytic qfactoriality item linebundles} $Z$ is $\Q$-factorial.
	\item\label{proposition analytic qfactoriality item divisors} $Z$ is divisorially $\Q$-factorial.
\end{enumerate}
Then we have the following implications:
\[\xymatrix{
\eqref{proposition analytic qfactoriality item local linebundles} \ar@{=>}[r]\ar@{=>}[d] & \eqref{proposition analytic qfactoriality item linebundles} \ar@{=>}[d]\\
\eqref{proposition analytic qfactoriality item local divisors} \ar@{=>}[r] & \eqref{proposition analytic qfactoriality item divisors}\\
}\]

Moreover, suppose $Z$ is also compact of class $\scrC$ with rational singularities.  Then $Z$ is $\Q$-factorial if and only if for some resolution $\pi:Z'\to Z$ we have
\begin{equation}\label{eq weaker condition}
  \img\left( \Pic(Z')_\Q \to H^0(Z,R^2\pi_*\Q_{Z'})\right)=\img\left( \bigoplus_{i=1}^m \Q[E_i] \to H^0(Z,R^2\pi_*\Q_{Z'})\right).
\end{equation}
\end{proposition}
\begin{proof}
The implications \eqref{proposition analytic qfactoriality item local linebundles} $\implies$ \eqref{proposition analytic qfactoriality item linebundles} $\implies$ \eqref{proposition analytic qfactoriality item divisors} and \eqref{proposition analytic qfactoriality item local linebundles} $\implies$ \eqref{proposition analytic qfactoriality item local divisors} $\implies$ \eqref{proposition analytic qfactoriality item divisors} are immediate. 

The last part is a slight adaption of Koll\'ar-Mori \cite[(12.1.6) Proposition]{KM92}, replacing \eqref{eq qfactoriality characterization} with \eqref{eq weaker condition} which is what is actually used there.  Briefly, if $Z$ is $\Q$-factorial, then for any line bundle $M$ on $Z'$, the sheaf $L:=(\pi_*M)^{\vee\vee}$ is reflexive and therefore $\pi^*((L^k)^{\vee\vee})\cong M^k(E)$ for some divisor $E$ whose support is contained in the exceptional locus.  Hence, $\Pic(Z')_\Q=\pi^*\Pic(Z)_\Q+\sum_i\Q[E_i]$, which implies \eqref{eq weaker condition}.  Conversely, if \eqref{eq weaker condition} is satisfied, then for any rank one reflexive sheaf $L$ on $Z$ we can find a divisor $E$ whose support is contained in the exceptional locus and for which $M:=(\pi^*L)^{\vee\vee}(E)$ is numerically trivial on fibers.  But then by \cite[(12.1.4) Proposition]{KM92}, $\pi_*(M^k)$ is a line bundle for some $k$, and therefore by normality $(L^k)^{\vee\vee}$ is invertible.
\end{proof}

\section{Symplectic varieties}\label{section symplectic}

For the remainder of this paper, we will use the term (primitive) symplectic variety in the following sense.

\begin{definition}
Following Beauville \cite{Bea00}, a \emph{symplectic variety} is a pair $(X,\sigma)$ consisting of a normal variety $X$ and a closed holomorphic symplectic form $\sigma \in H^0(X^\reg,\Omega_X^2)$ on $X^\reg$ such that there is a resolution of singularities $\pi:Y \to X$ for which $\pi^*\sigma$ extends to a holomorphic form on $Y$. A \emph{primitive symplectic variety} is a normal compact K\"ahler variety $X$ such that $H^1(X,\sO_X)=0$ and $H^0(X^\reg,\Omega_{X}^2)= \C \sigma$ such that $(X,\sigma)$ is a symplectic variety.
\end{definition}

Greb--Kebekus--Peternell introduced a notion of irreducible holomorphic-symplectic variety (more restrictive than ours) in \cite[Definition 8.16]{GKP16} which serves as one of the three building blocks in a decomposition theorem (due to Druel--Greb--Guenancia--Höring--Kebekus--Peternell, see introduction for references). Matsushita \cite[Definition 1.6]{Mat15} introduced the related notion of cohomologically irreducible symplectic varieties. The definition we use here appeared before in Schwald \cite[Definition 1]{Sch17} for projective varieties under the name irreducible symplectic. We chose to work with the above definition because it seems to be the most general framework that allows for a general moduli and deformation theory similar to the smooth case. We prefer however the name \emph{primitive} over irreducible symplectic for the lack of a decomposition theorem. This fits together with Menet's usage \cite[Definition 3.1]{Men20}.

\begin{Ex}\label{exhs}\

\begin{enumerate}
\item If $X$ is a primitive symplectic variety then so is:
\begin{itemize}
\item any contraction, that is, $X'$ for any proper bimeromorphic $f:X\to X'$ onto a normal K\"ahler space;
\item any quotient of $X$ by a finite group of symplectic automorphisms \cite[Proposition 2.4]{Bea00};
\item any small locally trivial deformation (see Corollary~\ref{cor deformation symplectic} below).
\end{itemize}
\item \label{egnikulin} By Nikulin \cite{NikulinAuto} any symplectic involution $\iota$ of a K3 surface $S$ has 8 fixed points.  The quotient $X$ of the Hilbert scheme $S^{[n]}$ of $n\geq 3$ points by $\iota$ has $\Q$-factorial terminal singularities by \cite[Proposition~5.15]{KM98} and Theorem~\ref{theorem basic symplectic}\eqref{lemma basic symplectic part two} below.  

For $n=2$, $X$ has $\binom{8}{2}=28$ isolated singularties and a K3 surface of transverse $A_1$ singularties, corresponding to the 28 fixed reduced subschemes and the closure of the locus of reduced orbits, respectively (see for example \cite[\S 6]{Camere}). It is therefore not terminal.  The $\Q$-factorial terminalization $Y$ is obtained by blowing up the K3 surface.  The second Betti number of $X$ is 15, and so the locally trivial deformation space of $X$ is 13-dimensional while $Y$ deforms in one dimension higher (see Theorem \ref{theorem deflt is smooth} below).  A complete projective family of this deformation type is produced in \cite{newnikulin}; see \cite{menetfu} for some other ``new" deformation types.
\item  There is a cubic fourfold $Z\subset\P^5$ with an order 11 automorphism (see for example \cite{MonAuto}).  Its Fano variety of lines $F$ has a symplectic automorphism $\sigma$ with isolated fixed points, and the quotient $X=F/\sigma$ is a $\Q$-factorial terminal primitive symplectic variety with $b_2=3$.  It follows from \cite[Theorem 3.17 and Theorem 5.4]{Men20} that the only deformation of $X$ is the twistor deformation.
\item \label{egsingmod} Let $S$ be a projective K3 surface, and $v\in H^*(S,\Z)$ an algebraic Mukai vector with $v^2>0$.  Then for $k\geq 1$, the moduli space $X=M(kv)$ of stable sheaves of Mukai vector $kv$ with respect to a generic polarization is a primitive symplectic variety.  Moreover, $X$ is always locally factorial and terminal \cite[Theorem~A]{KLSsing} unless $k=2$ and $v^2=2$ (in which case $X$ admits a resolution by an irreducible symplectic manifold---the O'Grady 10-fold \cite{OG99}). The singularities of $M(kv)$ can be non-quotient singularities, as the completions of the local rings are often not (even analytically) $\Q$-factorial---see \cite[Remark~6.3]{KLSsing}. This is because analytically locally or \'etale locally, these examples admit small crepant resolutions (but not globally).
\item \label{egsacca} Forthcoming work of Sacc\`a \cite{saccacpt} shows using MMP techniques that a (projective) Lagrangian fibration which extends in codimension 2 admits a compactification as a $\Q$-factorial terminal symplectic variety.  This for example applies to show that if $f:X\to B$ is a Lagrangian fibration of a smooth (projective) irreducible symplectic variety which is smooth over $U\subset B$, then any fibration isogenous to $f^{-1}(U)\to U$ admits such a compactification.
\item \label{egcubic} For a possibly singular cubic fourfold $Y \subset  \P^5$ not containing a plane, it was shown in \cite[Theorem 3.3]{Leh18} that the variety $M_1(Y)$ of lines on $Y$ is a symplectic variety birational to the second punctual Hilbert scheme of an associated K3 surface. Hence, $M_1(Y)$ admits a crepant resolution by an irreducible symplectic manifold, see \cite[Corollary 5.6]{Leh18}. A similar statement is deduced for the target space $Z(Y)$ of the MRC-fibration of the Hilbert scheme compactification of the space of twisted cubics on $Y$, see Theorem~1.1, Corollary~5.5,  and Corollary~6.2 of \cite{Leh18}.
\end{enumerate}
\end{Ex}

Note that even for smooth $X$ the notion of a primitive symplectic variety is a priori more general than that of an irreducible symplectic manifold. However, we do not know if there are smooth primitive symplectic varieties which are not irreducible symplectic manifolds. By Lemma \ref{lemma isv is ism} below such a variety must have dimension $\geq 6$.

\begin{lemma}\label{lemma isv is ism}
Let $X$ be a smooth primitive symplectic variety of dimension $\leq 4$. Then $X$ is an irreducible symplectic manifold (in the classical sense).
\end{lemma}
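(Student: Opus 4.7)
The plan is to combine the Beauville--Bogomolov decomposition theorem with a short Euler characteristic count. First, since $X$ is smooth and the symplectic form $\sigma\in H^0(X,\Omega^2_X)$ is everywhere non-degenerate, a suitable power of $\sigma$ trivializes $\omega_X$, so $c_1(X)=0$. Moreover, contraction with $\sigma$ identifies $T_X$ with $\Omega^1_X$, yielding Lefschetz-type isomorphisms $\Omega^p_X\cong \Omega^{\dim X-p}_X$ for every $p$. In particular $h^{p,0}(X)=h^{\dim X-p,0}(X)$, which together with $h^{0,1}(X)=0$ and $h^{0,2}(X)=1$ (both from the primitive symplectic hypothesis) and Hodge symmetry determines every $h^{p,0}(X)$ and gives $\chi(\sO_X)=2$ when $\dim X=2$ and $\chi(\sO_X)=3$ when $\dim X=4$.

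Next I would invoke the Beauville--Bogomolov decomposition theorem to obtain a finite \'etale cover $\pi\colon \tilde X\to X$ of some degree $d\geq 1$ with $\tilde X\cong T\times \prod_i Y_i\times \prod_j Z_j$, where $T$ is a complex torus, each $Y_i$ is a strict Calabi--Yau manifold of dimension $n_i$, and each $Z_j$ is an irreducible symplectic manifold of dimension $2m_j$. Finite \'etaleness gives $\chi(\sO_{\tilde X})=d\cdot \chi(\sO_X)$, while K\"unneth gives $\chi(\sO_{\tilde X})=\chi(\sO_T)\cdot \prod_i \chi(\sO_{Y_i})\cdot \prod_j \chi(\sO_{Z_j})$. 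Standard Hodge computations supply $\chi(\sO_{T^k})=0$ for $k\geq 1$, $\chi(\sO_{Y_i})=1+(-1)^{n_i}$, and $\chi(\sO_{Z_j})=m_j+1$.

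Finally I would enumerate the possible shapes of $\tilde X$ with $\dim \tilde X\in\{2,4\}$ and match against $\chi(\sO_{\tilde X})=d\cdot \chi(\sO_X)$. For $\dim X=2$ the options are $T^2$ ($\chi=0$) or $\mathrm{K3}$ ($\chi=2$); the first is incompatible with $2d\geq 2$, and the second forces $d=1$, so $X=\tilde X$ is a K3 surface. For $\dim X=4$ the options are $T^4$, $T^2\times\mathrm{K3}$, $T^1\times\mathrm{CY}_3$, $\mathrm{CY}_4$, $\mathrm{K3}\times\mathrm{K3}$, and $\mathrm{IHS}_4$, with Euler characteristics $0,0,0,2,4,3$; only the last satisfies $3d=\chi$ for a positive integer $d$, namely $d=1$, and so $\tilde X=X$ is an irreducible symplectic fourfold. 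In both cases the Beauville factor $\tilde X$ is simply connected, hence so is $X$, and $X$ is irreducible symplectic in the classical sense. The step I expect to be most delicate is the vanishing $h^{3,0}(X)=0$ in dimension four, which I would deduce from the Lefschetz-type isomorphism $\sigma\wedge\colon \Omega^1_X\xrightarrow{\sim}\Omega^3_X$ coming from non-degeneracy of $\sigma$; without this input the Euler characteristic count would not close up.
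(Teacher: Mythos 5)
Your proof is correct and follows essentially the same route as the paper's: the Beauville--Bogomolov decomposition combined with the Euler characteristic count $\chi(\sO_X)=3$ in dimension four, forcing the cover to be a single irreducible symplectic factor of degree $d=1$. The only cosmetic differences are that the paper rules out the strict Calabi--Yau factor directly by pulling back the symplectic form rather than by enumeration, and obtains $h^{3,0}(X)=0$ via Serre duality ($H^1(X,\sO_X)=0$ and $\omega_X\cong\sO_X$) instead of the contraction isomorphism $\Omega^1_X\cong\Omega^3_X$.
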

\begin{proof}
For $\dim X=2$ this is well known, so let us assume $\dim X=4$. 

If $X$ is a smooth primitive symplectic variety in our sense, the Beauville-Bogomolov decomposition theorem yields that a finite topological cover $\wt X \to X$ of $X$ splits as a product $\wt X \isom H \times C \times T$ where $H$ is a product of irreducible symplectic manifolds, $C$ a product of strict Calabi-Yau varieties, and $T$ a complex torus. From the existence of a symplectic form on $\wt X$ (by pullback from $X$) we deduce that the factor $C$ is trivial. 

By assumption, $H^1(\sO_X)=0$ and thus $H^3(\sO_X)=0$ by Serre duality.  Moreover, by the unicity of the symplectic form we in fact have $\chi(\sO_X)=3$. If there is a torus factor, then $\chi(\sO_{\wt X})=0$ contradicting $\chi(\sO_{\wt X})=d \chi(\sO_{X})$ where $d$ is the degree of the cover, so the factor $T$ is trivial. If $\wt X$ is a product of K3 surfaces, then $\chi(\sO_{\wt X})=4$, which is impossible.  Thus, $\wt X$ is irreducible symplectic, so that $d=1$, and thus $X$ is irreducible symplectic as well.
\end{proof}

It is unclear whether the statement of Lemma \ref{lemma isv is ism} holds in higher dimensions. It is worthwhile noting that there is a singular example of a primitive symplectic variety due to Matsushita \cite{Mat01}, see also \cite[Lemma 15]{Saw14} and \cite[Example 29]{Sch17}, which has the right cohomological invariants but is a torus quotient. Schwald's account nicely illustrates how the geometry of primitive symplectic varieties may deviate from the one of irreducible symplectic manifolds.

We collect the following basic results about symplectic varieties which are due to work of Beauville, Kaledin, and Namikawa; we give precise references in the proof.

\begin{theorem}[Beauville, Kaledin, and Namikawa]\label{theorem basic symplectic} \hspace{1in}

\begin{enumerate}
\item A normal variety is symplectic if and only if it has only rational Gorenstein singularities and its smooth part admits a holomorphic symplectic form. In particular, a symplectic variety has rational singularities.
\item Let $X$ be a symplectic variety and consider the stratification $$X=X_0 \supset X_1 \supset \ldots$$ where $X_{i+1}$ is the singular part of $X_i$ endowed with the reduced structure. Then the normalization of every irreducible component of $X_i$ is a symplectic variety. In particular, the singular locus of a symplectic variety has even codimension.
	\item \label{lemma basic symplectic part two} A symplectic variety $X$ has terminal singularities if and only if $\codim_X X^\sing \geq~4$.
\end{enumerate}
\end{theorem}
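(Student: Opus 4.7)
The plan is to reduce each of the three parts to standard results of Beauville, Kaledin, and Namikawa by recalling the structural input and giving short arguments with citations.

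For part (1), the strategy is to use the top exterior power of $\sigma$. If $(X,\sigma)$ is symplectic with $2n=\dim X$, then $\sigma^{\wedge n}$ trivializes $\omega_{X^\reg}$. Normality together with $\omega_X = j_\ast \omega_{X^\reg}$ (where $j:X^\reg\into X$) yields that $\omega_X$ is invertible, so $X$ is Gorenstein. Picking a resolution $\pi:Y\to X$ to which $\sigma$ (hence $\sigma^{\wedge n}$) extends holomorphically, the pullback of a local generator of $\omega_X$ remains a regular form on $Y$; this says all discrepancies are $\geq 0$, so $X$ has canonical singularities. By Elkik's theorem, canonical Gorenstein singularities are rational. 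Conversely, given rational Gorenstein with a symplectic form on $X^\reg$, the Kebekus--Schnell extension theorem (Corollary 1.8 of \cite{KS18}, already invoked in the excerpt) provides holomorphic extension of reflexive forms along \emph{any} resolution. Thus $\sigma$ extends to $Y$, so $(X,\sigma)$ is symplectic in the sense of Beauville. The rationality statement is a byproduct of the forward implication.

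For part (2), I would invoke Kaledin's description of the singularities of a symplectic variety from his Poisson-geometric viewpoint. The symplectic form $\sigma$ on $X^\reg$ dualizes to a Poisson bracket on $\sO_X$, and $X$ is stratified by its symplectic leaves. Kaledin shows that the closures of the codimension-$\geq 1$ symplectic leaves give precisely the strata $X_i$ of the singular stratification, and that the normalization of each irreducible component of $X_i$ inherits a (generically nondegenerate, closed) symplectic form on its regular locus which extends to some resolution (since a transverse slice to a symplectic leaf is again a symplectic singularity). In particular each stratum is of even complex codimension, because symplectic leaves of a holomorphic Poisson structure have even dimension.

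For part (3), I combine part (2) with Reid's criterion. A symplectic variety is canonical Gorenstein by part (1), and for canonical Gorenstein singularities Reid's criterion (see e.g.\ \cite[Corollary 5.18]{KM98}) says terminal is equivalent to $\codim_X X^\sing \geq 3$. By part (2), $\codim_X X^\sing$ is even, hence $\geq 3$ is equivalent to $\geq 4$.

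The main obstacle, if any, is making the forward direction of part (1) and part (2) airtight: one must carefully check that the Gorenstein property for $X$ follows from the existence of $\sigma^{\wedge n}$ (rather than merely the $\Q$-Gorenstein property) and that Kaledin's stratification by symplectic leaves coincides schematically with the iterated singular stratification. Both are technical but are precisely what Namikawa and Kaledin prove in their cited works, so the proof reduces to assembling these references.
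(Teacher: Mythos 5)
Your treatment of parts (1) and (2) follows essentially the same route as the paper. For (1), the forward direction is Beauville's argument from \cite[Proposition 1.3]{Bea00} ($\sigma^{\wedge n}$ trivializes $\omega_{X^\reg}$, so $X$ is Gorenstein; the extension of $\sigma$ to a resolution makes the discrepancies nonnegative; Elkik then gives rationality), and the converse is exactly the Kebekus--Schnell extension theorem \cite[Corollary 1.8]{KS18}, as in the paper. For (2) both you and the paper reduce to Kaledin; the paper additionally records that $X_{i+1}=(X_i^\sing)_\red$ is precisely how Kaledin's stratification is constructed, that Artin approximation upgrades his formal product decomposition to analytic germs, and why his arguments carry over to the analytic category --- the technical points you flag are indeed handled there.

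The genuine gap is in part (3). The implication ``terminal $\Rightarrow \codim_X X^\sing\geq 3$'' is a theorem (and together with (2) gives $\geq 4$), but the converse of what you call Reid's criterion is false: a canonical Gorenstein singularity that is smooth in codimension $2$ need not be terminal. For instance, the affine cone over a smooth cubic surface $S\subset\P^3$ is an isolated Gorenstein canonical threefold singularity (so $\codim_X X^\sing=3$) whose vertex blowup has exceptional divisor of discrepancy $0$ (since $-K_S=\sO_S(1)$), hence it is not terminal; \cite[Corollary 5.18]{KM98} asserts only the one implication. So the direction ``$\codim_X X^\sing\geq 4\Rightarrow$ terminal'' cannot be obtained from a general criterion for canonical Gorenstein singularities and genuinely requires the symplectic structure. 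The paper argues as follows: if $E$ is a divisor of discrepancy $0$ on a resolution $\pi:Y\to X$, then $\Sigma:=\pi(E)\subset X^\sing$ has dimension $\leq\dim X-4$, and the two-form on a resolution $E'$ of $E$ computed as $\psi^*\pi^*\sigma$ has one-dimensional radical (generic rank $\dim X-2$), yet it coincides with the pullback ${\pi'}^*\vphi^*\sigma$ from a resolution of $\Sigma$, whose rank is at most $\dim\Sigma\leq\dim X-4$ --- a contradiction. This uses the Kebekus--Schnell functorial pullback of reflexive forms \cite[Theorem 14.1]{KS18} and Kaledin's result that the strata are generically symplectic. You need to replace your appeal to an equivalence with an argument of this kind (or cite Namikawa's \cite[Corollary 1]{Nam01}, noting it is written for algebraic varieties).
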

\begin{proof} 
At least for algebraic varieties, this result is well-known. We give a sketch of the argument and comment on why the arguments hold in the analytic context as well.
\begin{enumerate}
	\item The \emph{only if} direction is proven in \cite[Proposition 1.3]{Bea00} and is valid in the analytic context as well. The converse follows from \cite[Corollary~1.7]{KS18}.
	\item The existence of the stratification is \cite[Theorem 2.3]{Kal06b}. It is not claimed there that $X_{i+1}=(X_{i}^\sing)_\red$, however, that is how the stratification is constructed, see \cite[Proposition 3.1]{Kal06b}. The decomposition a priori only holds on the formal level by Kaledin's result, however by \cite[Corollary (1.6)]{Art68} a formal isomorphism implies the existence of an isomorphism of analytic germs. 	
	We refer to Remark \ref{remark kaledin} for why Kaledin's results also apply in the analytic situation. 
\item For algebraic varieties, this statement is \cite[Corollary 1]{Nam01}. The proof is a bit involved so we take the opportunity to use Kebekus-Schnell's functorial pullback of reflexive differential forms and Kaledin's decomposition theorem to write down a simple proof that also works in the analytic setting. We do not claim originality, the argument expands on an observation by Namikawa (see the footnote on page 1 and \S 1 of \cite{Nam01}).

By \cite[Theorem 2.3]{Kal06b}, the codimension of the singular locus is even, and if $x \in X^\sing$ is a general point of an irreducible component of $X^\sing$ of codimension $2$, the germ $(X,x)$ is isomorphic to the product of a smooth germ and the germ of rational double point. Such a product however does not have terminal singularities. If $\codim X^\sing \geq 4$, we take a resolution $\pi:Y\to X$ and assume that $E\subset Y$ is a divisor with vanishing discrepancy. Then $Y$ is symplectic at the generic point of $E$ and $\pi(E) \subset X^\sing$. Let us consider a diagram
\[
\xymatrix{
E' \ar[d]_{\pi'}\ar[r]\ar@/_1pc/[rr]_\psi & E \ar@{^(->}[r] & Y \ar[d]^\pi \\
\Sigma \ar[rr]_\vphi && X\\
}
\]
where $\Sigma$ is a resolution of $\pi(E)$ and $E' \to E$ is a resolution. Then by \cite[Theorem~14.1]{KS18} one can pullback the symplectic form along $\vphi$ such that ${\pi'}^*\vphi^*\sigma = \psi^*\pi^*\sigma$. The pullback $\psi^*\pi^*\sigma$ has one-dimensional radical at the general point of $E'$ and $\vphi^*\sigma$ is generically symplectic by Kaledin's result. This is a contradiction to $\dim \Sigma \leq \dim X - 4$.
\end{enumerate}
\end{proof}

As a direct consequence of Theorem~\ref{theorem basic symplectic} and Lemma \ref{lemma peternell} we infer

\begin{corollary}\label{corollary symplectic hodge}
Let $X$ be a compact symplectic variety. Then the Hodge structure on $H^2(X,\Z)_\tf$ is pure.\qed
\end{corollary}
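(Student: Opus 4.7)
The statement is flagged as a direct consequence of the two results cited just before it, and the plan is simply to carry out that derivation. First, by Theorem~\ref{theorem basic symplectic}(1), the symplectic variety $X$ has rational (in fact Gorenstein) singularities, which supplies the rational-singularities hypothesis of Lemma~\ref{lemma peternell}. Next, since $X$ is compact K\"ahler (or at least of Fujiki class $\scrC$, as is implicit in the framework in which this corollary is used), Proposition~\ref{proposition kaehler space}(3) furnishes a resolution $\pi\colon Y\to X$ with $Y$ a compact K\"ahler manifold; in particular $Y$ belongs to class $\scrC$, which is the remaining hypothesis of the lemma.

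With both hypotheses in place, Lemma~\ref{lemma peternell} directly yields a pure Hodge structure on $H^2(X,\Z)$: the pullback $\pi^\ast\colon H^2(X,\Z)\hookrightarrow H^2(Y,\Z)$ is injective by the exact sequence \eqref{eq leray}, and its image is a sub-Hodge structure of the pure weight-two Hodge structure on $H^2(Y,\Z)$. To finish, I would pass to torsion-free quotients: taking $(-)_\tf$ preserves the Hodge sub-structure relation, so $H^2(X,\Z)_\tf$ embeds into the pure weight-two Hodge structure $H^2(Y,\Z)_\tf$ as a sub-Hodge structure and is therefore itself pure of weight two.

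I do not expect any genuine obstacle: the corollary is essentially the last assertion of Lemma~\ref{lemma peternell} after modding out torsion, and the only thing to verify beyond quoting the lemma is the existence of a class-$\scrC$ resolution of $X$, which is automatic from the K\"ahler/Fujiki hypothesis via Proposition~\ref{proposition kaehler space}(3).
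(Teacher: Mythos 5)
Your proof is correct and follows exactly the paper's intended derivation: rational singularities from Theorem~\ref{theorem basic symplectic}, a class-$\scrC$ resolution, and then the purity assertion of Lemma~\ref{lemma peternell}, with the passage to $H^2(X,\Z)_\tf$ being immediate. You also rightly note that the Fujiki class $\scrC$ (or K\"ahler) hypothesis needed to invoke the lemma is implicit in the paper's statement of the corollary rather than explicit, which is a fair observation but not a gap in your argument.
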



\begin{remark}\label{remark kaledin}
Kaledin's article \cite{Kal06b} is formulated for complex algebraic varieties, but his results are used in Theorem~\ref{theorem basic symplectic} for arbitrary  symplectic varieties.
Let us comment on why they carry over to the analytic setting. The crucial ingredient from \emph{algebraic} geometry in Kaledin's proofs is the use of functorial mixed Hodge structures on cohomology groups of complex projective algebraic varieties and there is no such structure on the cohomology of arbitrary complex varieties. However, Kaledin only uses it for fibers of resolutions of singularities which, also in the analytic category, can be chosen projective. Actually, these fibers are always compact complex varieties of Fujiki class $\scrC$, which is sufficient.

With this in mind, Kaledin's proofs work almost literally for analytic varieties. More precisely, one first shows using mixed Hodge structures that Kaledin's proofs yield analogs of \cite[Lemma 2.7]{Kal06b} and \cite[Lemma 2.9]{Kal06b} in the analytic setting. These are the key technical ingredients to prove the stratification and formal product decomposition \cite[Theorem 2.3]{Kal06b} as well as \cite[Theorem 2.5]{Kal06b} which relates the symplectic and Poisson structure. 
Other than mixed Hodge theory, Kaledin mainly uses Poisson structures, commutative algebra, or direct geometric arguments which all make sense also in our setting.
Finally, also semi-smallness (see \cite[Lemma~2.11]{Kal06b}) is a consequence of geometric properties of the symplectic form and \cite[Lemma~2.9]{Kal06b}.
\end{remark}

\section{Deformation theory}\label{section defo}

\begin{definition}\label{definition deformation}
A deformation of a compact complex space $Z$ is a flat and proper morphism $\scrZ \to S$ of complex spaces together with a distinguished point $0\in S$ and an isomorphism of the fiber of $\scrZ\to S$ over $0$ with $Z$. A deformation $\pi:\scrZ \to S$ is called \emph{locally trivial} at $0\in S$ if for every $p \in Z=\pi^{-1}(0)$ there exist open neighborhoods $\scrU \subset \scrZ$ of $p$ and $S_0 \subset S$ of $0$ such that $\scrU \isom U \times S_0$ over $S_0$ where $U=\scrU \cap Z$. The deformation is called locally trivial if it is locally trivial at each point of $S$. We speak of a \emph{locally trivial family} or \emph{locally trivial morphism} $\pi:\scrZ \to S$ if we do not specify $0\in S$ and the fiber over it.
\end{definition}
For most properties and statements we should rather speak about the morphism of space germs $(\scrZ,Z) \to (S,0)$. All deformation theoretic statements have to be interpreted as statements about germs. Considering deformations and locally trivial deformations gives rise to two deformation functors; in fact, the functor $\sD_Z^\lt$ of locally trivial deformations of $Z$ is a subfunctor in the functor $\sD_Z$ of all deformations of $Z$. They have tangent spaces $T_{\sD_Z^\lt}= H^1(Z,T_Z)$ and if $Z$ is reduced $T_{\sD_Z}=\Ext^1(\Omega_Z,\sO_Z)$, respectively. Note that $H^1(Z,T_Z)$ is a subset of $\Ext^1(\Omega_Z,\sO_Z)$ by the local-to-global spectral sequence for Ext. 
We refer to \cite[Proposition~1.2.9]{Ser06} (which actually works for arbitrary schemes) respectively \cite[Theorem~2.4.1(iv)]{Ser06}. Even though Sernesi's book treats deformations of algebraic schemes, the arguments apply literally for deformations of complex spaces, mainly because zero-dimensional complex spaces are nothing else but zero-dimensional $\C$-schemes of finite type.

\subsection{Versality and Universality}\label{section versality} Recall that a deformation $(\scrZ,Z) \to (S,0)$ is called \emph{versal} if for every deformation $(\scrZ',Z)\to (S',0)$ of $Z$ there is a map $\vphi:(S',0) \to (S,0)$ of (germs of) complex spaces such that $\scrZ\times_S S' \isom \scrZ'$. It is called \emph{miniversal} if moreover the differential $T_{\vphi,0}:T_{S',0} \to T_{S,0}$ is uniquely determined. The deformation is called \emph{universal} if furthermore the map $\vphi$ is unique. Clearly, every universal deformation is miniversal and every miniversal deformation is versal. The different notions of versality are defined analogously for other deformation problems such as locally trivial deformations. 

\subsection{Existence of a miniversal deformation}\label{section existence} Miniversal\footnote{Note that Grauert uses the term complete (resp. versal) for what we call versal (resp. miniversal). Nowadays, our terminology seems to be more common; some authors use semi-universal instead of miniversal.} deformations exist by \cite[Hauptsatz, p 140]{Gra74}, see also \cite[Th\'eor\`eme principal, p 598]{Dou74}. More precisely, it is shown in loc. cit. that there exist miniversal deformations $\scrZ \to S$ of a given compact complex space $Z$ which are versal in every point of $S$.
We will frequently write $S=\Def(Z)$. The family $\scrZ\to \Def(Z)$ is called the \emph{Kuranishi family} and $\Def(Z)$ is called \emph{Kuranishi space}. 

If $Z$ is a complex space satisfying $H^0(Z,T_Z)=0$, then every miniversal deformation is universal. 

\subsection{Locally trivial miniversal deformations}\label{section locally trivial}
Recall from \cite[(0.3) Corollary]{FK87} that for a miniversal deformation $\scrZ \to \Def(Z)$ of a compact complex space $Z$ there exists a closed complex subspace $\Def^\lt(Z)\subset \Def(Z)$ of the Kuranishi space parametrizing locally trivial deformations of $Z$. More precisely, the restriction of the miniversal family to this subspace, which by abuse of notation we denote also by $\scrZ \to \Def^\lt(Z)$, is a locally trivial deformation of $Z$ and is miniversal for locally trivial deformations of $Z$.
When speaking about locally trivial deformations we will usually use the terms versal, miniversal, universal with respect to the functor of locally trivial deformations. 

\begin{lemma}\label{lemma tx}
Let $S$ be a complex space and let $f:\sX \to S$ be a locally trivial family whose fiber $X$ above a point $0\in S$ is a primitive symplectic variety. Denote by $j:\sU \to \sX$ the inclusion of the regular locus.  Then in a neighborhood of $0\in S$ we have:
\begin{enumerate}
\item $L:=(f\circ j)_*\Omega^2_{\sU/S}$ is an invertible sheaf and compatible with arbitrary base change.
\item The natural map
\begin{equation}
T_{\sX/S} \tensor f^*L \to j_*\Omega_{\sU/S}\label{eq tx two}
\end{equation}
is an isomorphism.
\end{enumerate}
\end{lemma}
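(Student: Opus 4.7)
The plan is to handle (1) via Grauert's cohomology-and-base-change theorem, and (2) by reducing to the smooth locus $\sU$ using reflexivity.

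For (1), I would first exploit the local triviality of $f$: on a sufficiently small Euclidean open $\sV\subset\sX$ admitting a trivialization $\sV\cong V\times S_0$ with $V=\sV\cap X$, we have
\[
  j_*\Omega^2_{\sU/S}\bigr|_{\sV}\;\cong\;p_1^*\Omega^{[2]}_V.
\]
In particular, $\sF:=j_*\Omega^2_{\sU/S}$ is a coherent, $S$-flat sheaf on $\sX$ whose fiber over $s$ is $\Omega^{[2]}_{X_s}$. By Corollary~\ref{cor deformation symplectic}, each nearby fiber $X_s$ is again primitive symplectic, so
\[
  h^0(X_s,\Omega^{[2]}_{X_s})\;=\;h^{2,0}(X_s)\;=\;1
\]
is constant in a neighborhood of $0$. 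Grauert's theorem applied to the proper flat morphism $f$ and the flat coherent sheaf $\sF$ then yields that $L=f_*\sF$ is locally free of rank one near $0$ and that its formation commutes with arbitrary base change.

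For (2), both sides of \eqref{eq tx two} are reflexive coherent $\sO_\sX$-modules: $T_{\sX/S}=\sHom(\Omega_{\sX/S},\sO_\sX)$ is reflexive and $f^*L$ is a line bundle by (1), while $j_*\Omega_{\sU/S}$ is reflexive because $\sX\setminus\sU$ has codimension at least two in $\sX$ (by Theorem~\ref{theorem basic symplectic}(2), applied fiberwise via the local trivialization). It therefore suffices to verify that the map is an isomorphism after restricting to $\sU$. Trivializing $L$ near $0$ by a section $\omega$, the restriction of the map to $\sU$ becomes the contraction
\[
  T_{\sU/S}\;\longrightarrow\;\Omega_{\sU/S},\qquad v\;\longmapsto\;\iota_v\omega.
\]
By the base change statement of (1), the restriction $\omega|_{X_s^{\reg}}$ is a nonzero element of the one-dimensional space $H^0(X_s^{\reg},\Omega^2_{X_s^{\reg}})=\C\,\sigma_s$, hence a nonzero scalar multiple of the symplectic form on $X_s$. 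Thus $\omega$ is fiberwise non-degenerate on $\sU$, and contraction is an isomorphism of locally free sheaves on $\sU$.

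The step I expect to require the most care is the identification $\sF|_{\sV}\cong p_1^*\Omega^{[2]}_V$ together with its compatibility with fiber restriction, since it requires that formation of the reflexive hull of relative differentials commutes with flat base change in our setup. I would handle this by noting that the complements of both $\sU\subset\sX$ and $X_s^{\reg}\subset X_s$ have codimension two, so that reflexive hulls on these normal spaces are determined by their restrictions to the corresponding smooth loci, where the local trivialization $\sV\cong V\times S_0$ applies directly.
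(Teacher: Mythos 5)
There is a genuine gap here, and it comes in two parts. First, your argument is circular: you invoke Corollary~\ref{cor deformation symplectic} to conclude that the nearby fibers $X_s$ are primitive symplectic and hence that $h^0(X_s,\Omega^{[2]}_{X_s})=1$ is constant, but the paper's proof of that corollary itself relies on Lemma~\ref{lemma tx} --- it is precisely how one shows that $(f\circ j)_*\Omega^2_{\sU/S}$ is invertible and compatible with base change, which is what produces the symplectic form on the nearby fibers in the first place. Upper semicontinuity alone only gives $h^0(X_s,\Omega^{[2]}_{X_s})\leq 1$; ruling out that the form dies on nearby fibers is part of what is being proved. (One could try to repair this by combining the topological constancy of $b_2$ with the upper semicontinuity of each of $h^{2,0}$, $h^{1,1}$, $h^{0,2}$ and the Hodge decomposition of $H^2$ of the nearby fibers, but that is a different argument from the one you wrote, and it still runs into the second problem.)

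Second, Grauert's theorem in the form you use it --- constancy of $s\mapsto h^0(X_s,\sF_s)$ implies $f_*\sF$ is locally free and compatible with base change --- is only valid over a \emph{reduced} base $S$. The lemma is stated for an arbitrary complex space $S$ and is applied in the proof of Theorem~\ref{theorem deflt is smooth} with $S=\Spec R$ for an Artinian local $\C$-algebra $R$, where there are no ``nearby fibers'' and no semicontinuity to speak of; this non-reduced case is the essential one for the $T^1$-lifting argument. It is exactly where the paper's proof does its real work: it reduces to the Artinian case by the theorem on formal functions and then invokes the Deligne-style degeneration and base-change argument of \cite[Lemma~2.4]{BL16} together with primitivity. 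Your treatment of part (2) --- reduction to $\sU$ by reflexivity and codimension two, where the map becomes contraction with a fiberwise nondegenerate form --- is essentially sound once (1) is in place, and is close in spirit to the paper's observation that the map is an isomorphism near $0$ because it is one over the special fiber.
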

\begin{proof}
By local triviality, the sheaves $j_*\Omega^p_{\sU/S},j_*T_{\sU/S},\Omega_{\sX/S},T_{\sX/S}$ are all flat over $S$ and compatible with arbitrary base change. As push forward is compatible with flat base change, invertibility of $L$ can be tested on the completion. By the theorem on formal functions we may reduce (1) to the case where $S$ is the spectrum of an artinian local $\C$-algebra of finite type. Then by the primitivity assumption on $X$ and \cite[Lemma~2.4]{BL16}, the sheaf $L$ is invertible and compatible with arbitrary base change in a neighborhood of $0$. As every section of $L$ determines a morphism $T_{\sU/S} \to \Omega_{\sU/S}$, we obtain a canonical morphism $j_*T_{\sU/S} \tensor f^*L \to j_*\Omega_{\sU/S}$ and \eqref{eq tx two} is just the composition with $T_{\sX/S} \to j_*T_{\sU/S}$ tensored with the pullback of $L$.  It then follows that \eqref{eq tx two} is an isomorphism in a neighborhood of $0$ because it is over the special fiber.
\end{proof}

\begin{lemma}\label{lemma existence universal deformation}
Let $X$ be a primitive symplectic variety. Then $H^0(X,T_X)=0$ and every miniversal deformation of $X$ is universal.
\end{lemma}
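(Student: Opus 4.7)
The plan is to derive both conclusions from the vanishing of $H^0(X,T_X)$, since Section~\ref{section existence} already records that for a compact complex space with no global vector fields every miniversal deformation is automatically universal. The remaining task is therefore to establish $H^0(X,T_X)=0$.

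First, I would use the symplectic form $\sigma$ to identify $T_X$ with $\Omega_X^{[1]}$. Applying Lemma~\ref{lemma tx} to the trivial family over a point, the sheaf $L = j_*\Omega^2_{X^\reg}$ is invertible and \eqref{eq tx two} becomes an isomorphism $T_X \otimes L \xrightarrow{\cong} j_*\Omega_{X^\reg} = \Omega_X^{[1]}$. Since $\sigma$ is a global section of $L$ that is nowhere vanishing on $X^\reg$ and the zero locus of a section of an invertible sheaf is a Cartier divisor, any zeros of $\sigma$ would be supported on $X^\sing$; by normality of $X$ this locus has codimension at least two, so the zero locus must be empty. Thus $\sigma$ trivializes $L$, yielding $T_X \cong \Omega_X^{[1]}$.

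It then remains to check $H^0(X,\Omega_X^{[1]})=0$. By Theorem~\ref{theorem basic symplectic}, $X$ has rational singularities, and being a compact Kähler variety it belongs to the Fujiki class $\scrC$ and admits a resolution by a Kähler manifold (Proposition~\ref{proposition kaehler space}); the remark following Definition~\ref{definition reflexive differentials} then identifies $H^0(X,\Omega_X^{[1]})$ with the Hodge graded piece $\mathrm{Gr}_F^1 H^1(X,\C)$, and Lemma~\ref{lemma peternell} ensures that the weight-one Hodge structure on $H^1(X,\Z)_\tf$ is pure. Complex conjugation then yields $\dim H^{1,0}(X) = \dim H^{0,1}(X) = \dim H^1(X,\sO_X) = 0$ using the primitivity hypothesis, which combined with the previous step gives $H^0(X,T_X)=0$. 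Since essentially every ingredient has already been set up in the preceding sections, I do not foresee any genuine obstacle; the only subtle points are the codimension argument ruling out zeros of $\sigma$ and the verification that the hypotheses of Lemma~\ref{lemma peternell} are met for primitive symplectic varieties.
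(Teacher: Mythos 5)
Your proof is correct and follows essentially the same route as the paper: identify $T_X$ with $\Omega_X^{[1]}$ via the symplectic form and Lemma~\ref{lemma tx}, then conclude $H^{1,0}=\overline{H^{0,1}}=\overline{H^1(X,\sO_X)}=0$ using purity and rational singularities, with universality following from the general fact recorded in Section~\ref{section existence}. One small inaccuracy: in Lemma~\ref{lemma tx} the sheaf $L=(f\circ j)_*\Omega^2_{\sU/S}$ lives on the base $S$ (a point here), so $f^*L$ is automatically trivial and the isomorphism $T_X\cong\Omega_X^{[1]}$ is immediate from \eqref{eq tx two}; your zero-locus argument is unnecessary (and as stated conflates $L$ with $\Omega_X^{[2]}$), though harmless.
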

\begin{proof}
Let $\pi:Y \to X$ be a resolution of singularities by a Kähler manifold and denote by $j:U \into X$ the inclusion of the regular part. Then 
$T_X \isom \pi_*\Omega_Y$ by Lemma~\ref{lemma tx} and \cite[Corollary~1.8]{KS18}. Consequently, 
$$H^0(X,T_X) =H^0(Y,\Omega_Y) \isom H^{1,0}(Y)$$
by the Dolbeault isomorphism and the complex conjugate of the latter is 
$$H^{0,1}(Y)\isom H^1(Y,\sO_Y) = H^1(X,\sO_X)$$
again by Dolbeault and by rationality of singularities.  We conclude the proof with the observation that $H^1(X,\sO_X)=0$ by definition of a primitive symplectic variety.
\end{proof}


The proof of the following result is similar to the proof of \cite[Theorem 4.1]{BL16}. For lack of a crepant resolution, some minor changes are necessary which is why we include a proof.

\begin{theorem}\label{theorem deflt is smooth}
Let $X$ be a primitive symplectic variety. Then the space $\Def^\lt(X)$ of locally trivial deformations of $X$ is smooth of dimension $h^{1,1}(X)$. 
\end{theorem}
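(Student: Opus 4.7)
The plan is to apply the $T^1$-lifting criterion (Kawamata--Ran, Fantechi--Manetti) to the locally trivial deformation functor $\sD_X^\lt$. It suffices to show that for each $n \geq 0$ and every locally trivial family $f_n : \sX_n \to S_n := \Spec \C[t]/(t^{n+1})$ with central fibre $X$, the cohomology $H^1(\sX_n, T_{\sX_n/S_n})$ is a free $A_n := \C[t]/(t^{n+1})$-module. Smoothness of $\Def^\lt(X)$ and the dimension formula $\dim_\C H^1(X, T_X) = h^{1,1}(X)$ will follow together.

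The dimension count uses Lemma~\ref{lemma tx} (in the absolute case) together with the triviality $\Omega_X^{[2]} \cong \sO_X$ provided by the symplectic form $\sigma$: this gives $T_X \cong \Omega_X^{[1]}$, and the Hodge-theoretic identification $H^q(X, \Omega_X^{[p]}) = H^{p,q}(X, \C)$ for $p+q \leq 2$ (recalled at the end of Section~\ref{section prelim}, using rational singularities and class $\scrC$) yields $H^1(X, T_X) = H^{1,1}(X,\C)$, of $\C$-dimension $h^{1,1}(X)$.

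For freeness, Lemma~\ref{lemma tx} applied to the family $f_n$ provides an invertible sheaf $L_n$ on $S_n$---necessarily trivial, since $S_n$ is local Artinian---and, after choosing a trivialization of $L_n$, an isomorphism $T_{\sX_n/S_n} \cong \Omega^{[1]}_{\sX_n/S_n}$. By cohomology and base change, freeness of $H^1(\sX_n, \Omega^{[1]}_{\sX_n/S_n})$ over $A_n$ is equivalent to its $\C$-length being $(n+1) h^{1,1}(X)$. The plan is to obtain this from a relative Hodge-theoretic decomposition in degree two: since $f_n$ is locally trivial, the underlying topological space of $\sX_n$ is $X$ and $H^2(\sX_n, \C) = H^2(X, \C)$, so the hypercohomology of the truncated reflexive de Rham complex $\Omega^{[\bullet \leq 2]}_{\sX_n/S_n}$ has total $A_n$-length $(n+1) b_2(X)$. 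Kebekus--Schnell \cite{KS18}, combined with the preservation of rational singularities under locally trivial deformations, identifies the associated Hodge graded pieces with $H^q(\sX_n, \Omega^{[p]}_{\sX_n/S_n})$ in the range $p+q \leq 2$; together with the degeneration of the Hodge-to-de Rham spectral sequence---which persists from the central fibre by the strictness of the Hodge filtration on class $\scrC$ varieties with rational singularities---this yields the desired length for the $(1,1)$-piece.

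The main obstacle, and the reason for divergence from \cite[Theorem 4.1]{BL16}, is the absence of a simultaneous crepant resolution $\tilde{\sX}_n \to \sX_n$. In the crepant-resolvable setting one could pass to a smooth proper family $\tilde{\sX}_n \to S_n$, invoke Deligne's theorem to obtain freeness of $H^1(\tilde{\sX}_n, \Omega^1_{\tilde{\sX}_n/S_n})$, and use vanishing of $R^i\tilde\pi_* \Omega^1$ for $i>0$ to descend to $\sX_n$. Here one must argue intrinsically on $\sX_n$ using the reflexive de Rham complex and the Kebekus--Schnell extension theorem, exploiting topological rigidity of locally trivial deformations to make the cohomological dimension count independent of a global smoothing.
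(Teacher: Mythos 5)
Your overall strategy coincides with the paper's: both proofs run the $T^1$-lifting criterion and use Lemma~\ref{lemma tx} to replace $T_{\sX_n/S_n}$ by the sheaf of relative reflexive one-forms, reducing everything to the statement that $H^1(\sX_n,\Omega^{[1]}_{\sX_n/S_n})$ is a free $A_n$-module compatible with base change; the identification of the tangent space with $H^{1,1}(X)$ is also the same. The paper simply quotes this freeness from \cite[Lemma~2.4]{BL16}, whereas you attempt to prove it inline, and that is where your argument has a genuine gap.

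Concretely, the assertion that the truncated relative reflexive de Rham complex $\Omega^{[\bullet\le 2]}_{\sX_n/S_n}$ has degree-two hypercohomology of total $A_n$-length $(n+1)\,b_2(X)$ does not follow from ``the underlying topological space of $\sX_n$ is $X$'': on a singular space the reflexive de Rham complex does not compute singular cohomology, and even the absolute comparison $\mathrm{gr}^p_F H^{p+q}(X,\C)\cong H^q(X,\Omega^{[p]}_X)$ for $p+q\le 2$ (\cite[Corollary~2.3]{BL16}) is obtained by pushing forward from a resolution via Kebekus--Schnell, not intrinsically. Likewise, ``degeneration of Hodge-to-de Rham persists from the central fibre by strictness'' inverts the logic of Deligne's argument: in \cite[Th\'eor\`eme~5.5]{Del68} degeneration over the Artinian base and freeness are deduced \emph{simultaneously} from the length inequality together with the known total length, and that total length is exactly the point at issue. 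Finally, the obstacle you name --- the absence of a simultaneous \emph{crepant} resolution --- is misplaced: all that is needed is a simultaneous resolution, and these always exist for locally trivial families by the functoriality of Bierstone--Milman (Lemma~\ref{lemma simultaneous resolution}); this is how the relative comparison underlying \cite[Lemma~2.4]{BL16} is actually carried out. With that input your length count closes, and the remainder of your argument agrees with the paper's.
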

\begin{proof}
Smoothness is deduced using Kawamata--Ran's $T^1$-lifting principle \cite{Ran92, Kaw92, Kaw97a}, see also \cite[\S 14]{GHJ03}, \cite{Leh16}, \cite[VI.3.6]{Leh11} for more details. We have to show the following. Let $\sX \to S$ be a locally trivial deformation of $X$ where $S=\Spec R$ for some Artinian local $\C$-algebra $R$ with residue field $\C$, let $S' \subset S$ be a closed subscheme, and let $$\sX':=\sX \times_S S' \to S'$$ be the induced deformation. Then we need to prove that the canonical morphism $$H^1(T_{\sX/S})\to H^1(T_{\sX'/S'})$$ is surjective.

Let $j:\sU \into \sX$ the inclusion of the regular part. By Lemma \ref{lemma tx}, it suffices to show that $H^1(j_*\Omega_{\sU/S}) \to H^1(j'_*\Omega_{\sU'/S'})$ is surjective where $j:\sU'=\sU\times_S S'\to\sX'$ is the regular part of $\sX'\to S'$. However, by \cite[Lemma 2.4]{BL16} the $R$-module $H^1(j_*\Omega_{\sU/S})$ is locally free and compatible with arbitrary base change. In other words, $$H^1(j_*\Omega_{\sU'/S'}) = H^1(j_*\Omega_{\sU/S}) \tensor_R R',$$ where $S'=\Spec R'$ and the map is clearly surjective. Thus, it follows from the $T^1$-lifting criterion that the space $\Def^\lt(X)$ is smooth.

Recall that the tangent space to $\Def^\lt(X)$ at the origin is $H^1(T_X)\isom H^1(j_*\Omega_U)$, which by \cite[Corollary 2.3]{BL16} has dimension $h^{1,1}(X)$. By the smoothness assertion we proved before the dimension of the tangent space is the dimension of $\Def^\lt(X)$.
\end{proof}

As an application, we deduce the existence of a simultaneous resolution.
\begin{definition}\label{definition simultaneous resolution}
Let $\sX \to S$ be a flat morphism between complex spaces with reduced and connected fibers. A \emph{simultaneous resolution} of $\sX \to S$ is a proper bimeromorphic $S$-morphism $\pi:\sY \to \sX$ such that $\sY \to S$ is smooth. A simultaneous resolution is called \emph{strong} if moreover $\pi$ is an isomorphism over the complement of the singular locus of $\sX \to S$.
\end{definition}
It follows from the definition that for every $s\in S$ the fiber $\sY_s \to \sX_s$ is a resolution of singularities. It is well known that simultaneous resolutions do not always exist. For example, let $f:\sX \to S$ be a family of elliptic curves where $\sX$ is smooth and $S$ is a smooth curve. Suppose that there is a point $0\in S$ such that $f$ is smooth over $S\ohnenull$ and $\sX_0=f^{-1}(0)$ is a reduced nodal rational curve. If there were a simultaneous resolution $\pi:\sY \to \sX$, the exceptional set of $\pi$ would be a divisor $E\subset \sY$. Then $\pi(E) \subset \sX$ would be a finite set which contradicts smoothness of $\sY \to S$ because this map would have some reducible fibers.
\begin{lemma}\label{lemma simultaneous resolution}
Let $\sX \to S$ be a locally trivial deformation of a reduced compact complex space $X$ over a reduced complex space $S$ and let $\sU \to S$ be the regular part of $\sX \to S$. Then there exists a simultaneous resolution $\pi:\sY \to \sX$ of $\sX$ which is obtained by successive blowing ups along centers which are smooth over $S$. Moreover, $\pi$ can be chosen to be an isomorphism over $\sU$.
\end{lemma}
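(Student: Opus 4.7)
The plan is to invoke a functorial (canonical) resolution of singularities for reduced complex analytic spaces and extend it along the family using local triviality. Hironaka's resolution, in the form developed by Bierstone--Milman, Villamayor, and W\l odarczyk, produces for any reduced complex analytic space $Z$ a canonical finite sequence of blow-ups along smooth centers $C_0\subset Z$, $C_1\subset\operatorname{Bl}_{C_0}Z,\ldots$ that resolves $Z$, is an isomorphism over $Z^\reg$, and is functorial with respect to biholomorphisms: any self-biholomorphism of $Z$ preserves the loci $C_i$.

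First I would apply this canonical resolution to the special fiber $X$, obtaining the sequence of centers $C_i$. Then I would lift these centers to $\sX$ using local triviality: for every point of $\sX$ choose a chart $\sU\subset\sX$ with a biholomorphism $\sU\cong U\times S_0$ over $S_0$, where $U\subset X$ is open, and declare $\sC_0\cap \sU:=(C_0\cap U)\times S_0$, which is smooth over $S_0$. To check that these local candidates glue to a well-defined subspace $\sC_0\subset\sX$, I would analyze the transition maps: on an overlap, two trivializations differ by an $S$-biholomorphism $\varphi$ of an open set in $U\times S_0$, and for each $s$ the restriction $\varphi_s\colon U\to U$ is a self-biholomorphism of $U$. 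By functoriality of the canonical resolution for biholomorphisms, $\varphi_s$ preserves $C_0\cap U$, hence $\varphi$ preserves $(C_0\cap U)\times S_0$. Consequently $\sC_0$ is globally defined and smooth over $S$. I would then blow up $\sC_0$; since $\sC_0$ is smooth over $S$ and locally the blow-up is $\operatorname{Bl}_{C_0\cap U}U\times S_0$, the result $\sX^{(1)}\to S$ is again locally trivial, now a deformation of $\operatorname{Bl}_{C_0}X$. Iterating with $\sX^{(1)}$ and $\operatorname{Bl}_{C_0}X$ in place of $\sX$ and $X$ gives centers $\sC_i\subset\sX^{(i-1)}$ smooth over $S$; termination after finitely many steps follows from termination of the resolution of $X$, producing the simultaneous resolution $\pi\colon\sY\to\sX$. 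Because each $C_i$ is contained in the singular locus of the $i$-th partial resolution of $X$, the lifted centers $\sC_i$ avoid the relative smooth locus, so $\pi$ restricts to an isomorphism over $\sU$.

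The main obstacle is to have a functorial resolution algorithm for \emph{complex analytic} spaces whose canonical centers are preserved by arbitrary biholomorphisms; this is by now standard (see e.g.\ Bierstone--Milman or W\l odarczyk) but must be cited with care, since one uses it in the non-algebraic setting. Once this is in place, everything else is essentially formal: flatness of blow-ups along $S$-smooth centers, compatibility of the blow-up with the product structure in each chart, and the observation that the canonical centers are intrinsic to the fibers, so local triviality translates the resolution of $X$ into a resolution of $\sX$ one step at a time.
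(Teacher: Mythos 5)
Your proposal is correct and follows essentially the same route as the paper: both rely on the Bierstone--Milman algorithmic resolution, whose centers are intrinsic to the local rings (equivalently, functorial under local biholomorphisms), so that in each trivializing chart the fiberwise canonical center has the form $C\times S_0$, these glue to a relative center smooth over $S$, blowing up preserves local triviality, and one iterates. The only cosmetic difference is that the paper phrases the key gluing point via the embedding-independence of the invariant $\mathrm{inv}$ rather than via functoriality of the whole algorithm under biholomorphisms.
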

\begin{proof}
By \cite{BM97}, resolution of singularities works algorithmically, see also \cite{Vil89}. Given a global embedding $X\subset M$ into a smooth space $M$, Bierstone and Milman define an invariant $\iota:=\mathrm{inv}_X^e:M \to \Gamma$ with values in an ordered set in \cite[Theorem 1.14 and Remark 1.16]{BM97} such that the locus where $\iota$ is maximal is smooth and Zariski closed. As explained in \cite[proof of Theorem 1.6, p. 285]{BM97}, successively blowing up the maximal locus of $\iota$ gives an algorithmic resolution. The invariant $\iota$ a priori depends on the embedding $X \subset M$. However, it is explained in \cite[13.]{BM97} that it is in fact independent of the local embedding. It only depends on the local ring at the point and on the history of the blow up (which is how they obtain resolution results without the hypothesis of $X$ being embedded). 

Therefore, we may apply the same argument in the relative setting for locally trivial deformations. Given a point $p \in \sX$ mapping to $s\in S$, we choose neighborhoods $\sV$ of $p$ in $\sX$ and $S_0$ of $s$ in $S$ and a trivialization $\vphi:\sV \to[\isom] V \times S_0$ where $V=\sV \cap \sX_s$. The maximal locus of the Bierstone-Milman invariant $\iota$ defines a smooth closed subset $C \subset V^\sing$ of the singular locus $V^\sing \subset V$. By local triviality, the singular locus $\sV^\sing$ of $\sV \to S_0$ is identified under $\vphi$ with $V^\sing \times S_0$. Thanks to the above mentioned independence of $\iota$, the closed subsets $C \times S_0$ glue to give a center $\sC \subset \sX$ for a blow up and $\sC$ is smooth over $S$. Moreover, the blow up of $\sX$ in $\sC$ is by construction again locally trivial over $S$, hence we can repeat the process and obtain the sought-for resolution $\pi:\sY \to \sX$. 
\end{proof}
\begin{remark}\label{remark simultaneous resolution}
As the morphism $\pi:\sY\to\sX$ from the preceding lemma is obtained by successive blow ups in centers which are smooth over $S$, every such blow up family is locally trivial over $S$ and moreover, also the morphism $\pi$ is itself locally trivial. More precisely, for every open sets $\sV \subset \sX$ and $S_0\subset S$ admitting a trivialization $\vphi:\sV \to[\isom] V \times S_0$ where $V$ is the intersection of $\sV$ with some fiber over a point of $S_0$, there is a trivialization $\phi:\pi^{-1}\left(\sV\right) \to \pi^{-1}\left(V\right) \times S_0$ such that the diagram
\[
\xymatrix{
\pi^{-1}\left(\sV\right) \ar[d]\ar[r] & \ar[d] \pi^{-1}\left(V\right) \times S_0\\
\sV \ar[r] & V \times S_0\\
}
\]
commutes (and similarly for any intermediate step of the resolution procedure). 
\end{remark}

\begin{corollary}\label{cor deformation symplectic}Every small locally trivial deformation of a primitive symplectic variety $X$ is a primitive symplectic variety.  In particular, the locally trivial Kuranishi family of a primitive symplectic variety is universal (for locally trivial deformations) for all of its fibers.
\end{corollary}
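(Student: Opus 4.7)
To prove the first assertion I would check that each fiber $X_t=f^{-1}(t)$ of a small locally trivial deformation $f:\sX\to S$ of a primitive symplectic variety $X$ again satisfies all the defining axioms: normal, compact, Kähler, $H^1(X_t,\sO_{X_t})=0$, and the existence of a unique-up-to-scalar symplectic form extending holomorphically to a resolution. The first four properties are essentially immediate. Normality and the symplectic (i.e.\ rational Gorenstein) nature of the singularities are preserved because local triviality gives analytic isomorphisms of germs at corresponding points; compactness is automatic for proper morphisms; Kählerness of small locally trivial deformations of a compact Kähler space is standard (one glues local Kähler potentials on a trivializing cover); and $H^1(X_t,\sO_{X_t})=0$ follows from upper semicontinuity, using $H^1(X,\sO_X)=0$.

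The heart of the argument is to produce a compatible family of symplectic forms, for which I would invoke Lemma~\ref{lemma tx}. After shrinking $S$, the sheaf $L:=(f\circ j)_*\Omega^2_{\sU/S}$ is an invertible $\sO_S$-module compatible with arbitrary base change, where $j:\sU\hookrightarrow\sX$ denotes the relative regular locus. A trivializing section of $L$ whose value at $0\in S$ is the symplectic form $\sigma_X$ produces a relative $2$-form $\sigma\in H^0(\sU,\Omega^2_{\sU/S})$. Base-change compatibility immediately gives $h^0(X_t^{\reg},\Omega^2)=1$ with generator $\sigma_t:=\sigma|_{X_t}$, establishing the uniqueness of the symplectic form on each nearby fiber.

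To verify that $\sigma_t$ is actually a symplectic form I would check nondegeneracy through the top exterior power $\sigma^n$, $2n=\dim X$, viewed as a section of $\omega_{\sU/S}$. Since fibers are Gorenstein with singular locus of codimension $\geq 2$ (Theorem~\ref{theorem basic symplectic}), the relative dualizing sheaf $\omega_{\sX/S}$ is invertible (hence reflexive) on $\sX$, and $\sigma^n$ extends uniquely to a global section by normality plus the codimension bound. Its restriction to the special fiber generates $\omega_X$ because $\sigma_X$ is a symplectic form, and generation of an invertible sheaf is an open condition; hence $\sigma_t^n$ generates $\omega_{X_t}$ and $\sigma_t$ is everywhere nondegenerate on $X_t^{\reg}$ for $t$ close to $0$. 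Closedness $d\sigma_t=0$ is automatic because, by \cite{KS18}, $\sigma_t$ extends to any Kähler resolution of $X_t$, where holomorphic $2$-forms are closed by Hodge theory. Finally, to see that $\sigma_t$ extends holomorphically to some resolution of $X_t$ I would use the simultaneous resolution $\pi:\sY\to\sX$ provided by Lemma~\ref{lemma simultaneous resolution}: applying Kebekus--Schnell fiberwise, $\pi_t^*\sigma_t$ extends holomorphically to $Y_t$, exhibiting $X_t$ as a symplectic variety.

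The universality statement then follows formally: each fiber $X_t$ of the locally trivial Kuranishi family is now known to be primitive symplectic, so Lemma~\ref{lemma existence universal deformation} yields $H^0(X_t,T_{X_t})=0$; combined with versality of the Kuranishi family at every point of its base (Section~\ref{section existence}), this upgrades miniversality to universality at every fiber. The main delicate step I expect is the careful bookkeeping for the extension of $\sigma^n$ across the codimension-two relative singular locus in the analytic (possibly non-reduced) setting, alongside the stability of Kählerness for nearby fibers; both of these are well-trodden but require some technical care.
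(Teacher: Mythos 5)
Your proposal follows essentially the same route as the paper: Lemma~\ref{lemma tx} to produce the relative $2$-form and the one-dimensionality of $h^0(X_t^\reg,\Omega^2)$, semicontinuity for $H^1(\sO)$, the simultaneous resolution of Lemma~\ref{lemma simultaneous resolution} for the extension to a resolution, and Lemma~\ref{lemma existence universal deformation} plus openness of versality for the universality claim. Two remarks. First, the one step where your justification as written would not go through is K\"ahlerness of the nearby fibers: ``gluing local K\"ahler potentials on a trivializing cover'' does not work, because the trivializations on overlapping charts do not agree, so transporting the potentials of a K\"ahler form on the central fiber along them destroys the pluriharmonicity of the differences on overlaps; stability of K\"ahlerness under small deformations is genuinely false for general compact complex spaces. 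The paper instead invokes the fact that $X$ has rational (indeed canonical) singularities, by Theorem~\ref{theorem basic symplectic}, and then cites Namikawa's result that small deformations of compact K\"ahler spaces with rational singularities remain K\"ahler; you should route through that. Second, your treatment of nondegeneracy via $\sigma^n$ as a section of the invertible relative dualizing sheaf on all of $\sX$ is a genuine (and welcome) refinement: the degeneracy locus of the relative form lives a priori only in $\sU$, which is not proper over $S$, so ``nondegenerate on the central fiber implies nondegenerate nearby'' needs the propriety of $\sX\to S$ that your reformulation supplies; the paper asserts this openness in one line. Your fiberwise use of Kebekus--Schnell (legitimate, since local triviality propagates rational singularities to the fibers) in place of the paper's comparison isomorphism $f_*\pi_*\Omega^2_{\sY/S}\to (f\circ j)_*\Omega^2_{\sU/S}$ is an equivalent variant.
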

\begin{proof}
Let $f:\sX\to S$ be a small locally trivial deformation of $X=f^{-1}(0)$, $0\in S$. First note that $X$ has canonical, hence rational singularities by Theorem~\ref{theorem basic symplectic}, so by \cite[Proposition~5]{Nam01b}, nearby fibers remain K\"ahler. 
We choose a simultaneous resolution $\pi:\sY \to \sX$ over $S$, denote by $j:\sU \to \sX$ the inclusion of the regular locus, and consider the canonical morphism $f_*\pi_*\Omega^2_{\sY/S} \to (f\circ j)_*\Omega^2_{\sU/S}$. Both sheaves are locally free and compatible with arbitrary base change, the former by the argument of \cite[Théorème~5.5]{Del68}---see e.g. \cite[Lemma~2.4]{BL16} for the necessary changes in the analytic category---the latter by Lemma~\ref{lemma tx}. As $X$ is a primitive symplectic variety, both sheaves are invertible and the above morphism is an isomorphism at the point corresponding to $X$, hence in a small neighborhood. We thus find a relative holomorphic $2$-form $\omega$ on $\sU$ whose pullback extends to a holomorphic $2$-form on $\sY$. As the restriction $\omega_0$ to the fiber $X=\scrX_0$ is nondegenerate, the same is true for the restriction $\omega_s$ to $\sX_s$ for $s\in S$ close to $0$. Hence, the nearby fibers $\sX_s$ are symplectic varieties whose symplectic form is unique up to scalars. By semi-continuity, $H^{1}(\sX_s, \sO_{\sX_s})=0$ for all $s$ in a neighborhood of $0 \in S$, and so the first claim follows. The last claim follows directly from Lemma \ref{lemma existence universal deformation} and openness of versality, see \cite[Hauptsatz, p~140]{Gra74}.
\end{proof}


\subsection{Deformations of line bundles}\label{section deformation of line bundles}
Let $X$ be a primitive symplectic variety and $L$ a line bundle on it. We will frequently consider deformations of the pair $(X,L)$.  For this purpose one considers the morphism $d\log : \sO^\times_X\to\Omega_X$, $f\mapsto \dfrac{df}{f} $ and the induced first Chern class morphism $$\chern_1:H^1(X,\sO_X^\times)\to H^1(X,\Omega_X)\to H^1(X,\Omega^{[1]}_X)$$ which takes values in the cohomology of reflexive differentials. Recall that we have $H^1(X,\Omega_X^{[1]})\isom H^{1,1}(X)$ by \cite[Corollary 2.3]{BL16}.

\begin{lemma}\label{lemma deformations line bundle}
Let $L$ be a nontrivial line bundle on $X$. Then the canonical projection $\Def^\lt(X,L)\to \Def^\lt(X)$ is a closed immersion and identifies $\Def^\lt(X,L)$ with a smooth hypersurface whose tangent space is equal to $$\ker\left(H^1(X,T_X)\to[\cup \ \chern_1(L)] H^2(X,\sO_X)\right)$$ where the map is given by contraction and cup product.
\end{lemma}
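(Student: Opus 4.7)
The plan follows the standard obstruction theory for pair deformations, adapted to the singular primitive-symplectic setting, where the key cohomological facts are $H^1(X,\sO_X)=0$ (so line bundle lifts are unique when they exist) and $\dim H^2(X,\sO_X)=1$ (so obstructions live in a line).

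First I would check that the forgetful map $\Def^\lt(X,L)\to \Def^\lt(X)$ is injective on isomorphism classes at every infinitesimal level, which gives the closed immersion assertion once smoothness is in hand. Over any Artinian $S$ with a locally trivial deformation $\sX/S$ of $X$, two lifts of $L$ differ by a class in $H^1(\sX,\sO_\sX)$, and automorphisms of a lift fixing $\sX$ form a $\C^\times$-torsor. Arguing as in the proof of Theorem~\ref{theorem deflt is smooth}, $H^1(\sX,\sO_\sX)$ is locally free and base-change compatible; since it vanishes on the central fibre it vanishes on all of $S$, so any two lifts are (uniquely up to scalar) isomorphic, and the fibres of the forgetful map are empty or singletons.

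To pin down the image on tangent vectors, I would carry out the classical Atiyah-class/\v{C}ech cocycle computation. Given $\alpha \in H^1(X,T_X)$ corresponding to a first-order deformation $\sX_\alpha \to \Spec \C[\eps]/(\eps^2)$, local triviality furnishes a cover trivializing both $\sX_\alpha$ and $L$, and the obstruction to gluing local lifts of $L$ is a \v{C}ech 2-cocycle representing $\alpha\cup \chern_1(L) \in H^2(X,\sO_X)$, where the cup is induced by the contraction $T_X\tensor \Omega_X^{[1]}\to \sO_X$ inherited from the smooth locus via Lemma~\ref{lemma tx} and \cite[Corollary~1.8]{KS18}. I would then establish surjectivity of this map: contraction with $\sigma$ identifies $T_X\cong \Omega_X^{[1]}$ by Lemma~\ref{lemma tx}, so $\cup \chern_1(L)$ becomes the cup product pairing $H^1(\Omega_X^{[1]})\to H^2(X,\sO_X) \cong \overline{H^0(X,\Omega_X^{[2]})} = \C\bar\sigma$, which is the restriction of the BBF/intersection pairing to $H^{1,1}(X)$ up to a fixed nonzero scalar. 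Since $L$ is nontrivial and $H^1(\sO_X)=0$, the exponential sequence forces $\chern_1(L)\neq 0$ in $H^{1,1}(X)$, and nondegeneracy of the BBF form on $H^{1,1}(X)$ makes the map nonzero, hence surjective onto the line.

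Smoothness of $\Def^\lt(X,L)$ I would verify by Kawamata--Ran $T^1$-lifting, in parallel with the proof of Theorem~\ref{theorem deflt is smooth}. Over any Artinian $S$ with a pair deformation $(\sX,\sL)/S$, the first-order pair deformations are computed by $\ker(H^1(\sX,T_{\sX/S})\to H^2(\sX,\sO_\sX))$, a kernel of a morphism between locally free and base-change-compatible $\sO_S$-modules (Lemma~\ref{lemma tx} and \cite[Lemma~2.4]{BL16}); pointwise surjectivity at $0$ propagates to a neighborhood so the cokernel is invertible and the kernel is locally free and base-change compatible, yielding the $T^1$-lifting criterion and smoothness of dimension $h^{1,1}(X)-1$. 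Combined with the first step, this gives a closed immersion onto a smooth hypersurface with the stated tangent space. The main technical obstacle is the \v{C}ech identification of the obstruction with $\alpha\cup \chern_1(L)$ in the reflexive setting, where one has to extend the contraction pairing from $X^\reg$, replace $\Omega_X$ by $\Omega_X^{[1]}$ throughout, and verify that the standard smooth-case manipulations survive after reflexive hulls.
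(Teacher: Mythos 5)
Your proposal is correct and follows essentially the same route as the paper: the paper packages the obstruction calculus into the Atiyah extension $0\to\sO_X\to E_L\to T_X\to 0$ and cites Sernesi and Huybrechts, while you unpack exactly that content by hand (uniqueness of lifts from $H^1(X,\sO_X)=0$, the \v{C}ech identification of the obstruction with $\alpha\cup\chern_1(L)$, surjectivity onto $H^2(X,\sO_X)$ via the symplectic identification $T_X\cong\Omega_X^{[1]}$ and nondegeneracy of $q_X$ on $H^{1,1}$, and $T^1$-lifting for smoothness). The underlying mechanism is identical, so no further comparison is needed.
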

\begin{proof}
We have a canonical map $$H^1(X,\Omega^{[1]}_X) = \Ext_X^1(\sO_X,\Omega_X^{[1]})\to \Ext_X^1(T_X,\sO_X)$$ given by sending an extension to its dual (observe that $\sExt_X^1(\sO_X,\sO_X)=0$). Therefore, $\chern_1(L) \in H^1(X,\Omega_X^{[1]})$ gives rise to an extension 
\begin{equation}\label{eq deformations of linebundles}
0\ \to \sO_X\to E_L \to T_X\to 0
\end{equation}
and the sheaf  $E_L$ is shown to control the deformation theory of the pair $(X,L)$ in the sense that $H^1(X,E_L)$ is the tangent space to the functor $\sD_{(X,L)}$ of deformations of the pair $(X,L)$ and $H^2(X,E_L)$ is an obstruction space, see e.g. \cite[Theorem 3.3.11]{Ser06}. The proof there is written for nonsingular projective varieties only, however, the argument is the same for \emph{locally trivial} deformations of compact complex spaces. The rest of the proof is exactly as in \cite[1.14]{Huy99}.
\end{proof}

\section{The Beauville--Bogomolov--Fujiki form and local Torelli}\label{section BBF}

In this section, we develop the theory of the Beauville--Bogomolov--Fujiki (BBF) form for primitive symplectic varieties. Thanks to previous works by several authors (see Section~\ref{section bbf details}) such a form exists and was known to share many properties with its counterpart in the smooth case. After a brief summary of these results with no claim for originality, the first fundamentally new result is the local Torelli theorem for locally trivial deformations, see Proposition~\ref{proposition local torelli}, which was established for $\Q$-factorial terminal varieties by Namikawa \cite[Theorem~8]{Nam01b}. With this at hand, we prove many advanced features of the BBF form that are known in the smooth case: the higher degree Fujiki relations in Proposition \ref{proposition fujiki relations}, a Riemann-Roch type formula in Corollary \ref{corollary riemann roch via q}, and the non-existence of subvarieties of odd dimension on a general deformation in Corollary \ref{corollary no odd subvarieties}.

The material developed in this section is essential in the proof of the projectivity criterion in Section \ref{section projectivity criterion}.

\subsection{The Beauville--Bogomolov--Fujiki form}\label{section bbf details}

Let $X$ be a primitive symplectic variety. Due to the work of Namikawa \cite{Nam01b}, Kirschner \cite{Kir15}, Matsushita \cite{Mat15}, and Schwald \cite{Sch17} there is a nondegenerate quadratic form $q_X:H^2(X,\R) \to \R$ whose associated bilinear form has signature $(3,b_2(X)-3)$. As for irreducible symplectic manifolds, we will refer to $q_X$ as the \emph{Beauville--Bogomolov--Fujiki (BBF) form}, see Definition \ref{definition bbf form}. We will use it to establish a local Torelli theorem in Proposition \ref{proposition local torelli} and we will see in Proposition \ref{proposition fujiki relations} that it satisfies analogous Fujiki relations as it does for irreducible symplectic manifolds.

We will first recall the following definition, see \cite[Definition 3.2.7]{Kir15} and also \cite[Definition 20]{Sch17}.

\begin{definition}\label{definition bbf form unnormalized}
Let $X$ be a compact complex variety of Fujiki class $\scrC$ and dimension $2n$ with rational singularities let $\sigma \in H^{2,0}(X)$ be the cohomology class of a holomorphic $2$-form on $X^\reg$ (recall from Lemma~\ref{lemma peternell} that the Hodge structure on $H^2(X,\Z)$ is pure). We denote by $\int_X : H^{4n}(X,\Z) \to \Z$ the cap product with the fundamental class. Then one defines a quadratic form $q_{X,\sigma}:H^2(X,\C) \to \C$ via
\begin{equation}\label{eq bbf}
q_{X,\sigma}(\alpha):= \frac{n}{2} \int_X \left(\sigma\bar\sigma\right)^{n-1}\alpha^2 + (1-n)\int_X\sigma^n\bar\sigma^{n-1}\alpha \int_X \sigma^{n-1}\bar\sigma^n\alpha.
\end{equation}
\end{definition}

If $X$ is a primitive symplectic variety, one can also define a form $q_{Y,\sigma}$ on a resolution of singularities $\pi:Y\to X$ by the analog of formula \eqref{eq bbf} where $\sigma$ is replaced by the extension of the symplectic form to $Y$ and $q_{X,\sigma}$ is the restriction to $H^2(X,\Q)\subset H^2(Y,\Q)$. This is Namikawa's approach, see \cite{Nam01b}, and both are equivalent by \cite[Corollary 22]{Sch17}. Note that Schwald assumes $X$ to be projective but this is in fact not used in the argument. 

The following result is already contained in the work of Namikawa \cite{Nam01b}, Matsushita \cite{Mat01}, Kirschner \cite{Kir15}, Schwald \cite{Sch17}. Let us emphasize that the projectivity hypothesis which is sometimes made is in fact not necessary. Denote by $b_i(X):=\dim_\Q H^i(X,\Q)$, $i \in \N_0$ the $i$-th Betti number.

\begin{lemma}\label{lemma signature bbf form}
Let $X$ be as in Definition~\ref{definition bbf form unnormalized}. Then the quadratic form 
$$q_{X,\sigma}:H^2(X,\R)\otimes H^2(X,\R)\to \R(-2)$$
 is a morphism of $\R$-Hodge structures. If $X$ is a primitive symplectic variety, $q_{X,\sigma}$ is nondegenerate and has signature $(3,b_2(X)-3)$. Furthermore, if $\sigma$ is chosen such that $\int_{X}\left(\sigma\bar\sigma\right)^n=1$, then $q_{X,\sigma}$ does not depend on~$\sigma$.  
\end{lemma}
\begin{proof}
It is immediate from \eqref{eq bbf} that $q_{X,\sigma}$ is defined over $\R$ so that the statements of the lemma make sense. The first statement is easily verified. The statement about the signature (and hence also nondegeneracy) is \cite[Theorem 2]{Sch17}. The statement about independence of $q_{X,\sigma}$ for normalized $\sigma$ is \cite[Lemma 24]{Sch17}.
\end{proof}

\begin{definition}\label{definition bbf form}
Let $X$ be a primitive symplectic variety of dimension $2n$ and let $\sigma \in H^{2,0}(X)$ be the cohomology class of a holomorphic symplectic $2$-form on $X^\reg$ satisfying $\int_{X}\left(\sigma\bar\sigma\right)^n=1$. Then the \emph{Beauville--Bogomolov--Fujiki (BBF) form} is the quadratic form $q_X:=q_{X,\sigma}$, up to scaling.
\end{definition}

It is not hard now to deduce a local Torelli theorem for locally trivial deformations. 
Preliminary versions have been established by Namikawa \cite{Nam01a}, Kirschner \cite[Theorem 3.4.12]{Kir15}, Matsushita \cite{Mat15}, and the authors \cite{BL16}. 

\begin{proposition}[Local Torelli Theorem]\label{proposition local torelli}
Let $X$ be a primitive symplectic variety, let $q_X$ be its BBF form, and let 
\begin{equation}\label{eq local period domain}
\Omega(X):=\{[\sigma]\in \P(H^2(X,\C))\mid q_X(\sigma)=0, q_X(\sigma,\bar\sigma)>0\}  
\end{equation}
be the period domain for $X$ inside $\P(H^2(X,\C))$. If $f:\scrX \to \Def^\lt(X)$ denotes the universal locally trivial deformation of $X$ and $X_t:=f^{-1}(t)$, then the local period map
\begin{equation}\label{eq local period map}
\wp:\Def^\lt(X) \to \Omega(X), \quad t \mapsto H^{2,0}(X_t)
\end{equation}
is a local isomorphism.
\end{proposition}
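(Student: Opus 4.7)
The plan is to verify that $\wp$ is a holomorphic map between manifolds of the same dimension whose differential at the distinguished point is an isomorphism, and then conclude by the inverse function theorem. First, one checks that $\wp$ is well-defined and holomorphic. By local triviality of the family $\scrX \to \Def^\lt(X)$, the sheaf $R^2f_*\Z$ is a local system, so after restricting to a small contractible neighborhood $S_0$ of $0\in\Def^\lt(X)$ we may trivialize $H^2(X_t,\C)\cong H^2(X,\C)$ and transport the BBF form. The line $L_t := H^{2,0}(X_t)=(f\circ j)_*\Omega^2_{\sU/S}|_t$ is a holomorphically varying line in $H^2(X,\C)$ by Lemma~\ref{lemma tx}, and the BBF form vanishes on a symplectic form for purely formal reasons (the integrand $\sigma_t^{n+1}\bar\sigma_t^{n-1}$ vanishes pointwise on the regular locus, and the second term in \eqref{eq bbf} vanishes after squaring), while $q_X(\sigma_t,\bar\sigma_t)>0$ holds at $t=0$ and persists for $t$ near $0$ by continuity. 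Hence $\wp$ lands in $\Omega(X)$ and is holomorphic.

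Next I would compare dimensions and compute the differential. By Theorem~\ref{theorem deflt is smooth}, $\Def^\lt(X)$ is smooth of dimension $h^{1,1}(X)$, while the tangent space to $\Omega(X)$ at $[\sigma]$ is naturally identified with $\sigma^\perp/\C\sigma \cong H^{1,1}(X)$ (using the Hodge decomposition $\sigma^\perp=\C\sigma\oplus H^{1,1}(X)$, which has dimension $h^{1,1}(X)=b_2(X)-2$). The differential of $\wp$ at $0$ is the standard Kodaira--Spencer--type map: given $\xi\in H^1(X,T_X)$, represented by a Čech cocycle gluing two local trivializations, one contracts $\xi$ with the symplectic form $\sigma$ to obtain a class $\xi\lrcorner\sigma\in H^1(X,j_*\Omega^1_{U})=H^{1,1}(X)$. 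This identification of $d\wp_0$ follows because for locally trivial deformations the relative de~Rham complex may be computed using $j_*\Omega^\bullet_{\sU/S}$ (the sheaves are flat and compatible with base change by Lemma~\ref{lemma tx}), so the usual Griffiths calculation of $d\wp$ via the relative Hodge filtration goes through verbatim with the reflexive differentials.

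Finally, by Lemma~\ref{lemma tx} the contraction with $\sigma$ yields an isomorphism $T_X\otimes L\xrightarrow{\cong} j_*\Omega^1_{U}$, and passing to $H^1$ this shows $d\wp_0:H^1(X,T_X)\xrightarrow{\cong} H^{1,1}(X)$ is a linear isomorphism. Since both $\Def^\lt(X)$ and $\Omega(X)$ are smooth (the latter being an open subset of a smooth quadric in $\P(H^2(X,\C))$) of the same dimension $h^{1,1}(X)$, the inverse function theorem gives that $\wp$ is a local biholomorphism at $0$. I expect the main obstacle to be the second step: ensuring that Griffiths' standard description of the differential of the period map carries over, since one must work with the pushed-forward reflexive sheaves $j_*\Omega^\bullet_{\sU/S}$ instead of the genuine relative de~Rham complex. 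This is made possible precisely by the local freeness and base-change statement of Lemma~\ref{lemma tx} and the identification of $H^{1,1}(X)$ with $H^1(X,\Omega^{[1]}_X)$ recalled in Section~\ref{section bbf details}.
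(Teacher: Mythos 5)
Your overall strategy coincides with the paper's: holomorphicity of $\wp$ via Lemma~\ref{lemma tx}, the identification of $d\wp_0$ with contraction of $H^1(X,T_X)$ against the symplectic form, the isomorphism $T_X\otimes L\cong j_*\Omega^1_U$ from Lemma~\ref{lemma tx} to see that $d\wp_0$ is bijective onto the tangent space of the quadric, and the dimension count from Theorem~\ref{theorem deflt is smooth}. The step you single out as the main obstacle (transporting Griffiths' computation of the differential to the reflexive setting) is handled in the paper exactly as you propose.

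There is, however, a genuine gap in the step you treat as "purely formal," namely that $\wp$ lands in $\Omega(X)$. The domain $\Omega(X)$ is cut out by the \emph{fixed} form $q_X=q_{X_0,\sigma_0}$ of the central fiber, so what must be shown is $q_{X_0,\sigma_0}(\sigma_t)=0$, where $\sigma_t$ is viewed in $H^2(X,\C)$ via the Gauss--Manin trivialization. Your justification --- that the integrand $\sigma_t^{n+1}\bar\sigma_t^{n-1}$ vanishes pointwise and the second term of \eqref{eq bbf} then vanishes as well --- computes $q_{X_t,\sigma_t}(\sigma_t)$, i.e.\ you are substituting $\sigma=\alpha=\sigma_t$ into \eqref{eq bbf}. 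That quantity is indeed identically zero, but it is a tautology about the fiber's own form, not the required statement: the first term of $q_{X_0,\sigma_0}(\sigma_t)$ is $\tfrac{n}{2}\int_X(\sigma_0\bar\sigma_0)^{n-1}\sigma_t^2$, which does not vanish for pointwise type reasons since $\sigma_t$ has nontrivial components of all Hodge types with respect to the Hodge structure of $X_0$. Bridging the two statements amounts to the locally trivial deformation invariance of the BBF form, which in this paper is deduced (Lemma~\ref{lemma bbf topological}) \emph{from} the local Torelli theorem, so invoking it here would be circular. The correct fix, and what the paper does, is Beauville's argument from the first paragraph of the proof of Th\'eor\`eme~5~(b) of \cite{Bea83}: one uses that $[\sigma_t]^{n+1}=0$ in $H^{2n+2}(X,\C)$ for all $t$ near $0$ (a consequence of the pointwise vanishing you observe, now applied along the whole curve of periods) and expands the resulting integral identities to conclude that the holomorphic function $t\mapsto q_{X_0,\sigma_0}(\sigma_t)$ vanishes identically. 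With that substitution your proof is complete and agrees with the paper's.
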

\begin{proof}
Let us denote by $j:\sU \to \sX$ the inclusion of the regular locus. By Lemma~\ref{lemma tx}, the sheaf $L:=(f\circ j)_*\Omega^2_{\sU/S}$ is invertible and compatible with arbitrary base change. From this and \cite[Corollary~2.3]{BL16} we deduce that the subbundle $L\subset H^2(X,\C)\tensor \sO_{\Def^\lt(X)}$ defines the period map $\Def^\lt(X) \to \P(H^2(X,\C))$ which therefore is holomorphic. We will argue as in \cite[Théorème 5]{Bea83} to prove that it takes values in $\Omega(X)$. The statement is local, so it suffices to show that $q_X(\sigma_t)=0$ where $\sigma_t$ is a section of $f_*\Omega_{\scrX/S}^2$ evaluated at $t\in S$ for $t$ sufficiently close to the origin. This is done in the same way as in the first paragraph of the proof of \cite[Théorème 5 (b)]{Bea83}.
Let $j:U\into X$ denote the inclusion of the regular part. It is well-known that the differential of $\wp$ at zero can be described as the map
\[
H^1(X,T_X) \to \Hom(H^0(X,j_*\Omega_U^2),H^1(X,j_*\Omega_U^1))
\]
given by cup product and contraction. This is clearly an isomorphism as $H^0(X,j_*\Omega_U^2)$ is spanned by the symplectic form. Therefore, \eqref{eq local period map} is an isomorphism in a neighborhood of zero.
\end{proof}

\begin{remark}\label{remark local torelli} Namikawa assumes $\Q$-factorial terminal singularities for his local Torelli theorem \cite[Theorem~8]{Nam01b}, and in this case all deformations are locally trivial. Proposition~\ref{proposition local torelli} shows that in fact local triviality (and not the kind of singularities) is the essential ingredient.
\end{remark}

The local Torelli theorem can be exploited just as for irreducible symplectic manifolds. We start with the integrality of the quadratic form.

\begin{lemma}\label{lemma bbf topological}
The BBF form $q_{X}$ is up to a multiple a nondegenerate quadratic form $H^2(X,\Z)\to\Z$. Moreover, it is invariant under locally trivial deformations. 
\end{lemma}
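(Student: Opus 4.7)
The proof splits into two parts, corresponding to the two claims. For deformation invariance, let $f:\scrX\to S$ be a locally trivial family with $X=X_0$. Since locally trivial families are topological fiber bundles, after shrinking $S$ the local system $R^2 f_*\Z$ trivializes, giving identifications $H^2(X_t,\Z)\cong H^2(X,\Z)$ compatible with cup product and fundamental class. Through these identifications, formula \eqref{eq bbf} expresses $q_{X_t}$ as a polynomial in the symplectic class $\sigma_t \in H^{2,0}(X_t)$. The strategy is to apply Proposition~\ref{proposition local torelli} twice: at $0 \in S$, the periods $[\sigma_t]$ sweep an open subset of the quadric $\{q_{X_0}=0\}\subset\P(H^2(X,\C))$; at any other $t_0\in S$, the same periods sweep an open subset of $\{q_{X_{t_0}}=0\}$. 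Since two nondegenerate quadrics sharing an open set of points must coincide, $q_{X_{t_0}}$ is proportional to $q_{X_0}$.

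For integrality, I would establish a Fujiki-type relation $F = c_X \cdot q_X^n$, where $F(\alpha):=\int_X\alpha^{2n}$ is integer-valued on $H^2(X,\Z)$. For $\sigma \in H^{2,0}(X)$ and any $v\in H^2(X,\C)$, expanding
\[
F(\sigma+\epsilon v)=\sum_{k=0}^{2n}\binom{2n}{k}\epsilon^k\int_X\sigma^{2n-k}v^k
\]
and using that $\sigma^{2n-k}$ has Hodge type $(2(2n-k),0)$ which vanishes for $k<n$ (since $\dim_\C X=2n$) shows $F(\sigma+\epsilon v)=O(\epsilon^n)$, so $F \in \gothm_{[\sigma]}^n$ at every period point. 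As the vanishing of the partial derivatives $\partial^\alpha F$ at $[\sigma]$ for $|\alpha|<n$ is a Zariski-closed condition, local Torelli plus Zariski density extend it to the entire irreducible quadric $Q=\{q_X=0\}$. A short commutative-algebra argument using the irreducibility of $q_X$ then gives $q_X^n\mid F$, and the degree count forces $F = c_X \cdot q_X^n$ for some $c_X \in \R$. Since $b_2 \geq 3$ makes $q_X$ the unique $\C$-irreducible factor of $F$, the group $\mathrm{Aut}(\C/\Q)$ must fix $q_X$ up to scalar in the factorization of $F\in\Z[x_1,\ldots,x_{b_2}]$. Hence $q_X = \lambda \cdot q_0$ for some $q_0\in\Q[x_1,\ldots,x_{b_2}]$ and $\lambda\in\R^\times$; clearing denominators of $q_0$ produces an integer-valued form proportional to $q_X$.

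The main obstacle will be bridging from the pointwise Hodge-type vanishing of $F$ at periods to the global divisibility $q_X^n\mid F$. Local Torelli provides the open subset of the quadric on which the high-order vanishing holds, and then the irreducibility of $q_X$ together with a standard argument on orders of vanishing along a smooth hypersurface promotes this to the required divisibility. Once the Fujiki relation $F = c_X q_X^n$ is in hand, the remaining Galois-theoretic extraction of the integral form from $F$ is essentially formal.
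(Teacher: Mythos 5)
Your proposal is correct, and its skeleton (local Torelli $\Rightarrow$ a Fujiki-type identity $\Rightarrow$ integrality) matches the paper's, but the execution differs in both halves. For integrality, the paper follows Beauville's Th\'eor\`eme 5(a) verbatim: from local Torelli it extracts the polarized identity
\[
 v(\lambda)^2 q_X(\alpha) = q_X(\lambda)\Bigl( (2n-1)v(\lambda)\textstyle\int_X\lambda^{2n-2}\alpha^2 - (2n-2)\bigl(\int_X\lambda^{2n-1}\alpha\bigr)^2\Bigr),
\]
from which integrality of a real multiple of $q_X$ is immediate (for integral $\lambda,\alpha$ the bracket is an integer, and $q_X(\lambda)/v(\lambda)^2$ is a fixed real scalar), with no Galois-theoretic step. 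You instead prove the full top-degree Fujiki relation $\int_X\alpha^{2n}=c_X\,q_X(\alpha)^n$ via order-of-vanishing along the period quadric and then extract a rational form by unique factorization and $\Aut(\C/\Q)$; this is valid (your identity is exactly the unpolarized version of the displayed one) and has the side benefit of reproving the $k=2n$ case of Proposition~\ref{proposition fujiki relations}, which the paper obtains separately by a Mumford--Tate argument. For deformation invariance the paper simply observes that the displayed identity expresses $q_X$, up to scalar, in terms of topological integrals that are locally constant in a locally trivial family (computable on a simultaneous resolution), whereas you apply local Torelli at two base points and compare the two quadrics; that also works.

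Two small points to tighten. First, your two-point quadric comparison requires the periods to sweep out Euclidean-open subsets of both quadrics, which holds for the locally trivial Kuranishi family (universal for all its fibers by Corollary~\ref{cor deformation symplectic}) but not for an arbitrary locally trivial family; you should reduce to the Kuranishi case first, which is standard. Second, both your arguments only pin down $q_{X_t}$ and the integral form up to a real scalar; to get honest invariance of the normalized primitive integral form you should fix the sign using $q(\sigma,\bar\sigma)>0$ (or the signature $(3,b_2-3)$). Neither is a genuine gap.
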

\begin{proof}
The second statement is a consequence of the first, so we are left to prove integrality. This is done as in \cite[Théorème 5 (a)]{Bea83}: we deduce from the local Torelli theorem \ref{proposition local torelli} the following formula. For every $\lambda \in H^2(X,\C)$ we denote $v(\lambda):=\int_X\lambda^{2n}$ where $2n=\dim X$.   Note that for a locally trivial deformation $f:\mathscr{X}\to S$ of $X$, if $\lambda$ is a section of $R^2f_*\C$ then $v(\lambda)$ is locally constant as it can be computed on a simultaneous resolution.  For every $\alpha \in H^2(X,\C)$ we have 
\begin{equation}\label{eq integral form}
 v(\lambda)^2 q_X(\alpha) = q_X(\lambda)\left( (2n-1)v(\lambda)\int_X\lambda^{2n-2}\alpha^2 - (2n-2)\left(\int_X\lambda^{2n-1}\alpha\right)^2\right)
\end{equation}
This formula immediately shows that some real multiple of $q_X$ is defined over $\Z$.
\end{proof}

\begin{remark}As a consequence of Lemma \ref{lemma bbf topological}, we will always normalize the BBF form $q_X$ so it is a (usually primitive) integral form. 
\end{remark}

For the sake of completeness, let us summarize a statement that is well known in the smooth case.

\begin{corollary}\label{corollary deformations of line bundles}
Let $X$ be a primitive symplectic variety and let $L$ be a line bundle on it. Under the local isomorphism $\Def^\lt(X)\to \Omega(X)$ by the period map, the subspace  $\Def^\lt(X,L)$ of deformations of the pair $(X,L)$ is identified with $\P(\chern_1(L)^\perp)\cap \Omega(X)$. \qed
\end{corollary}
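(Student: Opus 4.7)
The plan is to verify $\wp(\Def^\lt(X,L)) = \P(\chern_1(L)^\perp) \cap \Omega(X)$ as germs at the period point $[\sigma]$ of $X$. Assume $\chern_1(L) \neq 0$; the case $\chern_1(L) = 0$ is trivial, both sides then equaling the full spaces. I would proceed in two steps: (i) verify the set-theoretic inclusion $\wp(\Def^\lt(X,L)) \subset \P(\chern_1(L)^\perp) \cap \Omega(X)$, and (ii) check that both sides are germs of smooth hypersurfaces of the same dimension, so that the inclusion must be an equality.

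For step (i), let $(\mathscr{X},\mathscr{L}) \to \Def^\lt(X,L)$ be the universal family. Local triviality yields a parallel-transport identification $H^2(X_t,\C) \cong H^2(X,\C)$ carrying $\chern_1(\mathscr{L}_t)$ to $\chern_1(L)$, so that $\chern_1(L) \in H^{1,1}(X_t)$ for every $t$. I would then translate this $(1,1)$-condition into $q_X$-orthogonality with the period $\sigma_t \in H^{2,0}(X_t)$ via the fact, recorded in Lemma \ref{lemma signature bbf form}, that $q_X \colon H^2(X) \otimes H^2(X) \to \R(-2)$ is a morphism of Hodge structures with target of pure type $(2,2)$. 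Decomposing the real class $\chern_1(L) = \alpha + \beta + \bar\alpha$ with $\alpha \in H^{2,0}(X_t)$ and $\beta \in H^{1,1}(X_t) \cap H^2(X,\R)$, the contributions $q_X(\alpha,\sigma_t)$ and $q_X(\beta,\sigma_t)$ lie in the $(4,0)$- and $(3,1)$-pieces of the target and hence vanish, while $q_X(\bar\alpha,\sigma_t)$ lies in the $(2,2)$-piece and is a scalar multiple of the positive real number $q_X(\bar\sigma_t,\sigma_t)$. Thus $q_X(\chern_1(L),\sigma_t) = 0$ iff $\alpha = 0$ iff $\chern_1(L) \in H^{1,1}(X_t)$, giving the desired inclusion.

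For step (ii), Lemma \ref{lemma deformations line bundle} identifies $\Def^\lt(X,L)$ with a smooth hypersurface in $\Def^\lt(X)$, and Proposition \ref{proposition local torelli} says $\wp$ is a local isomorphism, so $\wp(\Def^\lt(X,L))$ is a smooth germ of hypersurface in $\Omega(X)$ at $[\sigma]$. On the other hand, reality of $\chern_1(L) \neq 0$ combined with $q_X(\sigma,\sigma) = 0 < q_X(\sigma,\bar\sigma)$ forces $\chern_1(L)$ not to be a scalar multiple of $\sigma$; hence the linear hyperplane $\P(\chern_1(L)^\perp) \subset \P(H^2(X,\C))$ is distinct from the tangent hyperplane $\P(\sigma^\perp)$ to the smooth quadric $\Omega(X)$ at $[\sigma]$ and cuts $\Omega(X)$ transversely there. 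This exhibits $\P(\chern_1(L)^\perp) \cap \Omega(X)$ as a smooth germ of hypersurface in $\Omega(X)$, and two such germs of the same dimension with one contained in the other must coincide.

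The main technical point is step (i): the clean Hodge-theoretic translation between $q_X$-orthogonality with the period and the $(1,1)$-type condition on $\chern_1(L)$. Once this is recognized as a direct consequence of $q_X$ being a morphism of Hodge structures, the remainder is a formal comparison of two smooth hypersurface germs via the local Torelli isomorphism.
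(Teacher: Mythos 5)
Your argument is correct, and it is essentially the intended one: the paper states this corollary without proof as an immediate consequence of the local Torelli theorem (Proposition \ref{proposition local torelli}) and Lemma \ref{lemma deformations line bundle}, and your two steps --- translating the $(1,1)$-condition into $q_X$-orthogonality with the period via the fact that $q_X$ is a morphism of Hodge structures, then matching two smooth hypersurface germs through the local isomorphism $\wp$ --- fill in exactly the standard details. No gaps.
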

We will frequently simply write $\alpha^\perp$ instead of $\P(\alpha^\perp)\cap \Omega(X)$ for a class $\alpha\in H^2(X,\C)$.  

\subsection{A theorem of Verbitsky}
Let $X$ be a primitive symplectic variety of dimension $2n=\dim X$.  In Section~\ref{section proj degen} we will need the following analog of a theorem of Verbitsky \cite[Theorem 1.5]{Ver96} (see also \cite{Bog96} and \cite[Proposition 24.1]{GHJ03}):
\begin{proposition} \label{sym} Let $S^*H^2(X,\C)$ be the image of the cup product map 
$$\Sym^* H^2(X,\C)\to H^{*}(X,\C).$$
  Then
\[S^*H^2(X,\C)\cong \Sym^*H^2(X,\C)/\langle x^{n+1}\mid q_X(x)=0\rangle.\]
\end{proposition}
\begin{proof}The proof in \cite{Bog96} carries through with very mild modifications, and we summarize the main points.  We have the following purely algebraic fact:
\begin{lemma}\label{isotropic fact}
Let $(H,q)$ be a complex vector space with a nondegenerate quadratic form $q$, and let $A^*$ be a graded quotient of $\Sym^*H$ by a graded ideal $I^*$ such that:
\begin{enumerate}
\item $A^{2n}\neq 0$;
\item $I^*\supset \langle x^{n+1}\mid q(x)=0\rangle$.
\end{enumerate}
Then $I^*=\langle x^{n+1}\mid q(x)=0\rangle$.	
\end{lemma}
Take $(H,q)=(H^2(X,\C),q_X)$ and $A^*=S^*H^2(X,\C)$.  Observe that the first condition in the lemma is met.  Indeed, let $w$ be a generator of the $H^{2,0}$-part of $H^2(X,\C)$.  Since for any resolution $\pi:Y\to X$ we have an injection $\pi^*:H^2(X,\C)\to H^2(Y,\C)$, it follows that $\pi^*w$ is the class of an extension of a symplectic form.  As $(\pi^*w)^n(\xxoverline{\pi^*w})^n\neq 0$, we then have that $w^n\xxoverline{w}^n\neq 0$.

Thus, it remains to verify the second condition.  We have the following:
\begin{lemma}$w^{n+1}=0$.\label{contains L}
\end{lemma}
\begin{proof}For a resolution $\pi:Y\to X$, the map $\pi^*:\mathrm{gr}^W_mH^m(X,\C)\to H^m(Y,\C)$ is injective.  Thus, the $(m,0)$-part of the mixed Hodge structure on $H^m(X,\C)$ is 0 for $m>2n$.
\end{proof}
To finish, just as in \cite{Bog96}, since the period map is an \'etale map of $\Def^\lt(X)$ onto the irreducible quadric $(q_X=0)$ by Proposition \ref{proposition local torelli}, applying Lemma \ref{contains L} to nearby deformations yields $(q_X(x)=0)\subset (x^{n+1}=0)$.
\end{proof}

\subsection{Fujiki relations}
Fujiki \cite[Theorem 4.7]{Fuj87} first established interesting relations between the self intersection of a given cohomology class and powers of the BBF form on symplectic manifolds. It seems that Matsushita \cite[Theorem 1.2]{Mat01}, \cite[Proposition 4.1]{Mat15} was the first to prove the ($k=\dim X$) Fujiki relation in the singular setting. He required the varieties to be projective and to have $\Q$-factorial, terminal singularities only and Schwald extended his statement to projective primitive symplectic varieties in \cite{Sch17}. We need a more general statement for the projectivity criterion in the next paragraph. Generalizing to the Kähler setup is not difficult, basically the existing proofs in the projective case work literally. 

A small argument instead is needed when comparing powers of the BBF form to integration over certain very general homology classes. The first results in this direction in the singular case can be found in \cite[Lemma 2.4]{Mat01}. 
\begin{proposition}[Fujiki relations]\label{proposition fujiki relations}
Let $X$ be a primitive symplectic variety and let $\phi \in \Sym^{k}H^2(X,\Q)^\vee$ which is of type $(-k,-k)$ for all small deformations of $X$.  Then if $k$ is odd we have $\phi=0$, while if $k$ is even there exists a constant $c=c(\phi) \in \Q$ such that $\phi=cq_X^{k/2}$, where $q_X^{k/2}\in\Sym^{k/2}H^2(X,\R)^\vee$ is the symmetrization of $q_X^{\otimes k/2}$.  In particular, for all $\alpha \in H^2(X,\C)$ we have
\begin{equation}\label{eq fujiki relations}
\phi( \alpha^{k}) =c\cdot q_X(\alpha)^{k/2}.
\end{equation}
\end{proposition}

\begin{proof}  Using Proposition \ref{proposition local torelli}, we see that the Mumford--Tate group of $H^2(X',\Z)$ for a very general locally trivial deformation $X'$ of $X$ is $\SO(H^2(X',\Q),q_{X'})$. The representation of $\SO(H^2(X,\Q),q_{X})$ on on $\Sym^kH^2(X,\Q)^\vee$ has no invariants for odd $k$, while for even $k$ the only invariant is $q_X^{k/2}$ up to scaling.
\end{proof}

\begin{corollary}\label{corollary riemann roch via q}  Let $X$ be a primitive symplectic variety.  There is a (unique) polynomial $f_X(t)\in \Q[t]$ such that for any line bundle $L$ on $X$, $\chi(L)=f_X(q_X(\mathrm{c}_1(L)))$ and $f_{X'}=f_{X}$ for any locally trivial deformation $X'$ of $X$.  Moreover, 
 
\end{corollary}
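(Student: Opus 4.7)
The plan is to combine Hirzebruch--Riemann--Roch on a resolution with the Fujiki relations of Proposition \ref{proposition fujiki relations}. I begin by choosing a K\"ahler resolution $\pi:Y\to X$ (which exists by Proposition \ref{proposition kaehler space}(3)); since $X$ has rational singularities by Theorem \ref{theorem basic symplectic}, the projection formula gives $\chi(X,L)=\chi(Y,\pi^*L)$. Applying HRR on $Y$ and then the projection formula yields
\[
\chi(X,L)=\int_Y e^{\pi^*c_1(L)}\td(T_Y)=\sum_{k=0}^{2n}\frac{1}{k!}\,\phi_k\bigl(c_1(L)\bigr),\qquad \phi_k(\alpha):=\int_Y (\pi^*\alpha)^k\cdot\td_{2n-k}(T_Y),
\]
where each $\phi_k$ is naturally an element of $\Sym^k H^2(X,\Q)^\vee$.

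The key step is to verify that every $\phi_k$ is of Hodge type $(-k,-k)$ for every small locally trivial deformation of $X$, so that Proposition \ref{proposition fujiki relations} applies. On a single fiber, $\td_{2n-k}(T_Y)\in H^{2n-k,2n-k}(Y,\Q)$ since Chern classes of a holomorphic bundle are of type $(p,p)$; thus for $\alpha\in H^{p,q}(X)$ with $(p,q)\neq(1,1)$ the integrand $(\pi^*\alpha)^k\cdot\td_{2n-k}(T_Y)$ lies in a Hodge component different from $(2n,2n)$ and hence integrates to zero, giving the required Hodge type of $\phi_k$. To propagate this to all small deformations, I would use a simultaneous resolution $\scrY\to\scrX$ of the universal locally trivial family $f:\scrX\to\Def^\lt(X)$ furnished by Lemma \ref{lemma simultaneous resolution}; the relative Todd class $\td(T_{\scrY/\Def^\lt(X)})$ then defines a flat section of $R^{\bullet}g_*\Q_\scrY$ (with $g:\scrY\to\Def^\lt(X)$) that is pointwise of type $(p,p)$, so that the $\phi_k$ extend to flat sections of $\Sym^k (R^2 f_*\Q)^\vee$ remaining of type $(-k,-k)$ on every fiber. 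Proposition \ref{proposition fujiki relations} then gives $\phi_k=0$ when $k$ is odd and $\phi_k=c_k\,q_X^{k/2}$ for some $c_k\in\Q$ when $k$ is even.

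Substituting back, $\chi(X,L)=\sum_{k\text{ even}}\tfrac{c_k}{k!}\,q_X(c_1(L))^{k/2}$, so $f_X(t):=\sum_{k\text{ even}}\tfrac{c_k}{k!}\,t^{k/2}$ is the desired polynomial of degree at most $n$, uniquely pinned down since the values of $q_X$ on $H^2(X,\Q)$ form an infinite set by its mixed signature. The coefficients $c_k$ are constant on connected components of $\Def^\lt(X)$ by construction, and $q_X$ itself is a locally trivial deformation invariant by Lemma \ref{lemma bbf topological}, so $f_{X'}=f_X$ for any $X'$ in the same connected component. The only non-routine point is the propagation of the Hodge type of $\phi_k$ to nearby deformations; everything else is bookkeeping around HRR and the Fujiki relations once the simultaneous resolution is in hand.
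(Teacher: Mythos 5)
Your proof is correct and takes essentially the same route as the paper's: resolve $X$, use rational singularities and Hirzebruch--Riemann--Roch to write $\chi(L)$ in terms of classes $\phi_k\in\Sym^kH^2(X,\Q)^\vee$, use the simultaneous resolution of the universal locally trivial family to see that the $\phi_k$ are locally constant and of type $(-k,-k)$ on every nearby fiber, and conclude with Proposition \ref{proposition fujiki relations}. The paper's argument is just a terser version of the same steps.
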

\begin{proof}  
As $X$ has rational singularities, for a resolution $\pi:Y\to X$ we have 
\[
\chi(L)=\chi(\pi^* L)=\int_Y\pi^*\ch(L) \mathrm{td}(Y).
\]
 Since $\pi^*:H^2(X,\Q)\to H^2(Y,\Q)$ is an injection of Hodge structures, it follows that 
\[\chi(L)=\sum_k\phi_k( \mathrm{c}_1(L)^k)\]
for Hodge classes $\phi_k\in\Sym^kH^2(X,\Q)^\vee$.  Moreover, from the existence of a simultaneous resolution $\mathscr{Y}\to\mathscr{X}$ of the universal locally trivial deformation $\mathscr{X}$ of $X$, it follows that the $\phi_k$ are locally constant and of type $(-k,-k)$ everywhere.  Now apply the proposition.
\end{proof}

For a compact complex space $W$ of dimension $k$, we denote by $[W]\in H_{2k}(W,\Z)$ the \emph{cycle class}, that is, the sum over the fundamental classes of the irreducible components of dimension $k$ weighted by their multiplicities. We write $\int_{W}:H^{2k}(W,\Z) \to \Z$ for the cap product with the cycle class. Hodge classes as in Proposition \ref{proposition fujiki relations} can be constructed via the following lemma. 

\begin{lemma}  Let $X$ be a primitive symplectic variety and $f:\mathscr{X}\to S$ a locally trivial deformation.  Let $\mathscr{W}\subset \mathscr{X}$ be a closed subvariety that is flat over $S$ with fiberwise dimension $k$.  Then $\int_{\mathscr{W}_s}$ defines a section of $\Sym^k R^2f_*\Q^\vee$ which is of type $(-k,-k)$.
\end{lemma}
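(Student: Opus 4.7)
The plan is to split the argument into constructing the section and verifying its Hodge type. For the section, I will produce a flat section of $(R^{2k}f_*\Q)^\vee$ from the family of cycle classes $[\mathscr{W}_s]$ and then precompose with the cup-product map; for the Hodge type, I will use resolution of singularities of $\mathscr{W}_s$ and strictness of morphisms of mixed Hodge structures.

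First, since $f$ is locally trivial with proper fibers, $R^{2k}f_*\Q$ is a local system on $S$.  To exhibit $s \mapsto [\mathscr{W}_s]$ as a flat section of $(R^{2k}f_*\Q)^\vee$, I would argue locally around any $s_0\in S$ on a small neighborhood $U$ on which $f$ trivializes as $\mathscr{X}_{s_0}\times U\to U$. Flatness of $\mathscr{W}\to S$ implies $\mathscr{W}|_U\subset \mathscr{X}_{s_0}\times U$ has pure dimension $k+\dim U$, so it carries a Borel--Moore cycle class; K\"unneth (using compactness of $\mathscr{X}_{s_0}$ and contractibility of $U$ after shrinking) identifies
\[
H^{BM}_{2(k+\dim U)}(\mathscr{X}_{s_0}\times U,\Q)\;\cong\; H_{2k}(\mathscr{X}_{s_0},\Q),
\]
and restriction to each fiber identifies this constant class with $[\mathscr{W}_s]$. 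Thus $[\mathscr{W}_s]$ is flat in the trivialization. Composing with the dual of the cup-product map (itself a morphism of local systems) $\Sym^k R^2f_*\Q \to R^{2k}f_*\Q$ then yields the desired section of $\Sym^kR^2f_*\Q^\vee$.

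For the Hodge type, fix $s\in S$. Take a resolution of $\mathscr{W}_s$ by a smooth compact K\"ahler $k$-fold $\tilde{\mathscr{W}}_s$, with composition $p:\tilde{\mathscr{W}}_s\to \mathscr{X}_s$; by the definition of the cycle class we have $[\mathscr{W}_s]=p_*[\tilde{\mathscr{W}}_s]$. The class $[\tilde{\mathscr{W}}_s]\in H_{2k}(\tilde{\mathscr{W}}_s,\Q)$ is pure of Hodge type $(-k,-k)$, and proper pushforward is a morphism of mixed Hodge structures of type $(0,0)$.  Since $H_{2k}(\mathscr{X}_s,\Q)$ has weights $\geq -2k$, strictness forces $p_*[\tilde{\mathscr{W}}_s]$ to land in the pure weight $-2k$ piece and to be of Hodge type $(-k,-k)$ there. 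As the image $S^k H^2(\mathscr{X}_s,\Q)$ of $\Sym^kH^2(\mathscr{X}_s,\Q)\to H^{2k}(\mathscr{X}_s,\Q)$ is a quotient of a pure weight $2k$ Hodge structure by Lemma \ref{lemma peternell} and cup product is of Hodge type $(0,0)$, pairing against $[\mathscr{W}_s]$ produces a functional of Hodge type $(-k,-k)$ on $\Sym^kH^2(\mathscr{X}_s,\Q)$, as required.

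The main technical point I expect is the flatness of the cycle-class section, i.e.\ that a flat family of pure-dimensional cycles in a locally trivial family has locally constant cohomological class. The Borel--Moore K\"unneth argument above makes this transparent; alternatively, one could reduce to $S$ smooth by pulling back to a resolution. The Hodge-type verification is then a formal consequence of the strictness of morphisms of mixed Hodge structures on $H_{2k}$, combined with the fact that $H^2$ of a primitive symplectic variety carries a pure Hodge structure.
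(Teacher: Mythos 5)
Your proof is correct and follows essentially the same route as the paper: the paper's entire argument is to reduce to showing that the cycle class $[\mathscr{W}_u]$ is locally constant in Borel--Moore homology $H^{\BM}_{2k}(f^{-1}(U),\Q)$ for small $U\subset S$, citing \cite[Lemma~19.1.3]{Ful98} for that constancy (which is exactly your K\"unneth/specialization step, and where your fallback of reducing to a smooth, e.g.\ one-dimensional, base is indeed the standard fix when $S$ is singular). The Hodge-type verification via a K\"ahler resolution of $\mathscr{W}_s$ and purity of $H^2$ is left implicit in the paper, so spelling it out as you do is a harmless elaboration rather than a departure.
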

\begin{proof} It suffices to show that for any sufficiently small Euclidean open set $U\subset S$, the cycle class $[\mathscr{W}_u]$ is constant in Borel--Moore homology $H^\BM_{2k}(f^{-1}(U),\Q)$. This is done as in \cite[Lemma~19.1.3]{Ful98}.
\end{proof}

For the following corollary, the term \emph{very general} is to be interpreted in terms of locally trivial deformations, i.e., outside a countable union of proper subvarieties in the base of the locally trivial Kuranishi family.

\begin{corollary}\label{corollary no odd subvarieties} Let $X$ be a very general primitive symplectic variety. Then $X$ does not contain odd dimensional closed subvarieties.
\end{corollary}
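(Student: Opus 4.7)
The plan is to combine the countability of irreducible components of the relative Douady space with the Mumford--Tate argument from the proof of Proposition \ref{proposition fujiki relations}, and then derive a contradiction via positivity of K\"ahler classes. Let $f: \scrX \to S := \Def^\lt(X_0)$ be the universal locally trivial deformation of some primitive symplectic variety $X_0$. For each odd $k$, let $\{T_{\alpha,k}\}_{\alpha}$ be the countable collection of irreducible components of the relative Douady space of $f$ whose generic point parametrizes a purely $k$-dimensional subvariety of a fiber; each $T_{\alpha,k}$ carries a universal flat family $\scrW_{\alpha,k} \to T_{\alpha,k}$ of such subvarieties and a projection $\pi_{\alpha,k}: T_{\alpha,k} \to S$. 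I call $X = \scrX_s$ \emph{very general} if $s$ avoids all the (countably many) images $\pi_{\alpha,k}(T_{\alpha,k})$ for which $\pi_{\alpha,k}$ is not surjective; each such image is contained in a proper analytic subvariety of $S$, so the very general locus is the complement of a countable union of proper analytic subvarieties.

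Now suppose for contradiction that such a very general $X = \scrX_s$ contains a closed subvariety $W$ of odd dimension $k$. Then $[W]$ lies in some $T_\alpha := T_{\alpha,k}$, and by our genericity assumption the corresponding $\pi_\alpha$ is surjective. By the lemma immediately preceding the corollary, the family $\scrW_\alpha \to T_\alpha$ gives rise to a flat section $\phi_\alpha$ of the local system $\pi_\alpha^* \Sym^k R^2 f_* \Q^\vee$ with $\phi_{\alpha, t}$ of type $(-k,-k)$ at every $t \in T_\alpha$. By the Mumford--Tate genericity statement used in the proof of Proposition \ref{proposition fujiki relations} (which rests on Proposition \ref{proposition local torelli}), the locus in $S$ where $H^2(\scrX_s, \Q)$ has Mumford--Tate group equal to $\SO(H^2(\scrX_s, \Q), q_{\scrX_s})$ is the complement of a countable union of proper analytic subvarieties; surjectivity of $\pi_\alpha$ then produces a point $t_0 \in T_\alpha$ with $s_0 := \pi_\alpha(t_0)$ Mumford--Tate generic. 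At $t_0$ the class $\phi_{\alpha, t_0} \in \Sym^k H^2(\scrX_{s_0}, \Q)^\vee$ is Hodge, hence $\SO$-invariant, but this representation has no nonzero invariants in odd symmetric degree; therefore $\phi_{\alpha, t_0} = 0$. Since $\phi_\alpha$ is a flat section of a local system on the irreducible (hence connected) space $T_\alpha$, it follows that $\phi_\alpha \equiv 0$ on $T_\alpha$; in particular $\phi_{\alpha, [W]}$, which is the restriction of the integration functional $\int_W$ to $\Sym^k H^2(X, \Q)$, vanishes.

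To close the argument I invoke positivity: $X$ is K\"ahler by the definition of a primitive symplectic variety, so there is a K\"ahler class $\omega \in H^2(X, \R)$. By Proposition \ref{proposition kaehler space} the subspace $W$ is itself K\"ahler and $\omega|_W$ is a K\"ahler class, so $\int_W \omega^k > 0$ by the standard positivity of the top self-intersection of a K\"ahler class on a $k$-dimensional compact K\"ahler space. Since $\omega^k$ lies in the image of the cup product map $\Sym^k H^2(X, \R) \to H^{2k}(X, \R)$, this contradicts the vanishing of $\phi_{\alpha, [W]}$ established above, completing the proof. The main obstacle I anticipate is the careful bookkeeping of the analytic relative Douady space---in particular verifying that the non-surjective components contribute only to a countable union of proper analytic subvarieties of $S$; once this framework is in place, the Hodge-theoretic heart of the argument follows directly from the preceding lemma and the Mumford--Tate computation already present in the proof of Proposition \ref{proposition fujiki relations}.
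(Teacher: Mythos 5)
Your proposal is correct and follows essentially the same route as the paper: the paper's (much terser) proof invokes the lemma on cycle classes to get a Hodge class $\phi=\int_W\in\Sym^kH^2(X,\Q)^\vee$, applies Proposition \ref{proposition fujiki relations} to conclude $\phi=0$ for odd $k$, and contradicts this with $\int_W\omega^k>0$ for a K\"ahler class $\omega$. Your write-up merely makes explicit the relative Douady-space bookkeeping behind the phrase ``very general'' and re-derives the odd-degree vanishing via the Mumford--Tate argument already contained in the proof of Proposition \ref{proposition fujiki relations}.
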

\begin{proof}  By the lemma, for a $k$-dimensional subvariety $W$ we have a Hodge class $\phi=\int_W$ in $\Sym^kH^2(X,\Q)^\vee$.  By taking a K\"ahler class $\omega\in H^2(X,\R)$, we see that $\int_W\omega^k>0$ and thus $\phi$ is nonzero, a contradiction.  \end{proof}

\subsection{\texorpdfstring{$\Q$}{Q}-factoriality and \texorpdfstring{$\Q$}{Q}-factorial terminalizations}\label{sect Qfact}
We first deduce the invariance of $\Q$-factoriality under locally trivial deformations for primitive symplectic varieties.

\begin{lemma}\label{lemma deformation qfactorial}
Let $X$ be a primitive symplectic variety. Then every small locally trivial deformation of $X$ is $\Q$-factorial if and only if $X$ is $\Q$-factorial.
\end{lemma}
\begin{proof}
Let $\pi:Y\to X$ be a resolution and consider $H^2(X,\Q)\subset H^2(Y,\Q)$ via pullback.  
Using Lemma~\ref{lemma simultaneous resolution}, we choose a simultaneous resolution $\scrY\to\scrX$ of the universal locally trivial deformation $\scrX\to\Def^\lt(X)$. Recall that by Proposition~\ref{proposition local torelli}, we can think of $\Def^\lt(X)$ as an open subset of the local period domain $\Omega(X)$.

For an element $\lambda\in H^2(X,\Q)$ with $q_X(\lambda)> 0$, let $T_\lambda\subset \Omega(X)$ be the locus for which $\lambda\in H^{2,0}(X)\oplus H^{0,2}(X)$.  Note that $T_\lambda$ is a totally real half-dimensional closed subvariety of $\Omega(X)$ (see Section~\ref{subsection orbits}).  We first claim that we may choose $\lambda$ so that $T_\lambda$ meets the image of $\Def^\lt(X)$.  Indeed, note that $q_X(\sigma)=0$ is equivalent to $q_X(\mathrm{Re}(\sigma))=q_X(\mathrm{Im}(\sigma))$ and $q_X(\mathrm{Re}(\sigma),\mathrm{Im}(\sigma))=0$.  Thus, taking $\lambda$ to be a rational class sufficiently close to $\mathrm{Re}(\sigma)$, then taking $R=\lambda$ and $I$ to be the projection of $\mathrm{Im}(\sigma)$ to $R^\perp$ scaled so that $q_X(R)=q_X(I)$, we can make $\sigma'=R+iI\in T_\lambda$ arbitrarily close to $\sigma$.

Now, choosing such a $\lambda$, in the notation of Lemma~\ref{lemma bbf topological} we have that $v(\lambda)\neq0$ by Proposition \ref{proposition fujiki relations}. Observe that the Fujiki constant is nonzero since $q_X(\sigma+\bar\sigma)\neq 0\neq v(\sigma+\bar\sigma)$.  Define a quadratic form $Q_\lambda$ on $H^2(Y,\Q)$ by the right-hand side of \eqref{eq integral form} divided by $q_{Y,\sigma}(\lambda)=q_{X,\sigma}(\lambda)$.  Note that:
\begin{enumerate}
\item\label{first part} $Q_\lambda$ is rational;
\item\label{second part} $Q_\lambda$ restricts to (a nonzero multiple of) $q_X$ on $H^2(X,\Q)$;
\item\label{third part} If $\lambda\in H^{2,0}(X)\oplus H^{0,2}(X)$ then
\[v(\lambda)^2q_{Y,\sigma}(\alpha) = q_{X,\sigma}(\lambda)Q_\lambda(\alpha)\]
for all $\alpha\in H^2(Y,\C)$, as in \cite[Théorème 5 (c)]{Bea83}.
\end{enumerate}
We now claim that $Q_\lambda$ is a morphism of Hodge structures. For this, we consider $Q_\lambda$ as a quadratic from on the local system of weight two Hodge structures associated to $\scrY \to \Def^\lt(X) \subset \Omega(X)$. In view of \eqref{first part}, it suffices to show that $Q_\lambda$ is a morphism of $\R$-Hodge structures. By \eqref{third part} and Lemma~\ref{lemma signature bbf form}, this is the case for all periods in $T_\lambda\cap \Def^\lt(X)$ and this set is nonempty and open in $T_\lambda$ by the above. But the Hodge locus of $Q_\lambda$ is certainly an analytic subset of $\Def^\lt(X)$ and therefore must be all of $\Def^\lt(X)$ as $T_\lambda$ is totally real and $\dim_\R T_\lambda=\dim_\C\Def^\lt(X)$.

Now, $q_X$ is nondegenerate by Lemma \ref{lemma signature bbf form}, so by \eqref{second part} the $Q_\lambda$-orthogonal space 
$$H^2(X,\Q)^\perp\subset H^2(Y,\Q)$$
 is a rational complement to $H^2(X,\Q)$ and is Hodge--Tate.  Thus, condition \eqref{eq weaker condition} is equivalent to \eqref{eq qfactoriality characterization}.  Using Lemma~\ref{lemma simultaneous resolution} again, we see that the validity of \eqref{eq qfactoriality characterization} is clearly invariant under locally trivial deformations.  We therefore conclude by Proposition~\ref{proposition analytic qfactoriality}.
\end{proof}

The rest of this section will be devoted to relating the locally trivial deformation theory of a projective primitive symplectic variety $X$ to that of a $\Q$-factorial terminalization, which will play a role in the proof of surjectivity of the period map. We start with the following slight generalization of \cite[Lemma 3.5]{BL16}. The proof is literally the same as in loc. cit. so we omit it here.

\begin{lemma}\label{lemma symplektisch}
Let $\pi:Y \to X$ be a proper bimeromorphic morphism between primitive symplectic varieties. Then $\pi^*:H^2(X,\C) \to H^2(Y,\C)$ is injective and the restriction of $q_Y$ to $H^2(X,\C)$ is equal to $q_X$. We have an orthogonal decomposition
\begin{equation}\label{eq n}
H^2(Y,\Q)=\pi^* H^2(X,\Q)\oplus N_\Q 
\end{equation}
where $N:=\tilde q^{-1}_{Y}(N_1(Y/X))$, which is negative definite.\qed
\end{lemma}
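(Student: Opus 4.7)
The plan is to prove the four assertions in turn, following the strategy of \cite[Lemma 3.4]{BL16} but being careful that here both $X$ and $Y$ may be singular.

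For injectivity of $\pi^*$, I would pass to a common resolution. Pick any resolution $\tau:W\to Y$; then $\pi\circ\tau:W\to X$ is a resolution of $X$, which has rational singularities by Theorem~\ref{theorem basic symplectic}. Lemma~\ref{lemma peternell} then shows that $(\pi\circ\tau)^*:H^2(X,\Z)\to H^2(W,\Z)$ is injective, and since this factors through $H^2(Y,\Z)$, the map $\pi^*:H^2(X,\Z)\to H^2(Y,\Z)$ is injective; complexifying gives injectivity of $\pi^*$ on $H^2(X,\C)$.

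For compatibility of the BBF forms, I would exploit the uniqueness of the symplectic forms up to scalar: normalize so that $\pi^*\sigma_X=\sigma_Y$ on $Y^\reg$. Since $\pi$ is bimeromorphic, $\pi_*[Y]=[X]$ and the projection formula yields $\int_Y\pi^*\eta=\int_X\eta$ for any top-degree class $\eta$; in particular $\int_Y(\sigma_Y\bar\sigma_Y)^n=\int_X(\sigma_X\bar\sigma_X)^n$, so the normalization transfers from $X$ to $Y$. Applying the projection formula term-by-term to formula~\eqref{eq bbf} for $q_Y(\pi^*\alpha)$ and using $\sigma_Y=\pi^*\sigma_X$ then gives $q_Y(\pi^*\alpha)=q_X(\alpha)$ for all $\alpha\in H^2(X,\C)$. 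Non-degeneracy of $q_X$ then forces $q_Y$ to restrict to a non-degenerate form on $\pi^*H^2(X,\Q)$, so we get an orthogonal decomposition $H^2(Y,\Q)=\pi^*H^2(X,\Q)\oplus M$ where $M$ is the $q_Y$-orthogonal complement.

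To identify $M$ with $N_\Q$, I would argue that $\beta\in M$ iff the functional $\tilde q_Y(\beta)\in H^2(Y,\Q)^*$ vanishes on $\pi^*H^2(X,\Q)$. By Lemma~\ref{lemma peternell} (applied to a common resolution and descended to $Y$, using that $Y$ has rational singularities), $\pi^*H^2(X,\Q)$ is characterized on its $(1,1)$-part as the subspace of classes pairing trivially with $\pi$-exceptional curves; consequently such annihilating functionals are exactly the images of elements of $N_1(Y/X)$, yielding $M=\tilde q_Y^{-1}(N_1(Y/X))=N_\Q$. Negative definiteness of $N_\Q$ is then immediate from signature bookkeeping: Lemma~\ref{lemma signature bbf form} gives signatures $(3,b_2(X)-3)$ for $q_X$ and $(3,b_2(Y)-3)$ for $q_Y$, and since $\pi^*H^2(X,\Q)$ already contains a maximal positive $3$-plane for $q_Y$, the orthogonal complement $N_\Q$ must be negative definite. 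The main obstacle I anticipate is the identification $M=N_\Q$, since Lemma~\ref{lemma peternell} is stated for a map with smooth source; I would handle this by working on a common resolution $W$ and descending the resulting characterization, using that $N_1(Y/X)_\Q$ is spanned by pushforwards of exceptional curves on $W$.
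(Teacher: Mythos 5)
Your proposal is correct and follows essentially the same route as the paper, which omits the proof by noting it is literally that of \cite[Lemma 3.4]{BL16}: injectivity via Lemma~\ref{lemma peternell} applied to a resolution factoring through $Y$, compatibility of the forms via $\pi^*\sigma_X=\sigma_Y$ and the projection formula in \eqref{eq bbf}, identification of the orthogonal complement with $N_\Q$ via the characterization of $\pi^*H^{1,1}(X)$ as the classes killing contracted curves, and negative definiteness by comparing the signatures $(3,b_2(X)-3)$ and $(3,b_2(Y)-3)$ from Lemma~\ref{lemma signature bbf form}. The one point where you must add something beyond \cite{BL16} --- descending the exceptional-curve characterization from a resolution $W$ to the singular $Y$ --- is exactly the ``slight generalization'' the paper alludes to, and your proposed handling of it is sound.
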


Let $X$, $Y$ be normal compact complex varieties with rational singularities and let $\pi:Y \to X$ be a proper bimeromorphic morphism. It follows that $\pi_*\sO_Y = \sO_X$ and $R^1\pi_*\sO_Y=0$ so that 
by \cite[Proposition 11.4]{KM92}, there is a commutative diagram 
\begin{equation}
\label{eq defo diag}
\xymatrix{
\scrY \ar[d]\ar[r]^{P}& \scrX \ar[d]\\
\Def(Y) \ar[r]^{p} & \Def(X) \\
}
\end{equation}
for the miniversal families of deformations of $X$ and $Y$. Let us consider the case that $\pi:Y\to X$ is a $\Q$-factorial terminalization of a projective primitive symplectic variety. We will show below (Proposition \ref{prop defo}) that the locally trivial deformations of $X$ are identified via $p$ with the locus of deformations of $Y$ where the classes of contracted curves remain Hodge.

\begin{proposition}\label{prop defo}
Let $X$ and $Y$ be projective primitive symplectic varieties and let $\pi: Y\to X$ be a proper bimeromorphic morphism. Assume $Y$ is $\Q$-factorial and terminal. Let $N\subset H^2(Y,\C)$ be the $q_Y$-orthogonal complement to $H^2(X,\C) \subset H^2(Y,\C)$ and consider the diagram \eqref{eq defo diag}. Denote by $\Def(Y,N)\subset \Def(Y)$ the subspace of deformations such that classes in $N$ remain of type $(1,1)$. Then the following holds:
\begin{enumerate}
	\item $p^{-1}(\Def^\lt(X)) = \Def(Y,N)\subset \Def(Y)$.
	\item The restriction $p:\Def(Y,N)\to\Def^\lt(X)$ is an isomorphism.
\end{enumerate}
\end{proposition}
\begin{proof}
By Theorem \ref{theorem deflt is smooth} respectively \cite[Main~Theorem]{Nam06}, the spaces $\Def^\lt(X)$ and $\Def(Y)$ are smooth of dimension $h^{1,1}(X)$ and $h^{1,1}(Y)$, respectively. Moreover, by \cite[Theorem 1]{Nam06}, $\Def(X)$ is smooth while $p:\Def(Y)\to\Def(X)$ is finite and, as both are of the same dimension, surjective.

Now, $\Def(Y,N)\subset \Def(Y)$ is a smooth subvariety of codimension $m:=\dim N$ whose tangent space is identified with $H^{1,1}(X)$ under the period map, see Lemma~\ref{lemma deformations line bundle}. By Corollary~\ref{cor deformation symplectic}, the fibers of the universal deformations $\scrY \to \Def(Y)$ and $\scrX \to \Def^\lt(X)$ are primitive symplectic varieties. Therefore, \cite[Lemma 2.2]{BL16} entails that the second cohomology of locally trivial deformations of $X$ form a vector bundle on $\Def^\lt(X)$, in particular, $h^{1,1}(\scrX_{p(t)})=h^{1,1}(X)$. Thus, by the decomposition $H^2(Y,\C)=N\oplus H^2(X,\C)$ from Lemma \ref{lemma symplektisch} we see that the space $N_1(\scrY_t/\scrX_{p(t)})$ of curves contracted by $P_t:\scrY_t \to \scrX_t$ has dimension $m$ for all $t\in p^{-1}(\Def^\lt(X))$. As $N$ is the orthogonal complement of $H^2(X,\C)$, it also  varies in a local system. Using the period map this shows that $p^{-1}(\Def^\lt(X)) = \Def(Y,N)$. 

One shows as in \cite[Proposition 2.3 (ii)]{LP16} that $p$ is an isomorphism, see also \cite[Proposition 4.5]{BL16}.
\end{proof}

We will need the following corollary in Section~\ref{section torelli}. For a projective primitive symplectic variety $X$ and a $\Q$-factorial terminalization $\pi:Y\to X$, let $g:\scrY\to\Def(Y)$ and $f:\scrX\to\Def(X)$ be the universal deformations, and let $f':\scrX'\to\Def(Y)$ be the pullback of $\scrX$ to $\Def(Y)$ along $p$ as in \eqref{eq defo diag}.  Then $P':\scrY\to\scrX'$ is a simultaneous $\Q$-factorial terminalization by \cite[Main~Theorem]{Nam06} and Lemma~\ref{lemma deformation qfactorial}.  Consider the constant second Betti number locus
\[B_X:=\{t\in \Def(Y)\mid \rk (R^2f'_*\Q_{\scrX'})_t=b_2(X)\}\]
which is a (reduced) closed analytic subspace of $\Def(Y)$.
\begin{corollary}\label{corollary betti locus}In the above setup, $B_X=\Def(Y,N)$.
\end{corollary}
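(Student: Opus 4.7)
The plan is to prove both inclusions separately. For $\Def(Y,N) \subseteq B_X$: by Proposition \ref{prop defo}, $\Def(Y,N) = p^{-1}(\Def^\lt(X))$, so for $t \in \Def(Y,N)$ the fiber $\scrX'_t \cong \scrX_{p(t)}$ is a locally trivial deformation of $X$ and is therefore homeomorphic to $X$, giving $b_2(\scrX'_t) = b_2(X)$.

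For the reverse inclusion $B_X \subseteq \Def(Y,N)$, I plan to study the morphism $(P')^* : R^2 f'_* \Q \to R^2 g_* \Q$ of constructible sheaves on $\Def(Y)$ induced by the simultaneous $\Q$-factorial terminalization $P'$. Since $g : \scrY \to \Def(Y)$ is smooth and proper with $\Def(Y)$ a smooth (in particular contractible) germ, the target is the trivial local system with fiber $H^2(Y,\Q)$. By Lemma \ref{lemma peternell}, applied fiberwise using that $\scrX'_t$ has rational singularities, the map is fiberwise injective. Near $0 \in B_X$ the source sheaf has constant fiber rank $b_2(X)$ and restricts to a local system on the connected component of $0$, so its image under $(P')^*$ is a local sub-system of rank $b_2(X)$ inside the trivial local system $R^2 g_* \Q$. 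Any local sub-system of a trivial local system over a connected base is itself constant, so this image is determined by its stalk at $0$, which is $\pi^* H^2(X,\Q) = H^2(X,\Q) \subset H^2(Y,\Q)$.

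Thus for every $t \in B_X$ near $0$, the identification $H^2(\scrY_t,\Q) \cong H^2(Y,\Q)$ afforded by the local system $R^2 g_* \Q$ carries $(P_t)^* H^2(\scrX'_t,\Q)$ onto $H^2(X,\Q)$. Applying Lemma \ref{lemma symplektisch} to $P_t : \scrY_t \to \scrX'_t$ (both primitive symplectic by Corollary \ref{cor deformation symplectic} and Namikawa's deformation theory for $\scrX'_t$) yields the orthogonal decomposition $H^2(\scrY_t,\Q) = (P_t)^* H^2(\scrX'_t,\Q) \oplus N_t$ with $N_t$ negative definite and contained in $H^{1,1}(\scrY_t,\R)$. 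Using that the BBF form is topologically invariant (Lemma \ref{lemma bbf topological}), $N_t$ coincides with the fixed subspace $N \subset H^2(Y,\Q)$, which is by definition the $q_Y$-orthogonal complement of $H^2(X,\Q)$. Hence $N \subseteq H^{1,1}(\scrY_t,\R)$, i.e., $t \in \Def(Y,N)$.

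The main technical obstacle I anticipate is the constructibility step: justifying that on the connected component of $0$ in $B_X$ the image sheaf of $(P')^*$ is genuinely a local sub-system, which requires combining the constant rank condition defining $B_X$, fiberwise injectivity from Lemma \ref{lemma peternell}, and the constructibility of direct images under proper morphisms. Once this sub-system structure is in place, the rest of the argument is essentially an identification of orthogonal complements via the topological invariance of the BBF form.
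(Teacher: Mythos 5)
Your proposal is correct and follows essentially the same route as the paper: both directions are handled identically, with the paper tracking the orthogonal complement $(R^2f'_*\Q_{\scrX'})|_{B_X}^\perp$ inside the constant system $R^2g_*\Q_{\scrY}$ rather than the image of $(P')^*$, and concluding from its constancy and Hodge--Tateness that $N$ stays of type $(1,1)$ on $B_X$. The ``technical obstacle'' you flag (that constant stalk rank plus fiberwise injectivity into a local system forces $R^2f'_*\Q_{\scrX'}|_{B_X}$ to be a local sub-system) is exactly the step the paper asserts without elaboration, so your treatment is if anything slightly more careful.
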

\begin{proof}
Certainly $B_X\supset \Def(Y,N)$ by the proposition and \cite[Lemma 2.4]{BL16}.  By Lemma \ref{lemma symplektisch} and proper base change we have an injection 
\[0\to (R^2f'_*\Q_{\scrX'})_t\to (R^2g_*\Q_{\scrY})_t\]
for all $t\in \Def(Y)$.  The restrictions $(R^2g_*\Q_{\scrX'})|_{B_X}$ and $(R^2f'_*\Q_{\scrY})|_{B_X}$ are local systems as therefore is the orthogonal $(R^2f'_*\Q_{\scrX'})|_{B_X}^\perp$ in $(R^2f'_*\Q_{\scrY})|_{B_X}$.  We must then have $(R^2f'_*\Q_{\scrX'})_t^\perp=N$ for all $t\in B_X$, but since $(R^2f'_*\Q_{\scrX'})_t^\perp$ is Hodge--Tate we obtain the reverse inclusion $B_X\subset \Def(Y,N)$.
\end{proof}

\section{The projectivity criterion}\label{section projectivity criterion}

In this section we formulate and prove an analog of Huybrechts' projectivity criterion \cite[Theorem 3.11]{Huy99} (see also \cite{Huy03erratum}) in the singular setup. Note that for orbifold singularities, the question has been examined by Menet \cite{Men20}. We use several of his as well as of Huybrechts' arguments.

\subsection{A singular version of the Demailly--P\u aun Theorem}\label{section demailly paun}

We do not know whether the analog of Demailly--P\u aun's celebrated theorem \cite[Main Theorem~0.1]{DP04} on the numerical characterization of the Kähler cone of a compact Kähler manifold holds for singular varieties. One may however easily deduce from it that a similar statement holds which is good enough for our purposes. For this purpose, we introduce a notion for cohomology classes that behave as if they were Kähler classes. 

Recall from \eqref{eq real hodge classes} that we defined $H^{1,1}(X,\R)=F^1H^2(X,\C)\cap H^2(X,\R)$ for a reduced compact complex space of class $\scrC$.

\begin{definition}\label{definition demailly paun}
Let $X$ be a reduced compact complex space of class $\scrC$ and consider a class $\kappa \in H^{1,1}(X,\R)$. We say that $\kappa$ is \emph{Demailly--P\u aun} if for every compact complex manifold $V$ and for every generically finite morphism $f:V \to X$ the class $f^*\kappa$ is big and nef. We denote by $\DP(X)\subset H^{1,1}(X,\R)$ the convex cone consisting of all Demailly--P\u aun classes. We refer to it as the \emph{Demailly--P\u aun cone}.
\end{definition}

This definition desserves a couple of comments.

\begin{remark}\label{remark demailly paun}\
\begin{enumerate}
\item Every Kähler class is Demailly--P\u aun, in particular, $\DP(X) \neq \emptyset$ if $X$ is Kähler. Indeed, every Kähler class is a $(1,1)$-class by Proposition~\ref{proposition kaehler is 11}. Then the claim follows as the pullback of a Kähler class under a generically finite morphism from a smooth variety is big and nef.
	\item We do not know of an example of a class that is Demailly--P\u aun but not Kähler. It seems likely that Demailly--P\u aun classes are the same as Kähler classes. Apart from the Demailly--P\u aun theorem \cite[Main Theorem~0.1]{DP04}, evidence for this presumption is given in \eqref*{remark demailly paun item three}.
	\item\label{remark demailly paun item three} Every rational Demailly--P\u aun class is Kähler. Indeed, a multiple of such a class is the first Chern class of a big line bundle $L$. Therefore, $X$ is Moishezon and $L$ is ample by the Nakai--Moishezon criterion. Note that the Nakai--Moishezon criterion holds for big line bundles on Moishezon varieties, see e.g. \cite[3.11~Theorem]{Kol90}.
\item A closed subvariety of a class $\scrC$ variety is again dominated by a compact Kähler manifold, see Proposition~\ref{proposition kaehler space}, and so it is itself class $\scrC$. Then it is immediate that the restriction of a Demailly--P\u aun class to a subvariety is again Demailly--P\u aun.
\item The assumption that $X$ be of class $\scrC$ is somewhat redundant but simplifies the exposition. If for some $\kappa \in H^{1,1}_\ddbar(X,\R)$ the pullback $\pi^*\kappa$ along a resolution $\pi:Y \to X$ is big, then $Y$ (and hence also $X$) are of class $\scrC$ by \cite[Theorem~0.7]{DP04}.
\end{enumerate}
\end{remark}

\begin{lemma}\label{lemma demailly paun}
Let $X$ be a compact variety of class $\scrC$ and let $\kappa \in H^{1,1}(X,\R)$. Then $\kappa$ is Demailly--P\u aun if and only if for every compact complex manifold $W$ and for every holomorphic map $\pi:W\to X$ which is bimeromorphic onto its image the class $\pi^*\kappa$ is big and nef. Moreover, the pullback of a Demailly--P\u aun class to an arbitrary compact complex manifold is nef.
\end{lemma}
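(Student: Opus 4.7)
My plan is to reduce both parts of the lemma to standard compatibilities of bigness and nefness with pullback along generically finite surjective morphisms and with descent along bimeromorphic morphisms between compact complex manifolds. The forward direction of the equivalence will be immediate from Definition~\ref{definition demailly paun}, since a morphism $\pi: W \to X$ that is bimeromorphic onto its image is \emph{a fortiori} generically finite onto its image.

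For the reverse direction, I will take a generically finite morphism $f: V \to X$ from a compact complex manifold, set $Z := f(V)$, and choose a resolution $\rho: Z' \to Z$ with $Z'$ a compact complex manifold (which exists since $Z$ inherits class $\scrC$ from $X$ by Proposition~\ref{proposition kaehler space}). Resolving the indeterminacy of the induced meromorphic map $V \dashrightarrow Z'$ via Hironaka will yield a bimeromorphic sequence of blowups $\sigma: \tilde V \to V$ together with a morphism $\tilde f: \tilde V \to Z'$ satisfying $\rho \circ \tilde f = f \circ \sigma$; properness and bimeromorphicity of $\rho$ combined with $f(V) = \rho(Z')$ will force $\tilde f$ to be surjective, and it is clearly generically finite. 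Since $Z' \to X$ is bimeromorphic onto its image, the hypothesis will give that the pullback of $\kappa$ to $Z'$ is big and nef; pullback along the generically finite surjective $\tilde f$ preserves this, so $\sigma^* f^*\kappa$ will be big and nef on $\tilde V$. Finally, descent of bigness and nefness along the bimeromorphic $\sigma$ between compact complex manifolds will yield $f^*\kappa$ big and nef on $V$.

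For the \emph{moreover} claim, given a holomorphic map $g: W \to X$ from a compact complex manifold $W$, I first note that if $g$ is generically finite onto its image, then $g^*\kappa$ is big and nef---hence nef---by what has already been proved. If instead $g$ is not generically finite onto its image, I put $Z = g(W)$, choose a resolution $\rho: Z' \to Z$, and as before produce a bimeromorphic $\sigma: \tilde W \to W$ together with $\tilde g: \tilde W \to Z'$ such that $\rho \circ \tilde g = g \circ \sigma$. The first part of the lemma will give $\rho^* \kappa$ big and nef on $Z'$; pullback of a nef class along an arbitrary holomorphic map between compact complex manifolds remains nef (argue via smooth representatives $\eta_\varepsilon \geq -\varepsilon\, \omega_{Z'}$ together with a bound $\tilde g^* \omega_{Z'} \leq C\, \omega_{\tilde W}$ valid by compactness of $\tilde W$). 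Hence $\sigma^* g^*\kappa$ will be nef on $\tilde W$, and descent of nefness along the bimeromorphic $\sigma$ will yield $g^*\kappa$ nef on $W$.

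The hardest part I anticipate will be justifying the descent of bigness and nefness along a bimeromorphic morphism between compact complex manifolds in the analytic category. In the projective case this is immediate from the projection formula applied to curves, but in the K\"ahler/class $\scrC$ setting one needs a currents-based argument or the citation of results of Demailly or Boucksom. The remaining ingredients---pullback of big-and-nef along generically finite surjective maps, and pullback of nef along arbitrary holomorphic maps---should be routine to verify directly from the definitions recalled in Section~\ref{section ddbar cohomology}.
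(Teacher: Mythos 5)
Your proposal is correct and follows essentially the same route as the paper's proof: factor the generically finite map through a resolution of its image, apply the hypothesis to that resolution, pull back, and descend along a bimeromorphic modification of the source. The descent of nefness that you flag as the hardest remaining step is exactly what the paper supplies by citing P\u{a}un's result \cite[Théorème 1]{Pau98} (nefness is preserved and detected by pullback along surjective morphisms of compact complex manifolds), which the paper also invokes for the \emph{moreover} claim where you instead argue directly with smooth representatives.
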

\begin{proof}
To prove the non-trivial direction of the first claim, let $\pi:V \to X$ be a holomorphic map from a compact complex manifold which is generically finite onto its image. We denote $\bar V:= \pi(V)$ and factor $\pi$ as $V \to[\pi_1] \bar V \to[\pi_2] X$. We then chose a diagram
\[
 \xymatrix{
 W_2 \ar[r]^\phi \ar[d]_f & V \ar[dr]^{\pi} \ar[d]_{\pi_1} &\\
 W_1 \ar[r]^\psi& \bar V \ar[r]^{\pi_2} & X\\
 }
\]
where $W_1, W_2$ are compact Kähler manifolds and $W_1 \to \bar V$, $W_2\to V$ are bimeromorphic. By assumption, $\alpha:=\psi^*\pi_2^*\kappa$ is big and nef. By a result of P\u aun \cite[Théorème~1]{Pau98}, nefness of $\alpha$ is equivalent to $f^*\alpha$ being nef. Bigness is preserved under generically finite pullbacks so that $f^*\alpha$ is big and nef. Since $W_2 \to V$ is bimeromorphic between compact complex manifolds, $\pi^*\kappa$ is big and nef as $\phi^*\pi^*\kappa=f^*\alpha$ is.

For the second statement, let $\pi:V \to X$ be a morphism from a compact complex manifold. We change the above diagram accordingly and deduce the claim by invoking P\u aun's result once more.
\end{proof}

The main result of this section is deduced from the smooth Demailly--P\u aun theorem and P\u aun's results in \cite{Pau98} via an inductive argument. Note that while our result is not essentially new compared to the Demailly--P\u aun theorem, it should be mentioned that Collins--Tosatti proved a true generalization of the Demailly--P\u aun theorem in \cite[Theorem 1.1]{CT16} for possibly singular compact subvarieties of Kähler manifolds. 

\begin{theorem}\label{theorem demailly paun}
Let $X$ be a reduced compact complex space of class $\scrC$ and consider the cone $P\subset H^{1,1}(X,\R)$ of all classes $\alpha$ on $X$ such that for all closed analytic subsets $V \subset X$  we have
\[
\int_V\alpha^{\dim V} > 0.
\]
Then the Demailly--P\u aun cone $\DP(X)$ is empty or a connected component of $P$. If $X$ is Kähler, $\DP(X)$ is the connected component of $P$ containing the Kähler cone. 
\end{theorem}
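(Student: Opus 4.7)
My plan is to identify $\DP(X)$ with a single connected component of $P$ by establishing three things: $\DP(X) \subset P$, $\DP(X)$ is convex (hence connected), and $\DP(X)$ is both open and closed in $P$. For the inclusion, take $\alpha \in \DP(X)$ and an irreducible $V \subset X$ of dimension $k$; a K\"ahler resolution $\pi_V : \tilde V \to V$ exists since $V$ lies in class $\scrC$, and by definition $\pi_V^*\alpha$ is big and nef on $\tilde V$, so $\int_V \alpha^k = \int_{\tilde V}(\pi_V^*\alpha)^k \geq \mathrm{vol}(\pi_V^*\alpha) > 0$. Convexity is immediate: pullback is linear and sums of big (respectively nef) classes are big (respectively nef), so $\DP(X)$ is a convex cone; in particular it is connected and lies in a unique connected component $C$ of $P$. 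When $X$ is K\"ahler, Remark~\ref{remark demailly paun}(1) gives $\sK_X \subset \DP(X)$, so $C$ is the component of $P$ meeting the K\"ahler cone.

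For closedness of $\DP(X)$ in $P$: if $\alpha_n \in \DP(X)$ converge to $\alpha \in P$, then for any $f : W \to X$ bimeromorphic onto its image with $W$ smooth compact K\"ahler (it suffices to test these by Lemma~\ref{lemma demailly paun}), $f^*\alpha = \lim f^*\alpha_n$ is nef as a limit of nef classes, and $\int_W(f^*\alpha)^{\dim W} = \int_{f(W)}\alpha^{\dim f(W)} > 0$ by $\alpha \in P$. Since a nef class with positive top self-intersection is big, $f^*\alpha$ is big and nef, so $\alpha \in \DP(X)$.

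Openness in $P$ is the main content. I would fix a K\"ahler resolution $\pi : Y \to X$ and a K\"ahler class $\omega_Y \in \sK_Y$. For $\alpha \in \DP(X)$, $\pi^*\alpha \in \Nef(Y) = \overline{\sK_Y}$ by Lemma~\ref{lemma demailly paun}, so $\pi^*\alpha + \epsilon\omega_Y \in \sK_Y$ for every $\epsilon > 0$. Given $\beta \in P$ close to $\alpha$, the goal is to show $\pi^*\beta \in \Nef(Y)$, equivalently $\pi^*\beta + \epsilon\omega_Y \in \sK_Y$ for every sufficiently small $\epsilon > 0$. By the classical Demailly--P\u aun theorem applied to $Y$, since $\sK_Y$ is the connected component of $P(Y)$ containing the K\"ahler cone, this reduces to showing $\pi^*\beta + \epsilon\omega_Y$ lies in $P(Y)$ and is path-connected to $\pi^*\alpha + \epsilon\omega_Y$ within $P(Y)$. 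The natural path is $s \mapsto \pi^*((1-s)\alpha + s\beta) + \epsilon\omega_Y$, with the segment $(1-s)\alpha + s\beta$ remaining in $P$ by openness of $P$ under small perturbations. The positivity condition on an irreducible $W \subset Y$ of dimension $k$ splits into two cases: if $\pi|_W : W \to \pi(W)$ is birational, the leading term in $\epsilon$ is $\int_{\pi(W)}\gamma^k > 0$ where $\gamma = (1-s)\alpha + s\beta \in P$; if $W$ is $\pi$-contracted with $d = \dim\pi(W) < k$, the leading $\epsilon^{k-d}$-term has positive coefficient by the projection formula. The main obstacle will be obtaining uniformity in $W$---so that a single range of $\epsilon$ works for all subvarieties simultaneously---which I expect to handle by combining openness of $\sK_Y$ in $H^{1,1}(Y, \R)$ near $\pi^*\alpha + \epsilon_0\omega_Y$ for a carefully chosen fixed $\epsilon_0$ with Chow/Barlet-style compactness arguments for families of cycles in $Y$.
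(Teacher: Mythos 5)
Your architecture (prove $\DP(X)\subset P$, convexity of $\DP(X)$, and then that $\DP(X)$ is open and closed in $P$) is different from the paper's, and three of the four pieces are sound: the inclusion $\DP(X)\subset P$, convexity, and the closedness argument via P\u aun descent together with ``nef plus positive top self-intersection implies big'' are all correct. Your implicit reduction of membership in $\DP(X)$ to nefness of $\pi^*\beta$ on a single K\"ahler resolution $Y$ of $X$ is also legitimate, but only because $\beta\in P$ then supplies bigness on resolutions of every subvariety by exactly the mechanism of your closedness step; this should be said explicitly. The genuine gap is in the openness step, and it sits precisely where you flagged it. Two things go wrong. First, the claim that the segment $(1-s)\alpha+s\beta$ stays in $P$ is unjustified: $P$ is cut out by infinitely many strict inequalities and is neither convex nor obviously open. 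Second, and fatally, in the expansion $\int_W(\pi^*\gamma+\eps\omega_Y)^k=\sum_j\binom{k}{j}\eps^j\int_W(\pi^*\gamma)^{k-j}\omega_Y^j$ the intermediate terms have no controlled sign unless you already know that $\pi^*\gamma$ is nef on $W$ --- which is what you are trying to prove. A ``leading term dominates'' argument therefore forces $\eps$ to depend on $W$, and no cycle-space compactness will make it uniform: Bishop--Barlet compactness applies only to families of cycles of bounded volume, while the $\omega_Y$-degrees of subvarieties of $Y$ are unbounded. Note also that $\Nef(Y)$ is not open, and $\pi^*\alpha$ typically lies on its boundary, so openness of $\sK_Y$ near $\pi^*\alpha+\eps_0\omega_Y$ only yields, for each fixed $\eps_0$, a neighborhood of $\alpha$ that may shrink to a point as $\eps_0\to0$; this does not give nefness of $\pi^*\beta$ for $\beta$ in a fixed neighborhood.

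The paper's proof supplies exactly the missing idea: one inducts on the dimension of subvarieties of $X$, showing that every class in the connected component of $P$ containing $\DP(X)$ restricts to a Demailly--P\u aun class on every subvariety of dimension $<d$ before treating dimension $d$. Then, on a K\"ahler resolution $W\to V\subset X$ with $\dim V=d$, every proper subvariety $Z\subsetneq W$ maps to a subvariety of $X$ of dimension $<d$, so $\pi^*\gamma|_Z$ is already nef by the inductive hypothesis (and Lemma~\ref{lemma demailly paun}); hence every mixed term is nonnegative and $\int_Z(\pi^*\gamma+\eps\kappa)^{\dim Z}\geq\eps^{\dim Z}\int_Z\kappa^{\dim Z}>0$ for \emph{every} $\eps>0$, with no uniformity in $Z$ required. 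The smooth Demailly--P\u aun theorem on $W$ then makes $\pi^*\gamma+\eps\kappa$ K\"ahler, nefness of $\pi^*\gamma$ follows by letting $\eps\to0$, and bigness again comes from $\gamma\in P$. If you restructure your openness step around this induction, your closedness step becomes superfluous, since the induction directly shows that the entire connected component lies in $\DP(X)$.
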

\begin{proof}
Clearly, $\DP(X)\subset P$ and as the Demailly--P\u aun cone is convex, it is contained in a connected component of $P$. Moreover, if $X$ is Kähler, the Kähler cone is contained in $\DP(X)$.

For the converse, we may assume that $\DP(X)$ is non-empty, otherwise there is nothing to prove. Let $\alpha\in P$ be a class in the same connected component as $\DP(X)$. We will prove that the restriction of $\alpha$ to any subvariety of $X$ is Demailly--P\u aun by induction on the dimension of the subvariety.

For $d=0$ the statement is trivial. Let $V\subset X$ be a subvariety of dimension $d$ and assume that $\alpha$ is Demailly--P\u aun on every subvariety of $X$ of dimension strictly smaller than $d$.
 We denote by $\pi:W\to X$ the composition of a resolution of singularities of $V$ with the inclusion $V \subset X$ where $W$ is a compact Kähler manifold of dimenion $d$. Such a resolution exists thanks to Proposition \ref{proposition kaehler space}. 
 By Lemma \ref{lemma demailly paun} it suffices to prove that $\pi^*\alpha$ is big and nef. Clearly, $\alpha\vert_V$ fulfills the hypotheses of the theorem if $\alpha$ does.  We show first that $\pi^*\alpha$ is nef on $W$ using the Demailly--P\u aun theorem on $W$. Let us take a Kähler class $\kappa$ on $W$. For $0< \veps \ll 1$ the class $\alpha_W:=\pi^*\alpha + \veps \kappa$ satisfies $\alpha_W^d > 0$. If $Z \subset W$ is a proper analytic subvariety of dimension $e$, then $\pi(Z) \subset V$ is also a proper subvariety and thus $\alpha\vert_{\pi(Z)}$ is Demailly--P\u aun by the inductive hypothesis. 
We will show that $\int_Z \alpha_W^{e}>0$. But this can be computed on a resolution of singularities, so we may without loss of generality assume that $Z$ is nonsingular. Then $\pi^*\alpha\vert_Z$ is nef by Lemma \ref{lemma demailly paun} and therefore $\alpha_W\vert_Z$ has positive top self intersection.

As $\alpha$ is in the same connected component of $P=P(V)$ as the Demailly--P\u aun classes on $V$, also $\alpha_W$ is in the same connected component $P(W)$ as the Demailly--P\u aun classes on $W$. But by \cite[Main Theorem~0.1]{DP04}, we have $\DP(W)=\sK(W)$ where $\sK(W)$ denotes the Kähler cone.
Hence, the Demailly--P\u aun theorem applies and $\alpha_W$ is Kähler. Moreover, $\pi^*\alpha$ is nef on $W$ because $\veps$ was arbitrarily small.
But certainly $\int_W(\pi^*\alpha)^d > 0$ and therefore $\pi^*\alpha$ is also big on $W$ by \cite[0.4 Theorem]{DP04}. This concludes the proof.
\end{proof}

\subsection{Projectivity criterion}

In this section, the term \emph{very general} is to be interpreted in terms of locally trivial deformations, i.e., outside a countable union of proper subvarieties in the base of the locally trivial Kuranishi family.
\begin{definition}\label{definition positive cone}
Let $X$ be a primitive symplectic variety and $q_X$ its BBF form. We define the positive cone
\begin{equation}
\sC_X:=\left\{ \alpha\in H^{1,1}(X,\R)\middle| \ q_X(\alpha)>0 \right\}^\kappa
\end{equation}
where $\kappa$ denotes the connected component containing the Kähler cone.
\end{definition}

\begin{theorem}\label{theorem demailly paun cone}
For a very general primitive symplectic variety $X$, the positive cone equals the Demailly--P\u aun cone:
\begin{equation}\label{eq dpx gleich cx}
\DP(X) = \sC_X.
\end{equation}
\end{theorem}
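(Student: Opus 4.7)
The plan is to prove the two inclusions separately. For $\DP(X)\subset\sC_X$, which should hold for every primitive symplectic variety without any genericity assumption, I would argue as follows: given $\alpha\in\DP(X)$, choose a resolution $\pi\colon Y\to X$ by a K\"ahler manifold; by Lemma~\ref{lemma demailly paun} the class $\pi^*\alpha$ is big and nef on $Y$, so $\int_X\alpha^{2n}=\int_Y(\pi^*\alpha)^{2n}>0$. Applying Proposition~\ref{proposition fujiki relations} to the Hodge form $\phi(x)=\int_X x^{2n}$ gives $\int_X\alpha^{2n}=c\cdot q_X(\alpha)^{n}$ for some $c\in\Q$, and testing against a K\"ahler class (for which $q_X>0$ by the signature in Lemma~\ref{lemma signature bbf form}) forces $c>0$, hence $q_X(\alpha)>0$. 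Since $\DP(X)$ is convex, hence connected, and meets the K\"ahler cone $\sK_X\subset\sC_X$, it lies in $\sC_X$.

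For the opposite inclusion, I would invoke the singular Demailly--P\u aun Theorem~\ref{theorem demailly paun}. Since $\sC_X$ is connected and contains $\sK_X\subset\DP(X)$, it is enough to verify $\sC_X\subset P$, where $P$ is the positive cone of that theorem; equivalently, $\int_V\alpha^{\dim V}>0$ for every $\alpha\in\sC_X$ and every closed subvariety $V\subset X$. For very general $X$, Corollary~\ref{corollary no odd subvarieties} rules out odd-dimensional $V$, so only even $k:=\dim V$ matter. I would then aim to establish the Fujiki-type identity $\int_V\alpha^{k}=c_V\cdot q_X(\alpha)^{k/2}$ with $c_V>0$: positivity of $c_V$ follows by evaluating against a K\"ahler class, and then $\alpha\in\sC_X$ immediately yields $\int_V\alpha^{k}>0$.

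The crucial step, and where the genericity of $X$ is used, is to verify the hypothesis of Proposition~\ref{proposition fujiki relations} for $\phi_V:=\int_V$, namely that $\phi_V$ remains of type $(-k,-k)$ on all small locally trivial deformations of $X$. My strategy is a Noether--Lefschetz type argument on the relative Douady space $\scrD\to\Def^\lt(X)$ of the locally trivial Kuranishi family $f\colon\scrX\to\Def^\lt(X)$. Writing $\scrD=\bigcup_i\scrD_i$ as a countable union of irreducible components, the image in $\Def^\lt(X)$ of any $\scrD_i$ that does not dominate is a proper analytic subvariety of the base; removing these countably many subvarieties defines the very general locus. For $X=\scrX_0$ in this locus, each $V\subset X$ is represented by a point of some $\scrD_i$ that dominates $\Def^\lt(X)$, and after flattening-stratification I obtain an open dense $T\subset\scrD_i$ together with a universal subspace $\scrW\subset\scrX_T$ flat over $T$ whose composition $p\colon T\to\Def^\lt(X)$ is dominant. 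The lemma preceding Corollary~\ref{corollary no odd subvarieties} then produces a section of $\Sym^k R^2 p_*\Q^\vee$ which is of type $(-k,-k)$ pointwise; parallel transport of $\phi_V$ in $\Sym^k R^2 f_*\Q^\vee$ agrees with this section after lifting through $p$, so $\phi_V$ stays Hodge on every nearby fibre of $f$, and Fujiki applies.

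The principal technical obstacle is precisely this last step: to make the Douady-space/flattening argument precise, ensuring that the dominant base $T$ meets our specific very general $X$ and that the resulting parallel section of $\Sym^k R^2 f_*\Q^\vee$ really coincides with $\phi_V$ on the central fibre. Once this is established, combining the two directions with Theorem~\ref{theorem demailly paun} completes the proof.
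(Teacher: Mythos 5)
Your proposal is correct and follows essentially the same route as the paper: both inclusions are handled exactly as in the paper's proof, namely $\DP(X)\subset\sC_X$ via Theorem~\ref{theorem demailly paun} together with the Fujiki relation for the top self-intersection, and $\sC_X\subset\DP(X)$ by combining Corollary~\ref{corollary no odd subvarieties}, the Fujiki-type identity $\int_Z\alpha^{2d}=c_Z\,q_X(\alpha)^d$ with $c_Z>0$, and the connectedness/Kähler-cone argument feeding back into Theorem~\ref{theorem demailly paun}. The step you single out as the principal obstacle --- propagating $\phi_V=\int_V$ to a type $(-k,-k)$ section over the Kuranishi base via the relative Douady space and a flattening stratification --- is precisely what the paper handles implicitly through the lemma on flat families of subvarieties and its definition of \emph{very general}, so your more explicit treatment of it is a refinement of presentation rather than a different argument.
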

\begin{proof}
The Demailly--P\u aun cone is always contained in the positive cone by Theorem~\ref{theorem demailly paun}. Let us prove the other inclusion. By Corollary \ref{corollary no odd subvarieties}, $X$ does not contain any odd dimensional subvarieties. Let $Z\subset X$ be a subvariety and denote by $2d$ its dimension. Choose a Kähler class $\kappa$ on $X$. 
Then by the Fujiki relations, Proposition~\ref{proposition fujiki relations}, there is a constant $c_Z \in \R$ such that for every $\alpha\in H^2(X,\C)$ the equality 
\[
\begin{aligned}
c_Z \cdot \int_Z(\alpha S + \kappa T)^{2d} &= q_X(\alpha S+ \kappa T)^d \\
& = \left( q_X(\alpha) S^2 +2q_X(\alpha,\kappa) ST+ q_X(\kappa)T^2\right)^d
\end{aligned}
\]
of polynomials in the indeterminates $S$ and $T$ holds. Choosing $\alpha=\kappa$ we see that $c_Z$ has to be strictly positive. From now on let $\alpha \in \sC_X$. As also $\kappa \in\sC_X$, Lemma \ref{lemma signature bbf form} implies that $q_X(\alpha,\kappa)> 0$. The coefficients of the polynomial on the right-hand side are manifestly all positive. We conclude from looking at the left-hand side that for every $0\leq \lambda \leq 1$ we have that $\lambda \alpha + (1-\lambda) \kappa$ lies in the cone $P$ from Theorem \ref{theorem demailly paun}. In particular, $\alpha$ is in the connected component of $P$ containing the Kähler cone $\sK(X)$. We conclude from Theorem \ref{theorem demailly paun} that $\alpha \in \DP(X)$.
\end{proof}

The following is the singular version of \cite[Theorem 3.11]{Huy03cone} and the proof relies on important ideas of his and of Menet \cite{Men20}, see Section 4 of Menet's article. The presentation follows \cite[Proposition 26.13]{GHJ03}.

\begin{theorem}\label{theorem projectivity criterion} Let $X$ be a primitive symplectic variety and $\alpha \in H^2(X,\Z)$ a $(1,1)$-class. If $q(\alpha)> 0$, then $X$ is projective.
\end{theorem}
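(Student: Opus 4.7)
The plan is to deform $X$ along the hypersurface of locally trivial deformations preserving the Hodge type of $\alpha$, prove that a very general such deformation is projective using the singular Demailly--P\u{a}un theorem just established, and transfer projectivity to $X$ by specialization on a simultaneous resolution. First, since $\alpha$ is of type $(1,1)$ and $H^1(X,\sO_X)=0$, the exponential sequence yields a line bundle $L$ with $c_1(L)=\alpha$. The form $q_X$ restricted to $H^{1,1}(X,\R)$ has signature $(1,h^{1,1}(X)-1)$, so $\{q_X>0\}\cap H^{1,1}(X,\R)$ has exactly two connected components; after replacing $L$ by $L^{-1}$ if needed, I may assume $\alpha\in\sC_X$.

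By Corollary \ref{corollary deformations of line bundles}, the subspace $T:=\Def^\lt(X,L)\subset\Def^\lt(X)$ is a smooth hypersurface whose period image equals $\alpha^\perp\cap\Omega(X)$. Let $f:\sX\to T$ be the restricted universal family, carrying a relative line bundle $\sL$ specializing to $L$ at $0\in T$. For $t\in T$ very general, Theorem \ref{theorem demailly paun cone} gives $\DP(X_t)=\sC_{X_t}$. Since the positive cone is the connected component of $\{q>0\}$ in a trivialization of $R^2f_*\R$ containing the K\"ahler cone at the origin, parallel transport keeps $\alpha_t:=c_1(L_t)$ in $\sC_{X_t}$, hence in $\DP(X_t)$. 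Being rational, $\alpha_t$ is then K\"ahler by Remark \ref{remark demailly paun}\eqref{remark demailly paun item three}, so $L_t$ is ample and $X_t$ is projective.

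To transfer projectivity to $X$, I would take a simultaneous resolution $\pi:\sY\to\sX$ over $T$ by successive blow-ups in smooth centers via Lemma \ref{lemma simultaneous resolution}; the morphism $\sY\to T$ is then smooth. For a sequence of very general $t_n\to 0$ in $T$, the pullback $\pi^*\alpha_{t_n}$ is nef on $Y_{t_n}$ as the pullback of a K\"ahler class on $X_{t_n}$, and since the nef cone is closed under specialization in a smooth family, $\pi^*\alpha$ is nef on $Y=Y_0$. By the Fujiki relations (Proposition \ref{proposition fujiki relations}), $\int_Y(\pi^*\alpha)^{2n}=\int_X\alpha^{2n}=c\,q_X(\alpha)^n>0$ with $c>0$ (verified by testing on a K\"ahler class), so $\pi^*\alpha$ is also big on $Y$. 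Lemma \ref{lemma demailly paun} then forces $\alpha\in\DP(X)$, and by rationality Remark \ref{remark demailly paun}\eqref{remark demailly paun item three} yields that $L$ is ample, so $X$ is projective.

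The subtle point will be the application of Theorem \ref{theorem demailly paun cone} at a very general point of the codimension-one subspace $T$: the exceptional locus where $\DP\neq\sC$ on a fiber $X_t$ is a countable union of proper analytic subsets of $\Def^\lt(X)$ coming from Mumford--Tate-type conditions, and one must check this locus does not contain the whole hypersurface $T$ so that very-generality within $T$ is enough. The remaining inputs---simultaneous resolution with controlled positivity on fibers, closedness of nef classes under specialization in smooth families, positivity of the Fujiki constant, and ampleness of rational Demailly--P\u{a}un classes---are all supplied by earlier results of the paper.
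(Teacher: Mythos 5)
Your overall strategy --- deform, apply the singular Demailly--P\u{a}un theorem on very general nearby fibers, and specialize back --- is in the same spirit as the paper's argument, but the specific way you deform creates a genuine gap that you have correctly sensed but not resolved, and it is not a removable technicality. You restrict to the hypersurface $T=\Def^\lt(X,L)$ and invoke Theorem \ref{theorem demailly paun cone} at a very general point of $T$. But that theorem is only proved for very general points of the \emph{full} locally trivial Kuranishi space, and its proof cannot work on $T$: it rests on Corollary \ref{corollary no odd subvarieties} (no odd-dimensional subvarieties) and on Fujiki relations for classes $\phi\in\Sym^k H^2(X,\Q)^\vee$ that stay of type $(-k,-k)$ under \emph{all} small locally trivial deformations. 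Over $T$ the relevant Mumford--Tate group is smaller (roughly $\SO(\alpha^\perp)$), the cycle classes of subvarieties of $X_t$ are only Hodge along $T$, and the fibers over $T$ --- which are exactly the ones you are trying to prove projective --- contain curves and other odd-dimensional subvarieties in abundance. So the locus where $\DP=\sC$ is known to hold can a priori miss $T$ entirely; your ``subtle point'' is in fact the crux of the matter, and closing it would require a separate argument (e.g., Fujiki-type relations for the sub-variation of Hodge structure over $T$).

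The paper avoids this by \emph{not} keeping $\alpha$ of type $(1,1)$: it takes very general points $t_i\to 0$ of the full Kuranishi space (where Theorem \ref{theorem demailly paun cone} genuinely applies), approximates $\alpha$ by \emph{real} $(1,1)$-classes $\alpha_{t_i}\in\sC_{\sX_{t_i}}$, pulls the resulting K\"ahler currents back along a simultaneous resolution, and passes to a limit of closed positive currents to conclude that $\pi^*\alpha$ is \emph{big} (no nefness is claimed or needed) on $Y$; then $L$ is big, $X$ is Moishezon, and K\"ahler plus rational singularities gives projectivity by Namikawa's theorem. Note also that your final step misapplies Lemma \ref{lemma demailly paun}: knowing that $\pi^*\alpha$ is big and nef on the single resolution $Y$ does not place $\alpha$ in $\DP(X)$, which requires big-and-nefness on resolutions of \emph{every} subvariety of $X$. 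That last defect is harmless, since bigness of $\pi^*\alpha$ alone already yields that $L$ is big and hence that $X$ is Moishezon and projective; but the reliance on $\DP=\sC$ over the Noether--Lefschetz hypersurface is a real gap in the proposal as written.
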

Note that the existence of such a class can be read off only from the period. 

\begin{proof} 
By the Lefschetz $(1,1)$-theorem, there is a line bundle $L$ on $X$ with first Chern class $\chern_1(L)=\alpha$. We will show that $L$ is big. It suffices to do this on a resolution, say $\pi:Y\to X$, as bigness of a line bundle is a birationally invariant notion. Bigness of the line bundle $\pi^*L$ is implied by bigness of $\pi^*\alpha$, see \cite[Theorem~4.6]{JS93}. The strategy is to infer bigness of $\alpha$ by approximating $\alpha$ on a resolution with Kähler currents on nearby varieties. 

Let us consider the locally trivial Kuranishi family $\sX\to S:=\Def^\lt(X)$ and take a simultaneous resolution $\sY\to\sX$ which is possible by Lemma~\ref{lemma simultaneous resolution}. From now on we choose $\pi:Y\to X$ to be the special fiber of $\sY \to \sX$.  For a very general $t\in S$ the corresponding primitive symplectic varieties $\sX_t$ satisfy $\DP(\sX_t)=\sC_{\sX_t}$ thanks to Theorem \ref{theorem demailly paun cone}. Therefore, $\alpha$ can be approximated by Demailly--P\u aun classes $\alpha_{t_i}$ on $\sX_{t_i}$ where $t_i \to 0\in S$ for $i\to \infty$, where $X$ is the fiber of $\sX \to S$ over $0$. Consequently, $\pi^*\alpha$ can be approximated by big classes on nearby fibers $\sY_{t_i}$ and as in \cite[Proposition 6.1]{Dem92}, see also the proof of \cite[Proposition 26.13]{GHJ03}, we deduce that $\pi^*\alpha$ is big. The key point here is to see that in the above approximation procedure, the limit of a sequence of closed positive currents are again closed and positive. This is explained in detail in the appendix by Diverio to \cite{ADH19}. As explained before, bigness of $\pi^*\alpha$ implies that $\pi^*L$ and hence $L$ is big. Thus, $X$ is Moishezon. Being Kähler and having rational singularities, it must be projective by \cite[Theorem~1.6]{Nam02}.
\end{proof}

The following result is the singular analog of \cite[Theorem 4.8 2)]{Fuj83}, see also \cite[Theorem 3.5]{Huy99} and \cite[Proposition 26.6]{GHJ03}. We have to change the proof slightly in the singular setting.

\begin{corollary}\label{corollary density of projectives} Let $X$ be a primitive symplectic variety, $f:\sX\to \Def^\lt(X)$ be the universal locally trivial deformation of $X=f^{-1}(0)$, and $S \subset \Def^\lt(X)$ a positive-dimensional subvariety through $0 \in \Def^\lt(X)$. Then in every open neighborhood $U\subset S$ of $0$ there is a point $s\in U$ such that the fiber $\sX_s$ is projective.
\end{corollary}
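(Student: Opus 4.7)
The plan is to invoke the projectivity criterion Theorem~\ref{theorem projectivity criterion}: it will suffice to find $s\in U$ and an integral class $\alpha\in H^2(\sX_s,\Z)$ of type $(1,1)$ satisfying $q(\alpha)>0$. By the local Torelli theorem (Proposition~\ref{proposition local torelli}), after shrinking $U$ I may assume $\wp|_U$ is a biholomorphism onto its image $T:=\wp(U)\subset\Omega(X)$, a positive-dimensional complex analytic subvariety through $[\sigma_0]:=\wp(0)$. Fixing a local holomorphic section $\tilde\sigma$ of the tautological line subbundle of $\P(H^2(X,\C))$ restricted to $T$, an integral class $\alpha$ is of type $(1,1)$ at $s\in U$ precisely when the holomorphic function $f_\alpha:=q(\tilde\sigma(\cdot),\alpha):T\to \C$ vanishes at $\wp(s)$.

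The key step is to produce a real class $\alpha_0\in H^{1,1}(X,\R)$ with $q(\alpha_0)>0$ such that $f_{\alpha_0}$ is not identically zero on $T$. By Lemma~\ref{lemma signature bbf form}, $H^{1,1}(X,\R)=\sigma_0^\perp\cap H^2(X,\R)$ has signature $(1,b_2(X)-3)$, so the positive cone $\sC:=\{q>0\}\cap H^{1,1}(X,\R)$ is a non-empty open cone. Since $\alpha\mapsto f_\alpha$ is $\R$-linear and $\sC$ spans $H^{1,1}(X,\R)$, were no such $\alpha_0$ to exist, then $f_\alpha\equiv 0$ on $T$ for every $\alpha\in H^{1,1}(X,\R)$, and hence $T\subset\P\bigl(H^{1,1}(X,\C)^{\perp_q}\bigr)\cap\Omega(X)=\P(\langle\sigma_0,\bar\sigma_0\rangle)\cap\Omega(X)$. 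But $q$ is positive definite on the real $2$-plane $\langle\Re\sigma_0,\Im\sigma_0\rangle$, so this intersection reduces to $\{[\sigma_0],[\bar\sigma_0]\}$, contradicting $\dim T>0$.

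Next, I would approximate $\alpha_0$ by integer classes: choose $\alpha_n\in H^2(X,\Z)$ and positive integers $N_n$ with $\alpha_n/N_n\to\alpha_0$ in $H^2(X,\R)$. Rescaling affects neither the zero locus of $f_{\alpha_n}$ nor the sign of $q(\alpha_n)$, and for $n\gg 0$ continuity gives $q(\alpha_n)>0$. Choose a holomorphic arc $\gamma:(\Delta,0)\to(T,[\sigma_0])$ along which $g_0:=f_{\alpha_0}\circ\gamma$ is non-constant (possible since $f_{\alpha_0}\not\equiv 0$ on $T$ and $\dim T\geq 1$). Shrinking $\Delta$ so that $\gamma(\Delta)\subset\wp(U)$, the functions $g_n:=f_{\alpha_n/N_n}\circ\gamma$ converge to $g_0$ uniformly on compacts, so they are non-constant for $n\gg 0$, with $g_n(0)=q(\sigma_0,\alpha_n/N_n)\to 0$. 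The open mapping theorem then implies $0\in g_n(\Delta)$ for $n$ large, yielding $t_n\in\Delta$ with $f_{\alpha_n}(\gamma(t_n))=0$. The point $s_n:=(\wp|_U)^{-1}(\gamma(t_n))\in U$ carries $\alpha_n$ as an integral $(1,1)$-class with $q(\alpha_n)>0$, so $\sX_{s_n}$ is projective by Theorem~\ref{theorem projectivity criterion}.

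The main obstacle lies in the second paragraph: one must simultaneously secure $q(\alpha_0)>0$ \emph{and} $f_{\alpha_0}\not\equiv 0$ on $T$. The $\R$-linearity of $\alpha\mapsto f_\alpha$, combined with the elementary observation that $T$ cannot lie in the $0$-dimensional set $\P(\langle\sigma_0,\bar\sigma_0\rangle)\cap\Omega(X)$, overcomes this; the remaining approximation and open mapping arguments are standard and do not require $X$ itself to be projective.
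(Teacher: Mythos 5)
Your argument is correct, but it reaches the conclusion by a genuinely different route than the paper. The paper follows Fujiki and Huybrechts: it picks a K\"ahler class $\omega$ whose Noether--Lefschetz locus meets $S$ transversally, approximates $[\omega]$ by rational classes $\alpha_i$ that are not $(1,1)$ at $0$, obtains points $t_i\to 0$ where $\alpha_i$ becomes $(1,1)$, and then must show $\alpha_i$ is actually \emph{K\"ahler} on $\sX_{t_i}$ --- which in the singular setting requires passing to a simultaneous resolution, correcting by exceptional divisors, running the harmonic-representative argument there, and descending projectivity via Namikawa. You instead outsource all of that analysis to the already-established projectivity criterion (Theorem~\ref{theorem projectivity criterion}), so you only need a rational $(1,1)$-class of positive BBF square on some nearby fiber; this is a purely Hodge-theoretic density statement in the period domain, which you establish cleanly via the signature $(1,b_2-3)$ of $q$ on $H^{1,1}(X,\R)$, the identification $H^{1,1}(X,\C)^{\perp_q}=\langle\sigma_0,\bar\sigma_0\rangle$, and an approximation argument. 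There is no circularity, since the proof of Theorem~\ref{theorem projectivity criterion} does not use this corollary. What your approach buys is a shorter and arguably more robust proof (no transversality choice, no harmonic representatives, no need for the approximating classes to be close to a K\"ahler class --- positivity of $q$ suffices); what the paper's approach buys is independence from the full strength of the projectivity criterion and closer alignment with the classical references. Two small points of hygiene: the existence of your arc $\gamma$ with $f_{\alpha_0}\circ\gamma$ non-constant requires first replacing $T$ by a positive-dimensional irreducible component of its germ at $[\sigma_0]$ (so that $\{f_{\alpha_0}=0\}$ is a proper analytic subset of it) and then the curve selection lemma; and the deduction that $0\in g_n(\Delta)$ for $n\gg 0$ should be justified by Hurwitz's theorem (or Rouch\'e on a small circle where $g_0\neq 0$) rather than the open mapping theorem alone, since an open set containing $g_n(0)$ near $0$ need not contain $0$. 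Both are standard repairs.
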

\begin{proof}
The proof is almost the same as in \cite[Theorem 4.8 2)]{Fuj83} respectively \cite[Theorem 3.5]{Huy99}. We refer to these references for details and content ourselves with a sketch of proof. One restricts to a one-dimensional disk $S\subset \Def^\lt(X)$ and chooses a Kähler form $\omega$ on $X$ such that the locus $S_{[\omega]}\subset \Def^\lt(X)$ where the class $[\omega]$ remains of type $(1,1)$ intersects $S$ transversally. Next one chooses classes $\alpha_i \in H^2(X,\Q)$ converging to $[\omega]$ such that the $\alpha_i$ are not of type $(1,1)$ on $X$. Then the $(1,1)$-locus $S_{\alpha_i}\subset \Def^\lt(X)$ intersects $S$ in points $t_i\neq 0$ converging to $0$. Now the idea is that the $(1,1)$-class $\alpha_i$ is Kähler on $\sX_{t_i}$ for $t_i$ sufficiently close to $0$. In \cite[Theorem 3.5]{Huy99} this is seen via harmonic representatives. As $X$ is singular, we cannot argue literally the same. However, due to Lemma \ref{lemma simultaneous resolution} we may take a simultaneous resolution $\pi:\sY\to\sX$ obtained by successive blow ups. In particular, there is an $\R$-linear combination $E$ of exceptional divisors such that for $e:=\chern_1\left(\sO(E)\right)$ we have that $\alpha_i-e$ is Kähler on $Y:=\pi^{-1}(X)$. Now we apply the argument involving harmonic representatives to $\alpha_i- e$ and deduce that for $t_i$ sufficiently close to $0$ the variety $\sY_{t_i}$ is projective. Hence, also the corresponding $\sX_{t_i}$ is projective by \cite[Theorem 1.6]{Nam02}.
\end{proof}

We immediately deduce

\begin{corollary}\label{corollary density of projectives2} Let $X$ be a primitive symplectic variety and let $f:\sX\to \Def^\lt(X)$ be the universal locally trivial deformation of $X=f^{-1}(0)$. Then for every positive-dimensional subvariety $S \subset \Def^\lt(X)$ the set of points $\Sigma\subset S$ with projective fiber is dense.\qed
\end{corollary}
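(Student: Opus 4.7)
The plan is to reduce the statement to the already-established Corollary~\ref{corollary density of projectives} by translating the basepoint of the Kuranishi family. Let $S\subset \Def^\lt(X)$ be a positive-dimensional subvariety and let $s_0\in S$ be an arbitrary point; I need to produce a projective fiber arbitrarily close to $s_0$.

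First I would invoke Corollary~\ref{cor deformation symplectic} to note that the locally trivial Kuranishi family $f:\sX\to \Def^\lt(X)$ is universal at every one of its fibers (combining universality with openness of versality, as stated at the end of Corollary~\ref{cor deformation symplectic}). In particular, $\sX_{s_0}$ is a primitive symplectic variety and there is a canonical isomorphism of germs $(\Def^\lt(X),s_0)\cong (\Def^\lt(\sX_{s_0}),0)$ under which the restriction of $f$ corresponds to the universal locally trivial deformation of $\sX_{s_0}$.

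Under this identification, $S$ (restricted to a small enough neighborhood of $s_0$) becomes a positive-dimensional subvariety of $\Def^\lt(\sX_{s_0})$ passing through the basepoint~$0$. Corollary~\ref{corollary density of projectives} applied to $\sX_{s_0}$ then produces, in every open neighborhood of $0$ inside this image, a point whose corresponding fiber is projective. Pulling back through the germ isomorphism, this gives a point of $S$ arbitrarily close to $s_0$ at which the fiber of $f$ is projective. Since $s_0 \in S$ was arbitrary, the projective locus $\Sigma \subset S$ is dense.

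There is no substantive obstacle: the only subtlety is making sure the appeal to Corollary~\ref{corollary density of projectives} is legitimate at basepoints other than the distinguished point $0\in\Def^\lt(X)$, which is exactly what universality at every fiber (Corollary~\ref{cor deformation symplectic}) provides.
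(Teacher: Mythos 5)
Your proposal is correct and is exactly the argument the paper leaves implicit: the paper states this corollary with no proof ("We immediately deduce"), and the intended deduction is precisely the basepoint-translation you describe, using universality of the Kuranishi family at every fiber (Corollary~\ref{cor deformation symplectic}) to identify $(\Def^\lt(X),s_0)$ with $(\Def^\lt(\sX_{s_0}),0)$ and then applying Corollary~\ref{corollary density of projectives} there. Your write-up correctly identifies the one point that needs checking, so there is nothing to add.
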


\subsection{Inseparability and moduli}\label{subsection inseparable}
Given a primitive symplectic variety $X$ and a lattice $\Lambda$ with quadratic form $q$, a \emph{$\Lambda$-marking} of $X$ is an isomorphism $$\mu : (H^2(X,\Z)_\tf,q_X)\xrightarrow{\cong}(\Lambda,q).$$  A \emph{$\Lambda$-marked primitive symplectic variety} is a pair $(X,\mu)$ where $X$ is a primitive symplectic variety and $\mu$ is a $\Lambda$-marking of $X$. Two $\Lambda$-marked primitive symplectic varieties $(X,\mu)$ and $(X',\mu')$ are \emph{isomorphic} if there is an isomorphism $\vphi:X \to X'$ such that $\mu'=\mu \circ \vphi^*$.

\begin{definition}\label{definition marked moduli}
Given a lattice $\Lambda$ as above, we denote by $\gothM_\Lambda$ the analytic coarse moduli space of \emph{$\Lambda$-marked} primitive symplectic varieties.  As a set, $\gothM_\Lambda$ consists of isomorphism classes of $\Lambda$-marked primitive symplectic varieties $(X,\mu)$, and it is given the structure of a not-necessarily-Hausdorff complex manifold using Theorem \ref{theorem deflt is smooth} by identifying points in the bases of locally trivial Kuranishi families over which the fibers are isomorphic as $\Lambda$-marked varieties.
\end{definition}

Note that this definition coincides with the usual one \cite[1.18]{Huy99} for irreducible symplectic manifolds due to the fact that all deformations of smooth varieties are locally trivial. The following statement of Huybrechts' carries over together with its proof.
\begin{theorem}\label{theorem inseparable implies birational}
Let $X$, $X'$ be primitive symplectic varieties such that for some choice of marking $\mu:H^2(X,\Z)_\tf \to \Lambda$, $\mu':H^2(X',\Z)_\tf \to \Lambda$ the pairs $(X,\mu)$, $(X',\mu')$ define non-separated points in the $\Lambda$-marked moduli space. Then there is a bimeromorphic map $\phi:X \ratl X'$.
\end{theorem}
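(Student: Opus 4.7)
The strategy is Huybrechts' closure-of-graph argument, carried out in the singular setting using the tools developed earlier.

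\textbf{Common families from inseparability.} The first step is to unpack the inseparability hypothesis. By the construction of $\gothM_\Lambda$ in Definition~\ref{definition marked moduli} via gluing locally trivial Kuranishi bases, and by universality of the locally trivial Kuranishi family for all of its fibers (Corollary~\ref{cor deformation symplectic}), there exist a small disk $0\in\Delta$, two locally trivial families $f:\sX\to\Delta$ and $f':\sX'\to\Delta$ of primitive symplectic varieties with central fibers $X$ and $X'$, markings of $(R^2f_*\Z)_\tf$ and $(R^2f'_*\Z)_\tf$ extending $\mu$ and $\mu'$, together with an isomorphism $\Psi:\sX|_{\Delta^*}\xrightarrow{\cong}\sX'|_{\Delta^*}$ preserving the markings.

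\textbf{The limit of the graphs.} Next I would form the analytic closure $Z$ of the graph $\Gamma_\Psi\subset(\sX\times_\Delta\sX')|_{\Delta^*}$ inside the proper family $\sX\times_\Delta\sX'\to\Delta$. Then $Z\to\Delta$ is proper and surjective, and its central fiber can be written as an effective cycle $Z_0=\sum_i m_i Z_0^{(i)}\subset X\times X'$ of irreducible components of dimension $2n=\dim X$. The graph classes $[\Gamma_{\Psi_t}]\in H^{4n}(\sX_t\times\sX'_t,\Q)$ form a flat section of the local system $R^{4n}g_*\Q$ attached to the proper family $g:\sX\times_\Delta\sX'\to\Delta$ over $\Delta^*$, which specializes to $[Z_0]$ at the origin. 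Since $\Psi_t$ respects markings, the correspondence action $[\Gamma_{\Psi_t}]_\ast$ agrees under the marking identifications with the fixed isometry $\mu^{-1}\mu':H^2(X',\Q)\to H^2(X,\Q)$; consequently $[Z_0]_\ast$ acts on $H^2$ as $\mu^{-1}\mu'$, and, using Lemma~\ref{lemma tx} together with compatibility of top reflexive forms with base change in locally trivial families, $[Z_0]_\ast$ sends $\sigma_{X'}^n$ to a nonzero multiple $c\cdot\sigma_X^n\in H^{2n,0}(X)$.

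\textbf{Finding a bi-dominating component.} I would then rule out components $Z_0^{(i)}$ that fail to dominate both factors. If $p_2:Z_0^{(i)}\to X'$ is not dominant, its image $W$ has dimension strictly less than $2n$, so the top reflexive form $\sigma_{X'}^n$ pulls back to zero on any resolution of $Z_0^{(i)}$ via $W$ by functoriality of the Kebekus--Schnell reflexive pullback \cite{KS18}. Hence $[Z_0^{(i)}]_\ast\sigma_{X'}^n=0$, and a symmetric argument handles non-dominance of $p_1$. Since $[Z_0]_\ast\sigma_{X'}^n\neq 0$, at least one component $Z_0^{(1)}$ must project dominantly to both $X$ and $X'$.

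\textbf{Bimeromorphism.} On this component $p_1$ and $p_2$ are proper, surjective, and generically finite of some degrees $d_1,d_2$. Summing the contributions from the (possibly several) bi-dominating components and comparing with the identity $[Z_0]_\ast\sigma_{X'}^n=c\sigma_X^n$, the reducedness of $\Gamma_\Psi$ together with the fact that $p_1,p_2$ have degree one generically over $\Delta^*$ forces $m_1=d_1=d_2=1$, so that $\phi:=p_2\circ p_1^{-1}:X\ratl X'$ is the desired bimeromorphic map. The main obstacle I anticipate is precisely this final step: controlling the multiplicities $m_i$ in the limit cycle $Z_0$ in the singular setting requires a careful analysis of specialization in the Barlet cycle space, and justifying the reflexive pullback identities for projections whose image meets the singular loci of $X$ or $X'$ is where \cite{KS18} together with a simultaneous resolution (Lemma~\ref{lemma simultaneous resolution}) of the total spaces is essential in bootstrapping Huybrechts' original argument \cite{Huy99}.
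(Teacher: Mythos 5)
Your strategy is the same as the paper's: the proof given there is literally ``identical to \cite[Theorem 4.3]{Huy99} using a simultaneous resolution'', i.e.\ Huybrechts' graph-closure argument transported to the singular setting, and your first three steps reconstruct that argument correctly (common families from universality, the limit cycle $Z_0=\sum_i m_iZ_0^{(i)}$, and the use of the $(2n,0)$-class $\sigma_{X'}^n$ to rule out components that fail to dominate one of the factors).

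The genuine gap is in your final step. The identity $[Z_0]_*\sigma_{X'}^n=c\,\sigma_X^n$ cannot by itself ``force $m_1=d_1=d_2=1$'': each bi-dominant component contributes $(p_1)_*(p_2)^*\sigma_{X'}^n=\lambda_i\,\sigma_X^n$ with $\lambda_i\in\C$ an a priori arbitrary complex number --- there is no positivity or integrality here --- so knowing the weighted sum $\sum_i m_i\lambda_i=c$ constrains neither the multiplicities $m_i$ nor the degrees $d_{1,i},d_{2,i}$. What actually pins these down in Huybrechts' argument is the specialization of the correspondence in \emph{top} degree: since $\Gamma_{\Psi_t}$ is the graph of an isomorphism, $[\Gamma_{\Psi_t}]_*$ sends the point class of $H^{4n}(\sX_t)$ to the point class of $H^{4n}(\sX'_t)$, and passing to the limit gives
\[
\sum_{i\,:\,p_1|_{Z_0^{(i)}}\ \text{dominant}} m_i\,\deg\bigl(p_1|_{Z_0^{(i)}}\bigr)=1 ,
\]
a sum of positive integers. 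Hence there is \emph{exactly one} component dominating $X$, with multiplicity one and $p_1$ of degree one; your $\sigma^n$-argument then shows that this same component must also dominate $X'$ (the others contribute zero to the $(2n,0)$-part); and the symmetric top-degree identity over $X'$ gives $\deg(p_2)=1$. A second point you defer but which must be built in from the start: the correspondence actions $[Z_0^{(i)}]_*$ are not defined directly on the singular product $X\times X'$ (no Poincar\'e duality), so one runs the whole argument on a simultaneous resolution (Lemma~\ref{lemma simultaneous resolution}) of both families and descends the conclusions on $H^2$ via the injectivity of $\pi^*$ from Lemma~\ref{lemma peternell}; this is precisely the one modification of \cite{Huy99} that the paper records.
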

\begin{proof}
Identical to \cite[Theorem 4.3]{Huy99} using a simultaneous resolution.
\end{proof}

\begin{corollary}\label{cor bij mt general}If $(X,\mu)$ and $(X',\mu')$ are inseparable in moduli with Mumford--Tate general periods, then $(X,\mu)=(X',\mu')$.
\end{corollary}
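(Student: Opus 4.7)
My plan is first to upgrade the bimeromorphism provided by Theorem~\ref{theorem inseparable implies birational} to a genuine isomorphism using the absence of curves at MT general points, and then to use MT generality to pin down the induced isometry on the marking lattice. Since the period map is a local isomorphism (Proposition~\ref{proposition local torelli}) to the Hausdorff space $\Omega_\Lambda$, two inseparable points in $\gothM_\Lambda$ must share a common period $[\sigma]\in\Omega_\Lambda$; in particular, $(X',\mu')$ is MT general too, and Theorem~\ref{theorem inseparable implies birational} supplies a bimeromorphism $\phi\colon X\dashrightarrow X'$.

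To upgrade $\phi$ to an isomorphism, I would appeal to Corollary~\ref{corollary no odd subvarieties}: the MT general varieties $X$ and $X'$ contain no odd-dimensional closed subvarieties, hence no curves. Let $\Gamma\subset X\times X'$ be the closure of the graph, with projections $p$ and $q$. The indeterminacy locus of $\phi$ in $X$ has codimension $\geq 2$, and the corresponding positive-dimensional fibers of $p$ would project via $q$ onto positive-dimensional subvarieties of $X'$; any such subvariety must contain curves, which is impossible. Hence $\phi$ is a morphism, and the symmetric argument applied to $\phi^{-1}$ shows $\phi$ is an isomorphism.

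Then $g:=\mu\circ\phi^*\circ(\mu')^{-1}\in\O(\Lambda)$ is a Hodge isometry of the weight-two Hodge structure on $\Lambda$ induced by $[\sigma]$. Under MT generality the Mumford--Tate group equals $\SO(\Lambda,q)_\Q$, which acts irreducibly on $\Lambda_\Q$ since $\rk\Lambda\geq 3$, so by Schur's lemma its centralizer in $\O(\Lambda)$ is $\{\pm\id\}$; hence $g=\pm\id$. To exclude $g=-\id$, note that as inseparable points, $(X,\mu)$ and $(X',\mu')$ lie in the same connected component of $\gothM_\Lambda$, so $\mu(\sC_X)$ and $\mu'(\sC_{X'})$ cut out the same connected component of the set of $q$-positive real $(1,1)$-classes for $[\sigma]$ inside $\Lambda_\R$. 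Since the isomorphism $\phi$ sends the K\"ahler cone of $X'$ to that of $X$, the map $g$ preserves this component, whereas $-\id$ swaps the two components. Therefore $g=\id$, meaning $\mu'=\mu\circ\phi^*$, which exhibits the required isomorphism of $\Lambda$-marked primitive symplectic varieties.

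The main obstacle is the promotion of $\phi$ from bimeromorphism to isomorphism, which fundamentally uses the strong consequence of MT generality in Corollary~\ref{corollary no odd subvarieties} that rules out \emph{all} odd-dimensional subvarieties, including curves; once that is secured the rest is a clean combination of Schur's lemma and an orientation argument on the positive cone.
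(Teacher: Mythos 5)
Your proof is correct and follows essentially the same route as the paper: Theorem~\ref{theorem inseparable implies birational} produces the bimeromorphism, the absence of compact curves at Mumford--Tate general periods upgrades it to an isomorphism, and MT generality together with the K\"ahler cone forces the induced isometry of $\Lambda$ to be $+\id$ rather than $-\id$. Two minor remarks: the paper obtains ``no curves'' more directly (a curve gives a nonzero rational class in $H_2(X,\Q)$, hence by nondegeneracy of $q_X$ a nonzero rational $(1,1)$-class in $H^2(X,\Q)$, contradicting MT generality) instead of invoking Corollary~\ref{corollary no odd subvarieties}; and your step ``any positive-dimensional subvariety must contain curves'' is false for abstract compact complex varieties (e.g.\ a very general $2$-torus), so in the graph argument one should instead use that the positive-dimensional fibers of a \emph{projective} resolution of indeterminacy are projective and hence contain curves---this is precisely the ``standard argument'' the paper delegates to its footnote.
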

\begin{proof}By the theorem, there is a bimeromorphic $\phi:X\ratl X'$. Mumford-Tate generality implies that neither $X$ nor $X'$ contain compact curves. Indeed, such a curve would define a non-zero Hodge class e.g. in $H_2(X,\Q)$, so by the BBF form we also had a non-zero Hodge class in $H^2(X,\Q)$. By a standard argument, bimeromorphic maps between normal varieties without curves are necessarily isomorphisms.\footnote{This can be seen exactly as for projective algebraic varieties of Picard rank one by applying e.g. \cite[Lemma 1.15 (b)]{Deb01} to a resolution of indeterminacies and its inverse.} We therefore obtain an isomorphism of Hodge structures $H^2(X,\Z)_\tf\to H^2(X',\Z)_\tf$ which maps a K\"ahler class to a K\"ahler class.  The claim follows since the automorphism group of a Mumford--Tate general period $\Aut_{\Hdg}(H^2(X,\Z)_\tf)=\{\pm 1\}$, since
\[
\End(H^2(X,\Q),q_X)^{\SO(H^2(X,\Q),q_X)}=\Q\,\id.
\]
\end{proof}
We denote by $\Delta=\{z\in \C\mid \abs{z}\leq 1\}$ the complex unit disk and by $\Delta^*:=\Delta\ohnenull$ the complement of the origin. Recall that if two not necessarily $\Q$-factorial complex varieties are bimeromorphic, it is not in general true that we can push forward (or pull back) line bundles along the bimeromorphic map.
\begin{theorem}\label{theorem huybrechts strong}
Let $X$ and $X'$ be projective primitive symplectic varieties, and
let $\phi:X\ratl X'$ be a birational map which is an isomorphism in codimension $1$ such that $\phi_*: \Pic(X)_\Q \to \Pic(X')_\Q$ is well-defined and an isomorphism. Then there are one parameter locally trivial deformations $f:\scrX \to \Delta$, $f':\scrX' \to \Delta$ such that $\scrX$ and $\scrX'$ are birational over $\Delta$ and such that $\scrX^* = f^{-1}(\Delta^*) \isom (f')^{-1}(\Delta^*) = (\scrX')^* $. 
\end{theorem}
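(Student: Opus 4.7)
The plan is to use local Torelli to match up periods of $X$ and $X'$, construct one-parameter families along a common period disk, and then produce the birational relation by simultaneously deforming $\Q$-factorial terminalizations, forcing isomorphism over $\Delta^*$ via Mumford--Tate generality.

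First I would extract from $\phi$ a Hodge isometry $\phi^*:H^2(X',\Q)\to H^2(X,\Q)$ with respect to the BBF forms. Resolving the graph of $\phi$ yields a K\"ahler manifold $Z$ with bimeromorphic projections $p:Z\to X$ and $q:Z\to X'$. Since $\phi$ is an isomorphism in codimension one, the sets of $p$- and $q$-exceptional prime divisors coincide, so by Lemma~\ref{lemma peternell} the images $p^*H^2(X,\Q)$ and $q^*H^2(X',\Q)$ in $H^2(Z,\Q)$ both equal the annihilator of the exceptional curve classes and hence agree. Since $\phi$ is holomorphic where defined, $\phi^*$ respects the Hodge filtration, and in particular sends the symplectic class of $X'$ to a nonzero multiple of that of $X$. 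Compatibility with the BBF form follows from Lemma~\ref{lemma symplektisch} applied on a $\Q$-factorial terminalization (available for projective $X,X'$ by BCHM).

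With this isometry, I would choose $\Lambda$-markings $\mu,\mu'$ of $X,X'$ with $\mu':=\mu\circ(\phi^*)^{-1}$, so that $\wp(X,\mu)=\wp(X',\mu')=:p_0\in\Omega_\Lambda$. By the local Torelli theorem (Proposition~\ref{proposition local torelli}), $\wp$ restricts to biholomorphisms of neighborhoods of $0$ in $\Def^\lt(X)$ and $\Def^\lt(X')$ onto a common open $\Omega\subset\Omega_\Lambda$ around $p_0$. Pick a generic holomorphic embedding $\iota:\Delta\hookrightarrow\Omega$ through $p_0$, chosen so that $\iota(t)$ is Mumford--Tate general for all $t$ in some cofinite subset of $\Delta^*$ (possible as MT-special loci form a countable union of proper analytic subsets). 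Lifting $\iota$ via local Torelli to disks in $\Def^\lt(X)$ and $\Def^\lt(X')$ and pulling back the universal families yields $f:\scrX\to\Delta$ and $f':\scrX'\to\Delta$ with fiberwise matching marked periods.

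To produce the birational relation over $\Delta$, take $\Q$-factorial terminalizations $\pi:Y\to X$ and $\pi':Y'\to X'$. The bimeromorphism $Y\dashrightarrow Y'$ induced by $\phi$ is itself an isomorphism in codimension one, and the argument of the second paragraph yields a Hodge isometry $H^2(Y',\Q)\to H^2(Y,\Q)$. By Proposition~\ref{prop defo}, the subspaces $\Def(Y,N_X)\subset\Def(Y)$ and $\Def(Y',N_{X'})\subset\Def(Y')$ are canonically identified with $\Def^\lt(X)$ and $\Def^\lt(X')$ respectively; via the matching periods they get identified with each other in a neighborhood of $0$. The universal families $\scrY,\scrY'$ on the lift of $\Delta$ carry simultaneous contractions $\scrY\to\scrX$ and $\scrY'\to\scrX'$ extending the terminalizations, along with a family bimeromorphism $\scrY\dashrightarrow\scrY'$ whose central fiber is $Y\dashrightarrow Y'$ and whose indeterminacy remains of codim $\geq 2$ under this locally trivial deformation. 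Composing gives the required bimeromorphism $\scrX\dashrightarrow\scrX'$ over $\Delta$. At an MT-general $t\in\Delta^*$, Corollary~\ref{cor bij mt general} forces this fiber map to be a biholomorphism, and these glue over $\Delta^*$ to yield $\scrX^*\cong(\scrX')^*$.

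The main obstacle is the simultaneous deformation of $Y\dashrightarrow Y'$, since the two $\Q$-factorial terminalizations are in general connected only by a sequence of flops. Carefully controlling how flops behave in period-compatible locally trivial families, and upgrading the Hodge-theoretic identification of $\Def(Y,N_X)$ with $\Def(Y',N_{X'})$ to a geometric family bimeromorphism, is the delicate point; one attractive way to sidestep it would be to find a common terminalization (a single $Y$ admitting contractions to both $X$ and $X'$) after further flops and then deform it as a single object.
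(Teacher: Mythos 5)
Your proposal has a genuine gap, and it sits exactly where you flag it yourself: the construction of the family bimeromorphism $\scrY\ratl\scrY'$ (equivalently $\scrX\ratl\scrX'$) over $\Delta$. Everything up to that point only produces two \emph{a priori unrelated} locally trivial families with matching marked periods. Matching periods alone does not give you a bimeromorphism between the fibers --- that is a Torelli-type statement which is proved only later (Section~\ref{section torelli}) and which itself relies on Corollary~\ref{corollary huybrechts strong}, i.e.\ on this very theorem; so you cannot appeal to it here. Your attempt to supply the bimeromorphism by deforming $Y\ratl Y'$ through the identification $\Def(Y,N_X)\cong\Def^\lt(X)\cong\Def^\lt(X')\cong\Def(Y',N_{X'})$ is only a Hodge-theoretic identification of base spaces; it does not produce a map of total spaces, and the assertion that the indeterminacy of $Y\ratl Y'$ ``remains of codimension $\geq 2$'' in the family is unsupported. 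Since the two terminalizations are in general related by a sequence of flops, making this work would require controlling flops in locally trivial K\"ahler families --- machinery the paper explicitly does not have (and the existence of $\Q$-factorial terminalizations in the non-projective fibers is itself only established \emph{after} the global Torelli theorem, in Section~\ref{section qfactorial terminalizations}). So the central step of the theorem is missing.

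The paper's proof goes a different and more constructive route, following Huybrechts' \cite[Theorem 4.6]{Huy99}: it never deforms $X'$ independently. One takes an ample $L'$ on $X'$, transports it to a big line bundle $L$ on $X$ via $\phi_*^{-1}$ (this is where the hypothesis on $\phi_*\colon\Pic(X)_\Q\to\Pic(X')_\Q$ enters essentially --- your proposal never really uses it), and deforms the \emph{pair} $(X,L)$ over a one-dimensional base with Picard number one on the general fiber. Corollary~\ref{corollary riemann roch via q} (Riemann--Roch through the BBF form) together with the projectivity criterion (Theorem~\ref{theorem projectivity criterion}) shows $h^0(\scrL_t^{\tensor m})$ is constant in $t$ for $m\gg0$, so the relative linear system gives a meromorphic $S$-map $\scrX\ratl\P_S(\pi_*\scrL^\vee)$, bimeromorphic onto its image; the family $\scrX'$ is \emph{defined} as the closure of that image, and its central fiber is identified with $X'$ because $H^0(X,mL)\cong H^0(X',mL')$ (isomorphism in codimension one) and $mL'$ is very ample on $X'$. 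The birational relation over $\Delta$ and the isomorphism over $\Delta^*$ come for free from this construction, rather than having to be manufactured afterwards. If you want to salvage your approach, the missing ingredient is precisely such a geometric mechanism for propagating $\phi$ along the deformation, and the section-ring construction is the standard way to get it.
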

\begin{proof}
The basic strategy of \cite[Theorem 4.6]{Huy99} remains unchanged, we will therefore only explain where we need to deviate from it. By Corollary \ref{corollary riemann roch via q}, there are polynomials $f_X(t)$ and $f_{X'}(t)$ with rational coefficients of degree $n=\dfrac{\dim X}{2}$ such that for any line bundle $L$ on $X$ a Hirzebruch--Riemann--Roch statement of the form $\chi(X,L)=f_X(q_X(\chern_1(L)))$ holds and similarly for $X'$. We may assume that $f_X\geq f_{X'}$ with respect to the lexicographic order and choose an ample line bundle $L'$ on $X'$ and denote by $L$ the corresponding $\Q$-line bundle on $X$. Replacing $L'$ by a multiple, we may assume that $L$ is integral. Let $\pi:(\scrX,\scrL)\to S$ be a locally trivial deformation of $(X,L)$ over a smooth one-dimensional base such that the Picard number of the general fiber of $\scrX\to S$ is one.  As in \cite[Theorem 4.6]{Huy99}, using the projectivity criterion from Theorem \ref{theorem projectivity criterion} one shows that $h^0(\scrL_t^{\tensor m})$ for $m\gg 0$ does not depend on $t\in S$ that the associated linear system gives a meromorphic $S$-morphism $\scrX\ratl \P_S(\pi_*\scrL^\vee)$ which is bimeromorphic onto its image. We obtain $\scrX'\to S$ as the closure of this image and one verifies as in \cite[Proposition 4.2]{Huy97} that $\scrX'\to S$ has the desired properties, in particular, that its central fiber is $X'$.
\end{proof}

This result can be reformulated as follows.
\begin{corollary}\label{corollary huybrechts strong}
Let $X$ and $X'$ be projective primitive symplectic varieties, and
let $\phi:X\ratl X'$ be a birational map which is an isomorphism in codimension $1$ such that $\phi_*: \Pic(X)_\Q \to \Pic(X')_\Q$ is well-defined and an isomorphism. Then for every choice of a marking $\mu:H^2(X,\Z)_\tf\to \Lambda$ there exists a marking $\mu':H^2(X',\Z)_\tf\to \Lambda$ such that the points $(X,\mu)$ and $(X',\mu')$ are inseparable points in the moduli space $\gothM_\Lambda$.
\end{corollary}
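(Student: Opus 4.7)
The plan is to deduce Corollary \ref{corollary huybrechts strong} directly from Theorem \ref{theorem huybrechts strong} by transporting the marking across the families it produces. First I would apply Theorem \ref{theorem huybrechts strong} to obtain one-parameter locally trivial families $f:\scrX\to\Delta$ with central fiber $X$ and $f':\scrX'\to\Delta$ with central fiber $X'$, together with an isomorphism $\Phi:\scrX^{*}\xrightarrow{\cong}(\scrX')^{*}$ of families over $\Delta^{*}$.

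Next I would extend the given marking $\mu$ to the whole family. Because $f$ is locally trivial and $\Delta$ is contractible, the direct image $R^{2}f_{*}\Z$ is a constant sheaf on $\Delta$, and the same is true of its torsion-free quotient by Corollary \ref{cor deformation symplectic} and \cite[Lemma 2.4]{BL16}. Thus the marking $\mu$ of the central fiber extends uniquely to a trivialization of $(R^{2}f_{*}\Z)_{\tf}\cong\Lambda_{\Delta}$, equivalently a compatible family of $\Lambda$-markings $\{\mu_t\}_{t\in\Delta}$ of the fibers $\scrX_{t}$, compatibly with $q$ (which is a topological invariant). Pulling back along $\Phi$ yields a compatible family of $\Lambda$-markings $\{\tilde\mu_{t}\}_{t\in\Delta^{*}}$ of the fibers of $(\scrX')^{*}$. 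Since $\Delta$ is simply connected, $(R^{2}f'_{*}\Z)_{\tf}$ is also a constant sheaf, so $\{\tilde\mu_{t}\}$ extends uniquely to a compatible family of markings $\{\mu'_{t}\}_{t\in\Delta}$ of $\scrX'\to\Delta$; set $\mu':=\mu'_{0}$.

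By the universal property used in the construction of $\gothM_\Lambda$ (Definition \ref{definition marked moduli}), the two marked families induce holomorphic maps $\alpha,\alpha':\Delta\to\gothM_\Lambda$ with $\alpha(0)=(X,\mu)$ and $\alpha'(0)=(X',\mu')$. By construction, the pullback marking agrees with the transported one on $\Delta^{*}$, so $(\scrX,\{\mu_{t}\})$ and $(\scrX',\{\mu'_{t}\})$ are isomorphic as $\Lambda$-marked families over $\Delta^{*}$. Hence $\alpha|_{\Delta^{*}}=\alpha'|_{\Delta^{*}}$. Taking a sequence $t_{n}\to 0$ in $\Delta^{*}$, both $(X,\mu)$ and $(X',\mu')$ are limits of the same sequence of points $\alpha(t_{n})=\alpha'(t_{n})$ in $\gothM_\Lambda$, which shows they are inseparable.

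Since Theorem \ref{theorem huybrechts strong} is already in hand, no genuinely new geometric input is required; the proof is a bookkeeping argument transporting markings through the isomorphism $\Phi$ over $\Delta^{*}$. The only point demanding mild care is verifying that the marking really does extend across the central fiber, which boils down to the triviality of the local system of torsion-free second cohomology---this in turn uses local triviality of the deformation together with \cite[Lemma 2.4]{BL16}. I do not anticipate any other obstacle.
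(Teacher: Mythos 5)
Your proposal is correct and is exactly the argument the paper has in mind: the corollary is stated as a direct reformulation of Theorem \ref{theorem huybrechts strong}, and the paper leaves implicit precisely the bookkeeping you carry out (extending the marking over $\Delta$ via the constancy of $(R^2f_*\Z)_\tf$ on a contractible base, transporting it through the isomorphism over $\Delta^*$, and concluding inseparability from the two classifying maps agreeing away from the origin). No gaps; your care about extending the marking across the central fiber is the only point that needed checking, and local triviality handles it as you say.
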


\section{Projective degenerations}\label{section proj degen}
The main goal of this section  is to prove the following result, which will be needed for the surjectivity of the period map in Section~\ref{section torelli}:

\begin{theorem}\label{filling holes}  Let $f:\mathscr{X}^*\to \Delta^*$ be a projective locally trivial family of primitive symplectic varieties with $\Q$-factorial terminal singularities such that the monodromy of $R^2f_*\Q_{\scrX^*}$ is finite.  Then there is a proper locally trivial family $g:\mathscr{Y}\to \Delta $ of primitive symplectic varieties whose restriction $\mathscr{Y}|_{\Delta^*}\to \Delta^* $ is isomorphic to the restriction of the base-change of $\scrX^*\to\Delta^*$ along a finite \'etale cover $\Delta^*\to\Delta^*$. 
\end{theorem}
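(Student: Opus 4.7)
The plan is to follow the argument of Koll\'ar--Laza--Sacc\`a--Voisin \cite{KLSV18} as closely as possible, making modifications for the singular setting using the deformation-theoretic and Hodge-theoretic tools assembled so far. After replacing $\Delta^*\to \Delta^*$ by a finite \'etale cover we may assume the monodromy of $R^2f_*\Q_{\scrX^*}$ is trivial. Using the projectivity of $f$, choose a relatively ample line bundle and invoke the valuative criterion on an appropriate Hilbert or PT moduli space to extend $\scrX^*$ to a projective flat family $\bar{\scrX}\to \Delta$. After a further finite base change and blowing up, semistable reduction yields a smooth projective total space $\wt{\scrX}\to \Delta$ with reduced simple normal crossings central fiber.

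Next I would run the MMP relative to $\Delta$, using \cite{BCHM10}, to produce a minimal dlt model $\scrZ\to \Delta$ with $K_{\scrZ}\numeq_\Delta 0$. Because the general fiber $\scrX^*_t$ is already a $\Q$-factorial terminal primitive symplectic variety, it is its own minimal model, so the MMP does not alter the generic fiber and $\scrZ|_{\Delta^*}\isom \scrX^*$ (up to the base change already taken). Only the central fiber $Z_0$ is modified; it ends up a dlt Calabi--Yau variety. The crucial step is then to show $Z_0$ is \emph{irreducible}, at which point the extended symplectic form (constructed via Kebekus--Schnell extension \cite{KS18} applied to a resolution of $\scrZ$) gives $Z_0$ the structure of a primitive symplectic variety with at worst klt singularities; the condition $h^{1}(\sO_{Z_0})=0$ follows by upper semicontinuity from triviality of $h^{1}(\sO_{\scrX_t})$.

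To prove irreducibility I would adapt the Type I/II/III dichotomy of \cite[\S 6]{KLSV18}. The hypothesis of trivial monodromy, combined with the Clemens--Schmid exact sequence applied to a semistable resolution, shows that the degeneration is of Type I, i.e.\ the limit mixed Hodge structure on $H^2$ is pure. The singular Verbitsky-type injectivity, Proposition \ref{sym}, is then precisely the input needed to propagate this through the cup product map $\Sym^n H^2\into H^{2n}$ and conclude that the limit mixed Hodge structure on $H^{2n}$ is also pure, hence that the dual complex of the semistable model consists of a single vertex. This is the step where the singular case departs from the smooth case of \cite{KLSV18}, and it is exactly what our singular Fujiki--Verbitsky results make possible.

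Finally, to pass from $\scrZ\to \Delta$ to the desired \emph{locally trivial} family $\scrY\to \Delta$, observe that $Z_0$ is a primitive symplectic variety, so by Corollary \ref{cor deformation symplectic} there is a locally trivial Kuranishi family $\scrY\to \Def^\lt(Z_0)$ with $\scrY_0 = Z_0$. The restriction $\scrZ|_{\Delta^*}$ maps to $\Def^\lt(Z_0)$ via the classifying map (any flat family of PSVs restricts to a locally trivial one by Corollary \ref{cor deformation symplectic}), and pulling back $\scrY$ along this map, possibly after a further bimeromorphic modification of the total space supported on the central fiber, produces the desired locally trivial extension of (a finite \'etale base change of) $\scrX^*$. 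The main obstacle in the whole argument is the irreducibility of $Z_0$; everything else is a fairly direct adaptation of \cite{KLSV18}, and it is essentially for this step that Proposition \ref{sym} was established.
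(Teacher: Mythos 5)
Your overall strategy---reduce to trivial monodromy, extend to a projective family over $\Delta$, run the MMP as in \cite{KLSV18}, and use Proposition \ref{sym} to show that purity of the limit Hodge structure on $H^2$ forces a non-uniruled component of the special fiber carrying a generically nondegenerate $2$-form---is essentially the route the paper takes. The paper packages the Hodge-theoretic input as Proposition \ref{proposition sympl comp} (phrased via the decomposition theorem and the specialization map rather than Clemens--Schmid and the dual complex, but to the same effect) and then feeds the non-uniruledness into \cite[Theorem 2.1]{KLSV18} to obtain an irreducible $K$-trivial special fiber with canonical singularities, which is symplectic by Theorem \ref{theorem basic symplectic}. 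Up to that point your sketch is a correct adaptation.

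The genuine gap is in your final paragraph, where you produce the \emph{locally trivial} family. Corollary \ref{cor deformation symplectic} says that small \emph{locally trivial} deformations of a primitive symplectic variety are again primitive symplectic; it does not say that an arbitrary flat family of primitive symplectic varieties is locally trivial. The family $\scrZ\to\Delta$ you have built is certainly not locally trivial at $0$: the nearby fibers are $\Q$-factorial terminal while $Z_0$ is only canonical, so the classifying map $\Delta\to\Def(Z_0)$ does not factor through $\Def^\lt(Z_0)$, and pulling back the locally trivial Kuranishi family of $Z_0$ would in any case yield fibers with the same singularities as $Z_0$ rather than the original terminal fibers $\scrX^*_t$. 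The missing idea is to replace $Z_0$ by a $\Q$-factorial terminalization $Y_0\to Z_0$ (which exists since $Z_0$ is projective, by \cite{BCHM10}) and invoke Namikawa: every deformation of the $\Q$-factorial terminal symplectic variety $Y_0$ is locally trivial by \cite[Main Theorem]{Nam06}, and the induced map $\Def(Y_0)\to\Def(Z_0)$ is finite and surjective by \cite[Theorem 1]{Nam06}, so after a further finite base change $\Delta'\to\Delta$ the classifying map lifts and the pullback of the universal family over $\Def(Y_0)$ is the desired locally trivial family. Over $\Delta'^*$ its fibers map properly birationally onto the $\scrX^*_t$, and a proper birational morphism between $\Q$-factorial terminal $K$-trivial varieties is an isomorphism. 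Your phrase ``possibly after a further bimeromorphic modification of the total space supported on the central fiber'' is standing in for exactly this argument, which is the content of Proposition \ref{prop defo} and the surrounding discussion and cannot be omitted.
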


Theorem \ref{filling holes} is proven for smooth $\mathscr{X}^*\to \Delta^*$ in \cite[Theorem 1.7]{KLSV18}, and the proof in our slightly more general setting involves very mild modifications of the same arguments given Proposition \ref{sym}, albeit rearranged slightly and with some simplifications.

A crucial step is the following version of \cite[Theorem 2.1]{KLSV18} which uses the MMP to produce nice models for degenerations of $K$-trivial varieties.

\begin{theorem}(Theorem 2.1 and Remarks 2.3 and 2.4 of \cite{KLSV18})\label{thm mmp filling}  Let $f:\scrX\to\Delta$ be a projective family whose generic fiber is a $K$-trivial variety with $\Q$-factorial terminal singularities and such that at least one component of the special fiber is not uniruled.  Then there is a projective family $g:\scrY\to\Delta$ for which:
\begin{enumerate}
\item the restriction $\scrY|_{\Delta^*}\to\Delta^*$ is isomorphic to the restriction of the base-change of $\scrX\to\Delta$ along a finite cover $\Delta\to\Delta$;
\item the special fiber is a $K$-trivial variety with canonical singularities;
\item the total space $\scrY$ has terminal singularities.
\end{enumerate}
\end{theorem}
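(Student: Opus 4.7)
My plan is to follow the strategy of \cite{KLSV18} closely: combine semistable reduction with a relative minimal model program over $\Delta$, exploiting the fact that the base is one-dimensional and the generic fiber is $K$-trivial. After replacing $\Delta$ by a suitable finite cover $t\mapsto t^N$, I apply semistable reduction (Kempf--Knudsen--Mumford--Saint-Donat), preceded by a resolution of singularities of the total space, to obtain a projective family $\scrX' \to \Delta$ whose total space is smooth and whose central fiber $\scrX'_0=\sum D_i$ is a reduced simple normal crossings divisor. The birational modification $\scrX'\ratl \scrX$ is an isomorphism over $\Delta^*$, so the generic fiber is unchanged and still has $\Q$-factorial terminal singularities with trivial canonical class.

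Next, I run the relative $(K_{\scrX'/\Delta})$-MMP with scaling of a relatively ample divisor. Because the generic fiber has trivial canonical class, every step of the MMP is an isomorphism over $\Delta^*$ and is supported in the central fiber; termination with scaling over a one-dimensional base follows from \cite{BCHM10}. The output is a projective $\Q$-factorial family $\scrY\to\Delta$ with terminal total space and $K_{\scrY/\Delta}$ relatively nef, and $\scrY|_{\Delta^*}\cong \scrX'|_{\Delta^*}$; together with the initial base change this establishes (1), and the fact that the MMP preserves terminal singularities of the total space gives (3).

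To obtain (2), I apply relative abundance: a nef $\Q$-divisor on a $\Q$-factorial terminal family over a smooth curve whose restriction to the generic fiber is $\Q$-trivial is itself $\Q$-trivial in a neighborhood of the special point, because the only base-point freedom needed is vertical. Hence $K_{\scrY/\Delta}\sim_\Q 0$ near $0\in\Delta$, and adjunction yields $K_{\scrY_0}\sim_\Q 0$ with $\scrY_0$ canonical. The non-uniruledness hypothesis ensures that $\scrY_0$ is nonempty and is not the birational transform of the uniruled part of $\scrX'_0$: by the theorem of Miyaoka--Mori, any $K$-negative extremal ray has uniruled exceptional locus, so a non-uniruled component of the original special fiber cannot be contracted during the relative MMP and its strict transform persists in $\scrY_0$.

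The main obstacle I expect is the careful bookkeeping of MMP with scaling from the semistable model: one must simultaneously secure termination in the relative setting, preservation of terminal singularities of the total space, and applicability of relative abundance. These points are treated in detail in \cite{KLSV18} in the smooth generic-fiber case; the only adjustment needed in our slightly more general setting, where the generic fiber already has $\Q$-factorial terminal singularities rather than being smooth, is that semistable reduction must be preceded by a desingularization of the total space. This is harmless since all subsequent MMP operations are supported in the special fiber and leave the generic fiber untouched, so the generic fiber throughout the argument remains the $\Q$-factorial terminal variety we started with (up to the finite base change).
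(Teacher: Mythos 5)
The paper does not actually reprove this statement—it is imported wholesale from \cite{KLSV18} (Theorem 2.1 together with Remarks 2.3 and 2.4), with only the observation that conclusion (3) follows from the proof given there. So the question is whether your reconstruction of that argument is sound, and it has two genuine gaps. First, your treatment of the singular generic fiber is internally inconsistent: since the generic fiber has nontrivial $\Q$-factorial terminal singularities, any desingularization of the total space necessarily modifies the fibers over $\Delta^*$, so the semistable model $\scrX'$ is \emph{not} isomorphic to $\scrX$ over $\Delta^*$, and $K_{\scrX'}$ is \emph{not} relatively trivial over $\Delta^*$ (it acquires positive discrepancies over the singular loci of the fibers). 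Hence the relative MMP is not ``supported in the central fiber''; it also runs over $\Delta^*$, and its output there is merely \emph{some} relative minimal model of $\scrX^*$---isomorphic to it in codimension one and connected to it by flops, but a priori not isomorphic to it. Conclusion (1) is therefore not established, and it is needed on the nose in the proof of Theorem \ref{filling holes}, where one uses an actual birational \emph{morphism} $\scrY_{t'}\to\scrX_t$ between $\Q$-factorial terminal $K$-trivial varieties to conclude it is an isomorphism. Handling this is precisely the content of the cited remarks of \cite{KLSV18}.

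Second, the plain relative $K$-MMP does not yield conclusion (2): it only makes $K_{\scrY}$ relatively nef, hence relatively $\Q$-trivial by your Zariski-lemma argument, but the special fiber can perfectly well remain reducible (type II and III Kulikov models of K3 degenerations are relatively minimal with reducible, entirely uniruled special fiber). Since ``variety'' in this paper means irreducible and reduced, conclusion (2) includes irreducibility. The non-uniruledness hypothesis is not there to guarantee that the good component \emph{survives}; it is what makes it possible to \emph{contract all the other components}. The argument of \cite{KLSV18} runs the MMP for the pair $(\scrX',\scrX'_0-D_{i_0})$, where $D_{i_0}$ is the non-uniruled component; since $\scrX'_0\sim_{f} 0$ this is numerically a $(-D_{i_0})$-MMP concentrated on the special fiber, and the negative semi-definiteness of the intersection form on fiber components forces every component other than the strict transform of $D_{i_0}$ to be contracted. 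Only then does adjunction for the resulting dlt pair $(\scrY,\scrY_0)$ give that the now irreducible, normal special fiber is $K$-trivial with canonical singularities. Your appeal to ``relative abundance plus adjunction'' asserts rather than proves both the irreducibility and normality of $\scrY_0$ and the canonicity of its singularities.
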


Note that the third statement follows from the proof in \cite{KLSV18}.  Theorem \ref{thm mmp filling} reduces the proof of Theorem \ref{filling holes} to showing that the assumption on the local monodromy implies that some component of a degeneration must be non-uniruled, and this is accomplished by the following:

\begin{proposition}\label{proposition sympl comp}  Let $f:\mathscr{X}\to \Delta$ be a flat projective family such that:
\begin{enumerate}

\item the restriction $\scrX^*:=\scrX|_{\Delta^*}\to\Delta^*$ is a locally trivial family of primitive symplectic varieties;
\item the local monodromy of $R^2f_*\Q_\scrX$ is trivial;
\item the special fiber $X$ has no multiple components;
\item the total space $\scrX$ has log terminal singularities.

\end{enumerate}
Then a resolution of some component of the special fiber $X$ has a generically nondegenerate holomorphic 2-form. 
\end{proposition}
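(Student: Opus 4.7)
The plan is to adapt the strategy of \cite[Theorem 6.1]{KLSV18} to the singular setting, using Proposition~\ref{sym} in place of the smooth Verbitsky theorem. The idea is to produce, via semistable reduction and a Hodge-theoretic extension argument, a holomorphic $2$-form on the total space of a modification whose restriction to some non-exceptional component of the special fiber is generically nondegenerate.

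First, after a finite base change $\Delta\to\Delta$ making the monodromy unipotent, I take a log resolution to produce a semistable model $\tilde f:\tilde\scrX\to\Delta$ with $\tilde\scrX$ smooth and reduced simple-normal-crossings special fiber $\tilde X=\bigcup_i X_i$. The klt hypothesis (4) and the reducedness hypothesis (3) ensure this semistable reduction is well-behaved, and over $\Delta^*$ one can additionally arrange by Lemma~\ref{lemma simultaneous resolution} that $\tilde\scrX^*\to\scrX^*$ is a simultaneous resolution. The relative symplectic form on $\scrX^*_{\reg}/\Delta^*$ pulls back to a section of $\Omega^2_{\tilde\scrX^*/\Delta^*}$, which by Steenbrink's theory extends to a global section of $\Omega^2_{\tilde\scrX/\Delta}(\log\tilde X)$. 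Its residues along the components of $\tilde X$ are governed by the nilpotent logarithm of monodromy $N$ on the limit mixed Hodge structure $H^2_{\lim}$; by hypothesis (2) combined with our base change, the monodromy on $R^2 f_*\Q_\scrX$ is trivial and $N|_{F^2}=0$, so the residues vanish. Consequently the form extends to a holomorphic section $\tilde\sigma\in H^0(\tilde\scrX,\Omega^2_{\tilde\scrX/\Delta})$, restricting to a holomorphic $2$-form $\sigma_i:=\tilde\sigma|_{X_i}$ on each smooth component $X_i$.

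I conclude by contradiction. Suppose $\sigma_i$ is everywhere degenerate on each $X_i$; then $\sigma_i^n=0\in H^0(X_i,\omega_{X_i})$ where $2n$ is the relative dimension. The section $\tilde\sigma^n\in H^0(\tilde\scrX,\omega_{\tilde\scrX/\Delta})$ is nonzero on every generic fiber because $\tilde\scrX_t\to\scrX_t$ is an isomorphism over the smooth locus where the symplectic form is nondegenerate. By adjunction for semistable degenerations, $\omega_{\tilde\scrX/\Delta}|_{X_i}=\omega_{X_i}(D_i)$ with $D_i=X_i\cap\overline{\tilde X\setminus X_i}$, and the vanishing of $\sigma_i^n$ in $H^0(X_i,\omega_{X_i})$ forces the vanishing of $\tilde\sigma^n|_{X_i}$ in $H^0(X_i,\omega_{X_i}(D_i))$ for every $i$; so $\tilde\sigma^n|_{\tilde X}=0$ in $H^0(\tilde X,\omega_{\tilde X})$. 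This contradicts the Steenbrink identification of $F^{2n}H^{2n}_{\lim}$ with $H^0(\tilde X,\omega_{\tilde X})$ (valid under our monodromy hypothesis) combined with Proposition~\ref{sym} applied to the pure limit mixed Hodge structure, which forces $[\sigma^n]\ne 0$ in $F^{2n}H^{2n}_{\lim}$. Thus some $X_i$ has a generically nondegenerate 2-form $\sigma_i$, and since $\sigma_i^n\ne 0\in H^0(X_i,\omega_{X_i})$ this $X_i$ is not uniruled, hence not exceptional over $\scrX$, but rather (a resolution of) an original component of $X$. The main technical obstacle is rigorously establishing the logarithmic extension of $\tilde\sigma$ and the residue-vanishing in our singular-total-space setting, in particular verifying that triviality of monodromy on $H^2$ of the original singular family forces the residues of $\tilde\sigma$ on the resolution to vanish.
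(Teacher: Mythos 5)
Your high-level strategy---push the symplectic form into the special fiber, use Proposition~\ref{sym} to see that its $n$-th power survives in the limit, and use uniruledness of exceptional divisors over a log terminal total space to land on a component of $X$ itself---matches the paper's, but your implementation is genuinely different: the paper takes a single log resolution of $(\scrX,X)$ with \emph{no} base change and argues entirely with the specialization map $\sp:H^*(Y,\Q)\to H^*(\scrY_\infty,\Q)$, the decomposition theorem, and the Deligne splitting $I^{2,0}$ of the mixed Hodge structures involved; no differential forms, residues, or semistable models appear. The difficulty is that the step you yourself flag as ``the main technical obstacle'' is not a technicality but the heart of the matter. The assertion that triviality of the monodromy forces the residues of the extended section of $\Omega^2_{\tilde\scrX/\Delta}(\log\tilde X)$ to vanish, so that the limit is an honest section of $\Omega^2_{\tilde\scrX/\Delta}$ restricting to $\sigma_i\in H^0(X_i,\Omega^2_{X_i})$ on each component, is left unproved, and ``the residues are governed by $N|_{F^2}$'' is not an accurate description of the mechanism: $N$ shifts the weight filtration, while the residues are read off from the graded pieces of the weight spectral sequence. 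The correct statement is that $N=0$ makes $H^2_{\mathrm{lim}}$ pure of weight $2$, so its $(2,0)$-part can only come from $\bigoplus_i H^{2,0}(X_i)$ on the $E_1$-page (deeper strata contribute other Hodge types); this is exactly what the paper extracts, in equivalent language, from surjectivity of $\sp$ onto the pure limit together with injectivity of $I^{2,0}H^2(Y)\to I^{2,0}H^2(\tilde Y)$ on the normalization. Without this, your final contradiction collapses: the restriction of $\tilde\sigma^n$ to $X_i$ a priori only lies in $H^0(X_i,\omega_{X_i}(D_i))$, and vanishing of $\sigma_i^n\in H^0(X_i,\omega_{X_i})$ for all $i$ does not contradict nonvanishing of the limiting class in $H^0(\tilde X,\omega_{\tilde X})$.

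A secondary but real issue is the detour through semistable reduction. It requires a finite base change $t\mapsto t^d$, after which the total space is a cyclic cover of $\scrX$ branched along $X$ and need not remain log terminal, so \cite[Corollary~1.5]{HM07} no longer applies off the shelf to divisors of $\tilde\scrX$ that are exceptional over $\scrX$; one must also transport the conclusion from components of $\tilde X$ back to components of the original special fiber. Hypotheses (3) and (4) of the proposition are calibrated precisely so that a single log resolution suffices and Hacon--McKernan applies directly. I would recommend dropping the semistable reduction and the residue computation altogether and replacing them with the limit mixed Hodge structure argument sketched above; once you do, your argument converges to the paper's.
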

\begin{proof}

Let $2n$ be the fiber dimension of $f$ and take $\pi:(\scrY,Y)\to(\scrX,X)$ to be a log resolution and $g:=f\circ \pi:\scrY\to\Delta$.  After possible shrinking $\Delta$, the morphism \mbox{$\pi:\scrY\to\scrX$} is a fiberwise resolution over $\Delta^*$.  Recall that there is a specialization map $\sp: H^*(Y,\Q)\to H^*(\mathscr{Y}_\infty,\Q)$ which is topologically constructed as follows.  After possibly shrinking $\Delta$ we let $\mathscr{Y}_\infty=e^*\mathscr{Y}_{\Delta^*}$, where $e:\mathbb{H}\to \Delta^*$ is the universal cover.  Then $\sp$ is the pullback along the natural map $\scrY_\infty\to\scrY$ composed with the isomorphism induced by the inclusion $Y\to\scrY$ which is a homotopy equivalence.  Note that $\sp$ is a ring homomorphism, and that the inclusion $\scrY_t\to \scrY_\infty$ of a fiber above $t\in\Delta^*$ is also a homotopy equivalence, as locally trivial families are topologically (even real analytically) trivial \cite[Proposition~5.1]{AV19}.  We can also view $H^*(\scrY_\infty,\Q)$ as the nearby cycles $\psi Rf_*\Q_{\scrY}$ (up to a shift) and the specialization map as the natural map $i^*Rf_*\Q_{\scrY}\to\psi Rf_*\Q_{\scrY}$ by proper base-change, where $i:\{0\}\to\Delta$ is the inclusion.  By Saito's theory \cite{Sai88,Sai90}, this is a morphism of mixed Hodge structures, the mixed Hodge structure on $\psi Rf_*\Q_\scrY$ being the limit mixed Hodge structure.

Now for $t\in\Delta^*$, the pullback $\pi_t^*:H^*(\scrX_t,\Q)\to H^*(\scrY_t,\Q)$ induces an injection $\mathrm{gr}^W_{k}H^k(\scrX_t,\Q)\to H^k(\scrY_t,\Q)$ for all $k$.  By Theorem \ref{sym}, for $k\leq n$ we have an induced injection $\Sym^kH^2(\scrX_t,\Q)\to H^{2k}(\scrY_t,\Q) $ and therefore also an injection $\Sym^kH^2(\scrX_\infty,\Q)\to H^{2k}(\scrY_\infty,\Q)$.

\begin{claim}  The image of the specialization $\sp:H^{2k}(Y,\Q)\to H^{2k}(\scrY_\infty,\Q)$ contains the image of $\Sym^kH^2(\scrX_\infty,\Q)$ for $k\leq n$.\end{claim}

\begin{proof} By the semisimplicity of the category of variations of polarized integral Hodge structures, $\Sym^k R^2f_*\Q_{\scrX^*}$ is a summand of $R^{2k}g_*\Q_{\scrY^*}$ for $k\leq n$.  By the decomposition theorem \cite[\S 5.3]{Sai88}, the intermediate extension $j_{!*}(\Sym^kR^{2}f_*\Q_{\scrX^*}[1])[-2k-1]$ is a summand of $Rg_*\Q_{\scrY}$, where $j:\Delta^*\to\Delta$ is the inclusion.  As the monodromy of $\Sym^kR^2f_*\Q_{\scrX^*}$ is trivial, the specialization map
\[i^*j_{!*}(\Sym^kR^{2}f_*\Q_{\scrX^*}[1])\to \psi j_{!*}(\Sym^kR^{2}f_*\Q_{\scrX^*}[1])\]
is an isomorphism, hence the claim.
\end{proof}

Now, $H^2(\scrY_\infty,\Q)$ has the same Hodge numbers as the general fiber (since the monodromy is trivial), and it follows that there is an element $w\in I^{2,0}H^2(Y,\Q)$ mapping to a generator of $I^{2,0}H^2(\scrY_\infty,\Q)$. Here, $I^{2,0}$ denotes the $(2,0)$-part of the Deligne splitting, see e.g. \cite[Lemma-Definition~3.4]{PS08}.  Moreover, $w^{n}\neq 0$ by the claim.  The same is true on the normalization $\tilde{Y}\to Y$, so some component of $Y$ has a generically nondegenerate holomorphic 2-form.  Finally, since $\scrX$ is log terminal, by \cite[Corollary~1.5]{HM07} the exceptional divisors of $\pi:\scrY\to\scrX$ are uniruled, so the same must be true of $X$.
\end{proof}



\begin{proof}[Proof of Theorem \ref{filling holes}]  Obviously we may assume the monodromy of $R^2f_*\Q_{\scrX^*}$ is trivial.  Let $f:\scrX\to\Delta$ be a flat projective family restricting to the base change of $\scrX^*$ over $\Delta^*$; we may assume the special fiber has no multiple component.  By running the MMP as in the first part of \cite[Theorem 2.1]{KLSV18}, we may assume $\scrX$ has terminal singularities, and so by Proposition~\ref{proposition sympl comp} and Theorem \ref{thm mmp filling} we may assume the special fiber $X$ is a $K$-trivial variety with canonical singularities.  By the proposition again and Theorem~\ref{theorem basic symplectic}, $X$ is symplectic.  Take a $\Q$-factorial terminalization $\pi:Y \to X$ and consider the diagram \eqref{eq defo diag} for $\pi$. With the notations used there, the deformation $\scrY \to \Def(Y)$ is locally trivial by \cite[Main Theorem]{Nam06}. By \cite[Theorem~1]{Nam06} the induced map $p:\Def(Y) \to \Def(X)$ is finite and surjective. Thus, the classifying map $\Delta\to \Def(X)$ of the family $\scrX\to\Delta$ can be lifted to $\Def(Y)$ over a finite cover $\Delta' \to \Delta$. The pullback $\scrY_{\Delta'}$ is then the claimed family; it only remains to show that it is isomorphic to the pullback $\scrX_{\Delta'}$ outside the central fiber.  This is because for $t'\in \Delta'^*$ mapping to $t\in \Delta^*$ we have that $\mathscr{Y}_{t'} \to \scrX_t$ is a proper birational morphism between $\Q$-factorial terminal $K$-trivial varieties and thus is an isomorphism.
\end{proof}

\begin{remark}
The techniques of \cite{KLSV18} are used to \emph{fill in} varieties over projective period points in the interior of the period domain. We would like to point out that in the smooth case this technique of ``filling holes'' has been used independently by Odaka--Oshima for a different purpose, see the second paragraph in the first proof of Claim~8.10 of \cite{OO18}.
\end{remark}

\section{Monodromy and Torelli theorems}\label{section torelli}
Fix a lattice $\Lambda$ and denote its quadratic form by $q$.  
\begin{definition}
We say that a Hodge structure on $\Lambda$ is \emph{semi-polarized} (by the form $q$) if $q:\Lambda\otimes\Lambda\to\Z(-2)$ is a morphism of Hodge structures.  We furthermore say a semi-polarized Hodge structure is \emph{hyperk\"ahler} if it is pure of weight two with $h^{2,0}=h^{0,2}=1$, the signature of $q$ is $(3,b_2-3)$, and $q$ is positive-definite on the real space underlying $H^{2,0}\oplus H^{0,2}$.  Hyperk\"ahler Hodge structures on $\Lambda$ are parametrized by the period domain
\[\Omega_\Lambda:=\{[\sigma]\in \P(\Lambda_\C)\mid q(\sigma)=0, q(\sigma,\bar\sigma)>0\}.  \]
\end{definition}

 Let $X^+$ be a primitive symplectic variety with $(H^2(X^+,\Z)_\tf,q_{X^+})\cong (\Lambda,q)$, and let $\gothM^+$ be the moduli space of $\Lambda$-marked locally trivial deformations of $X^+$.  Note that $\gothM^+$ is a union of connected components of the full moduli space $\gothM_\Lambda$ of $\Lambda$-marked primitive symplectic varieties from Section \ref{subsection inseparable}.

Set $\Omega:=\Omega_\Lambda$.  We have a period map $P:\mathfrak{M}^+\to\Omega$ which is a local isomorphism by the local Torelli theorem (Proposition \ref{proposition local torelli}).  Furthermore, inseparable points of $\mathfrak{M}^+$ lie above proper Mumford--Tate subdomains of $\Omega$ by Corollary \ref{cor bij mt general}, so as in\footnote{Huybrechts uses that the inseparability only occurs above Noether--Lefschetz loci, but the same argument works for any countable union of proper complex analytic subvarieties.} \cite[Corollary 4.10]{Huy12} we have a factorization
\[\xymatrix{
&\overline{\mathfrak{M}}^+\ar[rd]^{\xxoverline{P}}&\\
\mathfrak{M}^+\ar[ru]^{H}\ar[rr]_{P}&&\Omega
}\]
where $H$ is the Hausdorff reduction of $\mathfrak{M}^+$ and $\xxoverline{P}$ is a local homeomorphism.  For each $x\in \overline{\mathfrak{M}}^+$, a local basis is provided by images $H(B)$ of open balls $x\in B\subset\mathfrak{M}^+$ over which there is a universal family for $x$.

Note that $\O(\Lambda)$ acts on each of $\mathfrak{M}^+$, $\overline{\mathfrak{M}}^+$, and $\Omega$ by changing the marking, and the three maps $H,P,\xxoverline{P}$ respect these actions.  For any connected component $\gothM$ of $\gothM^+$, we define $\Mon(\mathfrak{M})\subset O(\Lambda)$ to be the image of the monodromy representation on second cohomology, which is defined up to conjugation.  

The goal of this section is to show:
\begin{theorem}\label{theorem torelli}Assume $\rk(\Lambda)\geq 5$ and let $\gothM$ be a connected component of $\gothM^+$.  
\begin{enumerate}
\item The monodromy group $\Mon(\gothM)\subset \O(\Lambda)$ is of finite index;
\item $\xxoverline{P}$ is an isomorphism of $\overline{\mathfrak{M}}$ onto the complement in $\Omega$ of countably many maximal Picard rank periods;
\item If $X^+$ is $\Q$-factorial and terminal, then the same is true of every point $(X,\mu)\in\mathfrak{M}$ and $\xxoverline{P}$ is an isomorphism of $\overline{\mathfrak{M}}$ onto $\Omega$.
\end{enumerate}
\end{theorem}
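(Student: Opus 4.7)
The plan is to interleave three ingredients: local Torelli (Proposition \ref{proposition local torelli}), which makes $\overline{P}$ a local homeomorphism with open image; the filling theorem (Theorem \ref{filling holes}), which supplies limits over missing periods in the $\Q$-factorial terminal case; and Verbitsky's Ratner-theoretic orbit-closure classification, which requires $\rk(\Lambda)\geq 5$, to control the monodromy. I would prove parts (3), (2), (1) in that order, because part (3) provides the cleanest setting to deploy both the injectivity and surjectivity arguments, and the other two parts follow from it by a terminalization reduction and a monodromy argument respectively.

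\emph{Part (3): the $\Q$-factorial terminal case.} Corollary \ref{corollary deformation qfactorial} together with openness of terminal singularities propagates the $\Q$-factorial terminal property to every point of $\mathfrak{M}$ once one point has it. For \emph{injectivity} of $\overline{P}$, suppose $x_1,x_2\in\overline{\mathfrak{M}}$ map to the same period $p$. Theorem \ref{theorem inseparable implies birational} produces a bimeromorphic map $X_1\dashrightarrow X_2$ of the underlying varieties, which must be an isomorphism in codimension one since both targets are $\Q$-factorial and terminal; Corollary \ref{corollary huybrechts strong} then identifies $x_1$ and $x_2$ in $\overline{\mathfrak{M}}$. For \emph{surjectivity}, fix $p\in\Omega$ and choose a small disk $\Delta\to\Omega$ through $p$ with $\Delta^{*}$ mapping into the open set $\overline{P}(\overline{\mathfrak{M}})$; lift (after a finite base change of $\Delta^{*}$ if necessary) to a locally trivial family $\scrX^{*}\to\Delta^{*}$ in $\mathfrak{M}$. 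Combining the density of projective periods (Corollary \ref{corollary density of projectives2}) with the projectivity criterion (Theorem \ref{theorem projectivity criterion}) allows us to arrange $\scrX^{*}$ to be projective. Theorem \ref{filling holes} then yields a locally trivial filling $\scrY\to\Delta$ whose central fiber, equipped with the marking propagated from $\scrX^{*}$, supplies a preimage of $p$ in $\overline{\mathfrak{M}}$.

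\emph{Parts (2) and (1): the general case and monodromy.} For a general component $\mathfrak{M}$, choose a projective $X\in\mathfrak{M}$ via Corollary \ref{corollary density of projectives2} and a $\Q$-factorial terminalization $\pi:Y\to X$. Let $\mathfrak{N}$ be the connected component of the $\Lambda'$-marked moduli space containing the marked $Y$, where $\Lambda':=H^2(Y,\Z)_\tf$; part (3) applied to $\mathfrak{N}$ gives an isomorphism $\overline{P}_\mathfrak{N}:\overline{\mathfrak{N}}\to\Omega_{\Lambda'}$. Proposition \ref{prop defo} together with Corollary \ref{corollary betti locus} identifies $\overline{\mathfrak{M}}$ with the locus in $\overline{\mathfrak{N}}$ lying above $\Omega_{\Lambda'}\cap N_\Q^{\perp}$, where $N$ is the exceptional lattice of Lemma \ref{lemma symplektisch}. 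Via the orthogonal splitting $\Lambda'_\Q\cong\Lambda_\Q\oplus N_\Q$ this translates to $\Omega_\Lambda$ minus the countable collection of periods where the terminalization acquires extra Hodge classes beyond those forced by $\Lambda$, namely the maximal Picard rank periods. Finally, finite-index monodromy follows once $\overline{P}$ is known to be bijective onto a conull open subset of $\Omega$: Verbitsky's orbit-closure classification implies that a subgroup of $\O(\Lambda)$ whose orbits densely fill $\Omega$ must have finite index, and $\Mon(\mathfrak{M})$ qualifies because its orbits exhaust the conull image of $\overline{P}$.

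\emph{Main obstacle.} The most delicate point is the surjectivity step of part (3): the filling $\scrY\to\Delta$ produced by Theorem \ref{filling holes} must remain in the \emph{same} connected component $\mathfrak{M}$. This requires carefully tracking how the marking changes under the finite base change $\Delta\to\Delta$ introduced in Theorem \ref{filling holes} and possibly refining the disk so that the marking lifts consistently. A secondary technical point in part (2) is the precise identification of ``extra Hodge classes on $Y$'' with ``maximal Picard rank on $X$'', which relies on the orthogonal splitting of Lemma \ref{lemma symplektisch} and the fact that the exceptional lattice $N$ remains Hodge-Tate only along the stated sub-locus.
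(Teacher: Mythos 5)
Your injectivity argument for part (3) is backwards, and this is the central gap. You write that if $x_1,x_2\in\overline{\gothM}$ map to the same period, then Theorem \ref{theorem inseparable implies birational} produces a bimeromorphic map between the underlying varieties. But the hypothesis of that theorem is that $(X_1,\mu_1)$ and $(X_2,\mu_2)$ are \emph{already non-separated} in $\gothM_\Lambda$; having equal periods does not give you that. Deducing inseparability (equivalently, equality in the Hausdorff reduction) from equality of periods is precisely the content of the global Torelli theorem, so the argument is circular. Similarly, Corollary \ref{corollary huybrechts strong} only produces \emph{some} marking $\mu'$ on $X_2$ making the points inseparable, not the given one. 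The paper's injectivity argument is entirely different and cannot be bypassed: one first proves a boundedness statement (via Matsusaka and Hilbert schemes) showing $\xxoverline{P}^{-1}(p_0)$ is finite for one suitable period $p_0$ of rational rank zero; then the density of the $\O(\Lambda)$-orbit of $p_0$ (Proposition \ref{proposition orbits}, where $\rk(\Lambda)\geq 5$ enters) forces the fibers of $\xxoverline{P}$ over the whole rational-rank-zero locus $\Omega_{\rrk=0}$ to be finite of \emph{constant} size, making $\xxoverline{P}$ a covering map there; finally $\Omega_{\rrk=0}$ is simply connected (removing countably many real-codimension-$\geq 3$ submanifolds from $\Omega$), so the covering has degree one. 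No bimeromorphic comparison of fibers is used at this stage.

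Two further problems. First, your part (1) rests on the claim that a subgroup of $\O(\Lambda)$ with dense orbits in $\Omega$ has finite index; this is false in general (thin subgroups can have dense orbits), and Proposition \ref{proposition orbits} is a statement \emph{about} finite-index subgroups, not a criterion for finite index. The correct argument is that the finite-degree covering $\xxoverline{P}:\overline{\gothM}^+_{\rrk=0}\to\Omega_{\rrk=0}$ over a path-connected base shows $\gothM^+$ has finitely many components, and $\Mon(\gothM)$ is the stabilizer of one component under the $\O(\Lambda)$-action. Second, your reduction of part (2) to part (3) via a terminalization does not work as stated: Proposition \ref{prop defo} is a statement about Kuranishi germs and does not globally identify $\overline{\gothM}$ with the locus of $\overline{\gothN}$ over $\Omega_{\Lambda'}\cap N_\Q^\perp$ (indeed, if it did, part (3) applied to $\gothN$ would give surjectivity of $\xxoverline{P}$ onto \emph{all} of $\Omega_\Lambda$, which is stronger than part (2) claims and is not proved in the paper). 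The paper instead proves part (2) directly in the general case: the image of $P$ is open and $\Mon(\gothM)$-invariant, hence by the orbit-closure classification it suffices to hit a very general point of each rank-one $T_\ell$, which is done using the quasi-finite polarized period maps from the boundedness step and the fact that $T_\ell$ is totally real of half the dimension of $\Omega$. Your surjectivity argument for part (3) via Theorem \ref{filling holes} is in the right spirit and matches Step 5(b) of the paper, but it is only needed for the maximal Picard rank points once part (2) is in place.
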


Theorem \ref{theorem torelli} immediately yields parts (1), (3), and (4) of Theorem \ref{maintorelli}.  Before the proof, we briefly recall the classification of orbit closures in $\Omega$ under an arithmetic lattice, which is crucial to the argument.

\subsection{Reminder on orbit closures}\label{subsection orbits}
\begin{definition}\label{definition rational rank}
The \emph{rational rank} of a hyperk\"ahler period $p\in\Omega$ is defined as $$\rrk(p):=\dim_\Q \left(\left(H^{2,0}\oplus H^{0,2}\right) \cap \Lambda_\Q\right) \in \{0,1,2\}.$$
We define the rational rank of a primitive symplectic variety to be the rational rank of its Hodge structure on second cohomology.
\end{definition}

Recall that the period domain $\Omega$ can be thought of as the oriented positive Grassmannian $\Gr^{++}(2,\Lambda_\R)$.  For a rational positive-definite sublattice $\ell\subset \Lambda_\Q$ with $\rk(\ell)\leq 2$, we define $T_\ell$ to be the locus of periods for which $\ell\subset(H^{2,0}\oplus H^{0,2})_\R$.  Obviously $T_\ell\supset T_{\ell'}$ if $\ell\subset\ell'$.  Note that if $\rk(\ell)=2$, then $T_\ell$ is a pair of conjugate maximal Picard rank points (and all such pairs arise this way).  For $\rk(\ell)=1$, the set $T_\ell$ is isomorphic to the space $S^{+}(\ell^\perp_\R)$ positive unit-norm vectors in $\ell^\perp_\R$, which is a totally real submanifold of $\Omega$ of real dimension $\rk(\Lambda)-2$.

The important point is that orbit closures for the action of a finite index subgroup $\Gamma\subset\O(\Lambda)$ on the period domain $\Omega$ are classified according to rational rank.  
\begin{proposition}[Theorem 4.8 of \cite{Ver15} and Theorem 2.5 of \cite{Ver17}]  \label{proposition orbits}Assume $\rk(\Lambda)\geq 5$.  We have for $p\in\Omega$:
\begin{enumerate}
\item If $\rrk(p)=0$, then $\overline{\Gamma\cdot p}=\Omega$;
\item If $\rrk(p)=1$, then $\overline{\Gamma\cdot p}$ is a (countable) union of $T_\ell$ with $\rk(\ell)=1$;
\item If $\rrk(p)=2$, then $\overline{\Gamma\cdot p}$ is a (countable) union of $T_\ell$ with $\rk(\ell)=2$.
\end{enumerate} 
\end{proposition}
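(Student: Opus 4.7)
The plan is to derive Proposition \ref{proposition orbits} from the Ratner--Mozes--Shah theorem on orbit closures of unipotent actions, following the approach of Verbitsky. Set $G := \SO(\Lambda_\R)^\circ \cong \SO(3, b_2-3)^\circ$ and, for a fixed period $p \in \Omega$ with underlying positive $2$-plane $W_p \subset \Lambda_\R$, let $P := \Stab_G(p) \cong \SO(2) \times H$, where $H := \SO(W_p^\perp)^\circ \cong \SO(1, b_2-3)^\circ$. An arithmetic subgroup $\Gamma \subset \O(\Lambda)$ of finite index meets $G$ in a lattice $\Gamma^\circ$, and identifying $\Omega = G/P$, the orbit closure $\overline{\Gamma^\circ \cdot p}$ in $\Omega$ is the image of the $P$-orbit closure of the base point $[e] \in \Gamma^\circ \backslash G$. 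Since the compact factor $\SO(2) \subset P$ acts only on orientations within $W_p$ but fixes the unoriented plane, the closure behaviour in $\Omega$ is controlled by $H$-orbits on $\Gamma^\circ \backslash G$.

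The group $H$ is connected and simple, and since $b_2 - 3 \geq 2$, it is generated by its one-parameter unipotent subgroups. By Ratner--Mozes--Shah, the closure $\overline{H \cdot [e]}$ in $\Gamma^\circ \backslash G$ is a closed finite-volume orbit $[e] \cdot L$ for a closed connected subgroup $L \supset H$ which, up to conjugation, is defined over $\Q$; essentially $L$ is the connected $\Q$-Zariski closure of $H$ in $G$. For $\rrk(p) = 0$, no proper rational subspace of $\Lambda_\R$ is preserved by $H$, since $H$ acts irreducibly on $W_p^\perp$ and pointwise fixes only the (irrational) lines in $W_p$; hence $L = G$ and $\Gamma^\circ \cdot p$ is dense in $\Omega$. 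For $\rrk(p) = 1$ with $\langle v \rangle := W_p \cap \Lambda_\Q$, the group $H$ fixes $v$ and preserves $v^\perp$; analysis of $H$-invariant real subspaces of $v^\perp$ shows that the only rational ones are $\{0\}$ and $v^\perp$, so $L = \SO(v^\perp)^\circ$, whose closed orbit projects to $T_{\langle v \rangle} \subset \Omega$. Taking $\Gamma^\circ$-translates then yields the claimed countable union of $T_\ell$'s as $\ell$ ranges over the $\Gamma^\circ$-orbit of $\langle v \rangle$. For $\rrk(p) = 2$ with $W_\Q := W_p \cap \Lambda_\Q$, I would argue directly by discreteness: fixing an integral basis $e_1, e_2$ of $W_\Q \cap \Lambda$, the vectors $\gamma_n e_1, \gamma_n e_2$ remain integral with fixed norms and pairing under any $\gamma_n \in \Gamma^\circ$, hence take only finitely many values. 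Passing to a subsequence, $\gamma_n W_\Q$ is eventually constant, so $\Gamma^\circ \cdot p$ is already closed in $\Omega$ and equals $\bigcup_\ell T_\ell$ over $\ell$ in the $\Gamma^\circ$-orbit of $W_\Q$.

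The main obstacle is the rank $1$ case: one must verify that the $\Q$-Zariski closure of $H$ inside $G$ is exactly $\SO(v^\perp)^\circ$ and that no intermediate $\Q$-subgroup $L$ strictly between $H$ and $\SO(v^\perp)^\circ$ arises from Ratner. The existence of the fixed non-rational vector spanning $W_p \cap v^\perp$ makes $H$ look smaller over $\R$ than over $\Q$, and one needs irreducibility of $H$ on $W_p^\perp$ together with the non-rationality of $W_p$ to rule out proper rational $H$-invariant subspaces of $v^\perp$. A complementary subtlety is ensuring the orbit closure does not escape the countable union of $T_\ell$'s; this follows from the same discreteness of $\Lambda$ applied to $\gamma_n v$ of fixed norm $q(v)$, forcing any accumulation point to contain a rational vector in the $\Gamma^\circ$-orbit of $v$. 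Finally, the hypothesis $\rk(\Lambda) \geq 5$ is essential: for smaller rank, the group $H \cong \SO(1, b_2-3)^\circ$ fails to be generated by one-parameter unipotent subgroups and Ratner's theorem does not directly apply.
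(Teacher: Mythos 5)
The paper does not actually prove this proposition: it is imported wholesale from Verbitsky (\cite[Theorem 4.8]{Ver15} and \cite[Theorem 2.5]{Ver17}), so there is no internal argument to compare against. Your sketch follows the same Ratner-theoretic route as those cited sources --- identify $\Omega$ with $G/P$ for $G=\SO(3,b_2-3)^\circ$, reduce to the noncompact factor $H\cong\SO(1,b_2-3)^\circ$ of $P$, apply Ratner--Mozes--Shah to $H$-orbits on $\Gamma^\circ\backslash G$, and identify the intermediate group $L$ case by case according to the rational rank --- and it has the correct structure, including the correct explanation of where $\rk(\Lambda)\geq 5$ enters. One repair is needed in the $\rrk(p)=2$ case: the assertion that the integral vectors $\gamma_n e_1,\gamma_n e_2$ ``take only finitely many values'' because their norms and pairings are fixed is false on its own, since $\Lambda$ is indefinite and contains infinitely many vectors of any given norm. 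The correct argument starts from a convergent sequence $\gamma_n p\to p'$: then the positive-definite planes $\gamma_n W_\Q$ converge, the vectors $\gamma_n e_i$ of fixed positive norm lie on circles inside these converging planes and hence in a compact subset of $\Lambda_\R$, and only then does discreteness of $\Lambda$ force finiteness and eventual constancy along a subsequence. With that fix, and granting the standard (Borel density / arithmeticity) identification of the Mozes--Shah group $L$ with the connected $\Q$-Zariski closure of $H$ --- the point you correctly flag yourself as the delicate step in the rank-one case, resolved by noting that $\mathfrak{so}(2,m)/\mathfrak{so}(1,m)$ is an irreducible $H$-module for $m\geq 2$, so no intermediate connected subgroup exists --- your argument is a faithful reconstruction of the proof behind the citation.
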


\subsection{Proof of Theorem \ref{theorem torelli}}

We divide the proof into five steps.  Parts (1), (2), and (3) are proven in steps 4, 5(a), and 5(b), respectively.
\vskip1em\noindent
\emph{Step 1.}  Let $p\in \Omega$ be a very general period with Picard group generated by a positive vector.  Then $\xxoverline{P}^{-1}(p)$ is finite.
\begin{proof}  In fact, its equivalent to show $P^{-1}(p)$ is finite by the assumption on the Picard rank.  For the following lemma, we say an ample line bundle $L$ on a primitive symplectic variety $X$ has BBF square $d$ if $q_X(\mathrm{c}_1(L))=d$.
\begin{proposition}  Pairs $(X,L)$ consisting of a primitive symplectic variety $X$ of a fixed locally trivial deformation type and an ample line bundle $L$ with fixed BBF square form a bounded family.
\end{proposition}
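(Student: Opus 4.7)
The plan is to bound such pairs by exhibiting a uniform closed embedding of each $(X,L)$ into a fixed projective space with fixed Hilbert polynomial, which then places them inside a single Hilbert scheme of finite type.

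First I would establish that the numerical invariants of $(X,L)$ are completely determined by the locally trivial deformation type of $X$ and the BBF-square $d$. Since $L$ is ample, $X$ is projective. Applying the Fujiki relations (Proposition~\ref{proposition fujiki relations}) to the functional $\alpha \mapsto \int_X \alpha^{2n}$, which is of type $(-2n,-2n)$ on every locally trivial deformation, gives $\int_X c_1(L)^{2n} = c_X \cdot d^n$, where the Fujiki constant $c_X$ is invariant under locally trivial deformations. More importantly, by the Riemann--Roch-type formula of Corollary~\ref{corollary riemann roch via q}, there is a polynomial $f_X \in \Q[t]$, also invariant under locally trivial deformations, such that
\[
P(m) \;:=\; \chi(X, L^{\otimes m}) \;=\; f_X\!\left(q_X(c_1(L^{\otimes m}))\right) \;=\; f_X(m^2 d).
\]
Thus the Hilbert polynomial of $(X,L)$ depends only on the deformation type and $d$.

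Next I would invoke a Matsusaka-type effective very-ampleness result to produce an integer $N$, depending only on $P$, such that $L^{\otimes N}$ is very ample and has vanishing higher cohomology. Since $X$ has canonical Gorenstein singularities and trivial canonical class by Theorem~\ref{theorem basic symplectic}, one is in the $K$-trivial case where effective results (deducible from Koll\'ar-type effective base-point-freeness, or directly from Matsusaka's big theorem for varieties with numerically trivial canonical class) provide such an $N$. With this uniform $N$ in hand, every pair $(X,L)$ yields a closed embedding $X \hookrightarrow \P^{P(N)-1}$ with Hilbert polynomial $m \mapsto P(mN)$, so its image is a point of a single Hilbert scheme $\mathrm{Hilb}^{P(\cdot N)}(\P^{P(N)-1})$ of finite type.

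Finally I would restrict to the (locally closed) subset of that Hilbert scheme parameterizing primitive symplectic varieties of the prescribed locally trivial deformation type; this is a constructible condition since locally trivial deformations form an open subfunctor (Theorem~\ref{theorem deflt is smooth}) and the deformation type is detected by discrete topological data. Pulling back the tautological $\sO(1)$ recovers the polarization, so the universal family over this subscheme exhibits the boundedness of all $(X,L)$.

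The main obstacle is the second step: a clean Matsusaka-type statement in this singular setting. In the smooth case one could apply Matsusaka directly to $X$, but for singular $X$ one must either appeal to effective very-ampleness results for canonical $K$-trivial varieties, or bootstrap via a simultaneous resolution obtained as in Lemma~\ref{lemma simultaneous resolution}, transporting effective bounds back from a Kähler resolution of the universal locally trivial family. Either way, it is the uniformity of $N$ over the entire locally trivial deformation class (rather than the numerics of a single $(X,L)$) that carries the weight of the proof.
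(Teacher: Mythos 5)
Your overall architecture -- fix the numerics via deformation-invariance of the Fujiki constants, apply Matsusaka to embed every $(X,L)$ in a fixed $\P^N$ by a fixed power of $L$, and then locate the pairs inside a single Hilbert scheme -- is exactly the route the paper takes (it cites \cite[Theorem 2.4]{Matsusaka} for the embedding step, so your worry about a Matsusaka-type statement in the normal/canonical setting is resolved by quoting the appropriate reference rather than by an effective bound). Your use of Corollary \ref{corollary riemann roch via q} to pin down the Hilbert polynomial as $f_X(m^2 d)$ is a fine, if slightly more explicit, version of the same input.

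The genuine gap is in your last step, where you cut out ``the subset of the Hilbert scheme parameterizing primitive symplectic varieties of the prescribed locally trivial deformation type'' and justify its constructibility by saying that locally trivial deformations form an open subfunctor. They do not: $\Def^{\lt}(Z)$ is a \emph{closed} complex subspace of the Kuranishi space $\Def(Z)$ (Section \ref{section locally trivial}, after \cite[(0.3) Corollary]{FK87}), and Theorem \ref{theorem deflt is smooth} asserts smoothness of $\Def^{\lt}(X)$, not openness in $\Def(X)$. Since a projective primitive symplectic variety generally has non-locally-trivial deformations, the universal family over the Hilbert scheme is not locally trivial near a given point, and it is not at all formal that the locus where it is locally trivial (with fibers in the prescribed class) is constructible. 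This is precisely where the paper inserts an inner lemma: one stratifies the (open) primitive-symplectic locus $H'$ of the Hilbert scheme into locally closed pieces on which the second Betti number of the fibers is constant, and then invokes Corollary \ref{corollary betti locus} -- which rests on the deformation theory of $\Q$-factorial terminalizations (Proposition \ref{prop defo} and Namikawa's results) -- to conclude that constancy of $b_2$ forces local triviality on each stratum. Without some substitute for this argument, your ``discrete topological data'' assertion does not close the proof; with it, your proposal becomes the paper's proof.
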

\begin{proof}  Using that the Fujiki constants are locally trivially deformation-invariant and \cite[Theorem 2.4]{Matsusaka}, for any such pair $(X,L)$, the variety $X$ can be embedded with bounded degree in $\P^N$ for some fixed $N$ via the sections of some fixed power $L^k$.  Let $H$ be the corresponding Hilbert scheme of subschemes of $\P^N$ of bounded degree, and let $f:\mathscr{X}\to H$ be the universal family. Let $H' \subset H$ denote the subset over which the fibers of $f$ are primitive symplectic. By semi-continuity and openness of symplecticity, $H'\subset H$ is open.

\begin{lemma}There is a stratification of $H'$ by locally closed reduced subschemes over which $\mathscr{X}$ is locally trivial.
\end{lemma}
\begin{proof}There is a stratification $H_i$ of $H'$ by locally closed reduced subschemes along which the second Betti numbers $(R^2f_*\Q_\scrX)_t$ are constant, for instance by using \'etale cohomology.  By Corollary \ref{corollary betti locus}, $\mathscr{X}$ is locally trivial in an analytic neighborhood of every point $t$ in each $H_i$, and so $\mathscr{X}$ is locally trivial on each $H_i$.
\end{proof}

  It follows from the lemma that the set of pairs $(X,L)$ as in the statement of the proposition together with a choice of an embedding into $\P^N$ as above is a locally closed subscheme $U$ of $H$.  The $\C$-points of the quotient stack $[\PGL_{N+1}\backslash U]$ then parametrize isomorphism classes of the pairs $(X,L)$.  The $\PGL_{N+1}$ action has finite stabilizers on $U$ by Lemma \ref{lemma existence universal deformation}, so by general theory $[\PGL_{N+1}\backslash U]$ is a Deligne--Mumford stack and there is a finite-type \'etale atlas $S\rightarrow [\PGL_{N+1}\backslash U]$.  

To summarize, there is (depending on the fixed locally trivial deformation type and the fixed BBF square) a finite-type scheme $S$ and a locally trivial family $\mathscr{X}\to S$ of primitive symplectic varieties and a relatively ample $\mathscr{L}$ on $\mathscr{X}$ which has the property that every $(X,L)$ as in the statement of the lemma appears finitely many times (and at least once) as a fiber.
\end{proof}
Each component $S_0$ of the scheme $S$ constructed in the proof of the lemma has a period map of the form $P_v:S_0\rightarrow \O(v^\perp)\backslash\Omega_{v^\perp}$ for some $v\in \Lambda$ with fixed square $q(v)=d$, where we think of $\Omega_{v^\perp}=\P(v^\perp)\cap\Omega$.  Moreover, $P_v$ is a local isomorphism and therefore quasifinite, as by e.g. \cite[Theorem 3.10]{B72} the fibers are algebraic.  

Now, for $p\in \Omega$ as in the original claim, suppose $q(v)=d$ for a generator $v$ of the Picard group.  It follows that there are finitely many isomorphism classes of pairs $(X,L)$ where $X$ is a primitive symplectic variety that is locally trivially deformation-equivalent to $X^+$, and $L$ is an ample bundle of BBF square $d$, and the primitive parts of $H^2(X,\Z)_\tf$ and $p$ are abstractly isomorphic as polarized Hodge structures. By the assumption on the Picard rank, there are then finitely many isomorphism classes of projective $X$ locally-trivially deformation equivalent to $X^+$ and with $H^2(X,\Z)_\tf$ abstractly isomorphic to $p$ as semi-polarized Hodge structures.  Moreover, $\Aut(p)=\pm 1$, so for each such $X$ there are finitely many such isomorphisms.

To finish, by Theorem \ref{theorem projectivity criterion} every point in $P^{-1}(p)$ is projective and uniquely polarized by a class of BBF square $d$, and the claim follows.
\end{proof}
\vskip1em\noindent

\hspace{\parindent}For the next step, let $\Omega_{\rrk=0}\subset \Omega$ be the rational-rank-zero locus, let $\overline{\mathfrak{M}}^+_{\rrk=0}\subset \overline{\gothM}^+$ be the preimage of $\Omega_{\rrk=0}$ under $\xxoverline{P}$, and let $\xxoverline{P}_{\rrk=0}$ be the restriction of $\xxoverline{P}$ to $\overline{\mathfrak{M}}^+_{\rrk=0}$.  Note that since we are assuming $\rk(\Lambda)\geq 5$, every $p\in\Omega_{\rrk=0}$ has dense $\O(\Lambda)$-orbit by Proposition \ref{proposition orbits}.
\vskip1em\noindent
\emph{Step 2.} The map $\xxoverline{P}_{\rrk=0}$ is a covering map onto $\Omega_{\rrk=0}$. 
\begin{proof}The claim follows from the following two lemmas.
\begin{lemma}\label{lemma pbar finite}
The map $\xxoverline{P}_{\rrk=0}$ has finite fibers of constant size.  In particular,  it is surjective onto $\Omega_{\rrk=0}$.
\end{lemma}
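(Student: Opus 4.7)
The plan is to leverage the $\O(\Lambda)$-equivariance of $\xxoverline{P}$ and the density of $\O(\Lambda)$-orbits on $\Omega_{\rrk=0}$ from Proposition \ref{proposition orbits}. First I would introduce the fiber-cardinality function $\nu(p) := |\xxoverline{P}^{-1}(p)| \in \Z_{\geq 0} \cup \{\infty\}$ and record two basic properties. On the one hand, $\nu$ is lower semi-continuous on $\Omega$: since $\xxoverline{P}$ is a local homeomorphism between Hausdorff spaces, any $k$ distinct preimages of a point $p$ admit pairwise disjoint open neighborhoods each mapping homeomorphically onto a common open neighborhood of $p$. On the other hand, the $\O(\Lambda)$-action on $\overline{\gothM}^+$ covers its action on $\Omega$, so $\nu$ is constant on $\O(\Lambda)$-orbits.

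Next I would invoke Step 1 to select a point $p_0 \in \Omega_{\rrk=0}$ whose Picard group is generated by a positive vector and for which $n := \nu(p_0) < \infty$; the goal is then to prove $\nu \equiv n$ on $\Omega_{\rrk=0}$. For the lower bound, Proposition \ref{proposition orbits} gives that the orbit $\O(\Lambda) \cdot p_0$ is dense in $\Omega$, hence in $\Omega_{\rrk=0}$; since $\nu \equiv n$ on this orbit by $\O(\Lambda)$-invariance, lower semi-continuity forces $\nu \geq n$ on $\Omega_{\rrk=0}$. For the upper bound, suppose $\nu(p) > n$ for some $p \in \Omega_{\rrk=0}$; then lower semi-continuity produces an open neighborhood $V$ of $p$ in $\Omega$ with $\nu|_V > n$, and by density of $\O(\Lambda) \cdot p_0$ in $\Omega$ we can find $g \in \O(\Lambda)$ with $g \cdot p_0 \in V$, giving the contradiction $n = \nu(p_0) = \nu(g \cdot p_0) > n$.

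To finish, I would note that the image $\xxoverline{P}(\overline{\gothM}^+)$ is a non-empty open subset of $\Omega$ and so meets the dense subset $\Omega_{\rrk=0}$; thus $n \geq 1$, which together with $\nu \equiv n$ on $\Omega_{\rrk=0}$ yields both the constant finite fiber size and the surjectivity of $\xxoverline{P}_{\rrk=0}$ onto $\Omega_{\rrk=0}$. I do not anticipate a single hard obstacle here: the argument is a standard blend of lower semi-continuity, equivariance, and density of orbits, and all of the depth is absorbed into Proposition \ref{proposition orbits}, which rests on Verbitsky's orbit-closure classification via Ratner theory and is precisely what forces the hypothesis $\rk(\Lambda) \geq 5$.
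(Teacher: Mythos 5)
Your upper bound is exactly the paper's argument, but the justification of the lower bound $\nu\geq n$ is a genuine gap. Lower semi-continuity of $\nu$ says that each set $\{\nu\geq k\}$ is open, equivalently $\nu(p)\leq\liminf_{q\rightarrow p}\nu(q)$. Combined with the density of $\O(\Lambda)\cdot p_0$, on which $\nu\equiv n$, this gives $\nu(p)\leq n$ for every $p$ --- that is, it reproves your upper bound, not the lower one. An open set containing a dense subset need not be everything, and the fiber count of a local homeomorphism between Hausdorff spaces really can drop at a point off a dense set: delete one point from one sheet of the trivial double cover of an interval. So ``density of the orbit of $p_0$ plus lower semi-continuity'' cannot by itself force $\nu\geq n$ on $\Omega_{\rrk=0}$.

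The missing ingredient --- and the content of the paper's phrase ``interchanging $p_0$ and $p$'' --- is the density of the orbit of the \emph{arbitrary} point $p$. Since $\rrk(p)=0$ and $\rk(\Lambda)\geq 5$, Proposition \ref{proposition orbits} gives $\overline{\O(\Lambda)\cdot p}=\Omega$; hence the orbit of $p$ meets the nonempty open set $\{\nu\geq n\}$, which contains $p_0$ by lower semi-continuity, and orbit-invariance of $\nu$ yields $\nu(p)\geq n$. This is a second, essential appeal to the orbit-closure classification, and it is exactly why the conclusion is restricted to $\Omega_{\rrk=0}$ rather than holding on all of $\Omega$. With that one repair your argument coincides with the paper's proof; the remaining points (the contradiction argument for the upper bound, and deducing surjectivity from $n\geq1$ because the nonempty open image of $\xxoverline{P}$ must meet the dense subset $\Omega_{\rrk=0}$) are fine.
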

\begin{proof}By the previous step there is a point $p_0\in \Omega_{\rrk=0}$ over which $\xxoverline{P}^{-1}(p_0)$ is finite of size $N$, and therefore $\xxoverline{P}^{-1}(p)$ is finite of size $\leq N$ for every point $p\in \Omega_{\rrk=0}$.  Indeed, if some $p\in \Omega_{\rrk=0}$ had at least $N+1$ preimages then by Hausdorffness we can find pairwise non-intersecting open neighborhoods around any $N+1$ points in the fiber $\xxoverline{P}^{-1}(p)$ that map isomorphically to the same open neighborhood $V$ of $p$, but $p_0$ has dense orbit.  Interchanging $p_0$ and $p$, we see that in fact the fibers are finite of constant size.  
\end{proof}
\begin{lemma}\label{lemma covering space}Suppose $f:X\to Y$ is a local homeomorphism between two Hausdorff\footnote{In fact, only Hausdorffness on the source is used.} topological spaces.  If $f$ has finite fibers of constant size, then it is a covering map onto its image.
\end{lemma}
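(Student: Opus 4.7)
The plan is to show directly that over each $y\in f(X)$ one can exhibit a trivializing neighborhood. Write $f^{-1}(y)=\{x_1,\dots,x_n\}$, where $n$ is the constant fiber size. Since $f$ is a local homeomorphism, I can choose open $U_i\ni x_i$ such that $f|_{U_i}:U_i\to f(U_i)$ is a homeomorphism onto an open subset of $Y$. Using Hausdorffness of $X$, I can shrink the $U_i$ so that they are pairwise disjoint.

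Next I would set $V:=\bigcap_i f(U_i)\cap f(X)$, an open neighborhood of $y$ in $f(X)$, and replace each $U_i$ by $U_i\cap f^{-1}(V)$. Then each $f|_{U_i}:U_i\to V$ is a homeomorphism, and the $U_i$ remain pairwise disjoint. The key (and really the only nontrivial) step is to verify that $f^{-1}(V)=\bigsqcup_i U_i$. The inclusion $\supset$ is clear, so suppose there were $x\in f^{-1}(V)\setminus\bigcup_i U_i$; then $y':=f(x)\in V\subset f(X)$ would admit the $n$ pairwise distinct preimages obtained from the homeomorphisms $f|_{U_i}^{-1}$ together with the additional preimage $x$, contradicting the assumption that every fiber has size exactly $n$.

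I do not anticipate a serious obstacle: the argument is elementary and uses only Hausdorffness of $X$ (to disjoin the $U_i$) and the constant cardinality hypothesis (to rule out extra sheets). The conclusion that $f:X\to f(X)$ is a covering map then follows immediately from the local trivialization $f^{-1}(V)=\bigsqcup_{i=1}^n U_i$ with each $U_i\xrightarrow{\ \cong\ }V$.
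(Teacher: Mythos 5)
Your argument is correct and is essentially the same as the paper's: disjoin the finitely many preimage neighborhoods using Hausdorffness of $X$, shrink them to have a common image $V$, and use the constant fiber size to conclude $f^{-1}(V)=\bigsqcup_i U_i$. The paper's proof is just a more compressed version of exactly this.
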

\begin{proof}
For any $y\in Y$, because $f^{-1}(y)$ is finite we may find nonintersecting open sets $U_x$ around each point $x\in f^{-1}(y)$ on which $f$ is a homeomorphism, and by shrinking we may further assume all the $U_x$ have the same image $U$.  It follows from the assumption on fiber size that $f^{-1}(U)=\bigcup_{x \in f^{-1}(y)} U_x$.
\end{proof}

\end{proof}

\vskip1em\noindent
\emph{Step 3.} The map $\xxoverline{P}_{\rrk=0}$ is an isomorphism of $\overline{\mathfrak{M}}_{\rrk=0}$ onto $\Omega_{\rrk=0}$.

\begin{proof}
The rational-rank-zero locus is 
$$\Omega_{\rrk=0}:=\Omega\smallsetminus \bigcup_{\ell\neq 0} T_\ell$$
 in the notation of Section~\ref{subsection orbits}, and each $T_\ell$ is a closed submanifold of real codimension $\rk(\Lambda)-2$.  Assuming $\rk(\Lambda)\geq 5$, we have that $\Omega_{\rrk=0}$ is locally path-connected and path-connected by \cite[Lemma 4.10]{Ver13} and moreover locally simply connected and simply connected by the following lemma, as the same is true of $\Omega$.
\begin{lemma}\label{lemma simply connected}
If $M$ is a simply connected smooth manifold and $S$ is a countable union of closed submanifolds of (real) codimension $\geq 3$, then $M\smallsetminus S$ is simply connected.
\end{lemma}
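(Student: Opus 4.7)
\emph{Plan.} Write $S=\bigcup_{i\in I} S_i$ as a countable union of closed smooth submanifolds $S_i\subset M$ with $\codim S_i\geq 3$. The argument proceeds by two applications of smooth transversality together with the Baire category theorem, first for paths and then for disks.

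For path-connectedness of $M\smallsetminus S$, given $x,y\in M\smallsetminus S$ choose any smooth path $\gamma\colon [0,1]\to M$ with $\gamma(0)=x$, $\gamma(1)=y$, using that $M$ is connected. For each $i$, the relative Thom transversality theorem implies that the subset of smooth paths with the same endpoints transverse to $S_i$ is open and dense in the space $C^\infty([0,1],M;x,y)$ of smooth paths with fixed endpoints, equipped with the Whitney $C^\infty$-topology. Since $1+\dim S_i<\dim M$, transversality forces disjointness from $S_i$. The mapping space $C^\infty([0,1],M;x,y)$ is Baire, so the countable intersection over $i\in I$ of these open dense sets is nonempty, yielding a smooth path from $x$ to $y$ lying in $M\smallsetminus S$.

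For the vanishing of $\pi_1(M\smallsetminus S)$, let $\gamma\colon S^1\to M\smallsetminus S$ be a loop. By a standard smoothing argument---approximate uniformly by a smooth loop in $M$, then perturb to miss $S$ as in the previous paragraph, and connect the two by a straight-line homotopy inside a tubular neighborhood of $\gamma(S^1)\subset M\smallsetminus S$---we may assume $\gamma$ is smooth. Since $M$ is simply connected, $\gamma$ bounds a smooth map $H\colon D^2\to M$. The relative Thom transversality theorem again provides, for each $i$, an open dense subset of smooth extensions of $\gamma|_{\partial D^2}$ that are transverse to $S_i$; since $2+\dim S_i<\dim M$, any such transverse extension is disjoint from $S_i$. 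A Baire category argument on the space of smooth extensions then produces a smooth disk $H'\colon D^2\to M\smallsetminus S$ bounding $\gamma$, so $\gamma$ is null-homotopic in $M\smallsetminus S$.

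\emph{The main obstacle} is purely bookkeeping: one must verify that the relative Thom transversality theorem applies in our function spaces and that these spaces, constrained by the fixed boundary values, are Baire---both are standard facts in differential topology (see for example Hirsch's \emph{Differential Topology}). The codimension hypothesis enters only through the inequalities $1<3$ and $2<3$, which guarantee that generic smooth paths and smooth disks into $M$ avoid each $S_i$ altogether.
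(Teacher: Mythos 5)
Your overall engine is the same as the paper's: for each $S_i$ the maps avoiding $S_i$ form an open dense subset of a suitable Baire mapping space (openness from compactness of the domain and closedness of $S_i$, density from relative transversality plus $\codim S_i\geq 3$), and a countable intersection of such sets is nonempty. The disk-perturbation step and the path-connectedness step are fine as written.

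There is, however, a genuine gap in your reduction to smooth loops. You homotope a continuous loop $\gamma$ in $M\smallsetminus S$ to a smooth approximation via a straight-line homotopy ``inside a tubular neighborhood of $\gamma(S^1)\subset M\smallsetminus S$.'' But $S$ is only a countable union of closed sets, not a closed set, so $M\smallsetminus S$ need not be open and need not contain \emph{any} neighborhood of the compact set $\gamma(S^1)$: the submanifolds $S_i$ may accumulate onto $\gamma(S^1)$. This is not a hypothetical pathology --- in the application in Section 7 one takes $S=\bigcup_\ell T_\ell$, which is dense in $\Omega$, so $M\smallsetminus S$ has empty interior and the claimed neighborhood never exists. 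Consequently your straight-line homotopy may pass through $S$, and nullhomotoping the smooth approximant in $M\smallsetminus S$ does not a priori nullhomotope $\gamma$ there. The repair is essentially what the paper does: skip the smoothing of $\gamma$ and run the Baire argument directly on the space of \emph{continuous} nullhomotopies $h:S^1\times[0,1]\to M$ with $h(\cdot,0)=\gamma$ (nonempty since $M$ is simply connected, completely metrizable in the compact-open topology). The set of such $h$ avoiding $S_i$ is open as before; for density, note that since $\gamma$ avoids the closed set $S_i$, any $h$ avoids $S_i$ on $S^1\times[0,\delta]$ for some $\delta>0$, so one may smooth and perturb $h$ only on $S^1\times[\delta,1]$ to make it transverse to (hence disjoint from) $S_i$ there. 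The countable intersection is then a nullhomotopy of the original $\gamma$ inside $M\smallsetminus S$.
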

\begin{proof}
This argument is taken from a MathOverflow answer of Martin M. W. \cite{MO}. The result is well-known when $S$ is a single closed submanifold of codimension $\geq 3$. The space of nulhomotopies $S^1\times [0,1]\to M$ of a given path with the compact open topology is a Baire space and the set of homotopies avoiding a single closed submanifold of codimension $\geq 3$ is a dense open subset.  Therefore, the set of homotopies avoiding $S$ is nonempty (and in fact dense) by definition of a Baire space.
\end{proof}

  Thus, the claim follows from the previous step.
\end{proof}

\vskip1em\noindent
\emph{Step 4.} The subgroup $\Mon(\mathfrak{M})$ has finite index in $\O(\Lambda)$.

\begin{proof} The map	$\xxoverline{P}:\overline{\mathfrak{M}}^+_{\rrk=0}\to\Omega_{\rrk=0}$ has finite degree and $\Omega_{\rrk=0}$ is path-connected.  Therefore, $\mathfrak{M}^+$ has finitely many connected components. The group $\Mon(\mathfrak{M})$ is the stabilizer of the component $\gothM$, and is therefore finite index.
\end{proof}

\vskip1em\noindent
\emph{Step 5(a).} The map $\xxoverline{P}$ is an isomorphism of $\overline{\mathfrak{M}}$ onto the complement in $\Omega$ of countably many maximal Picard rank periods.

\begin{proof} By step 3, it is enough to show that the image of $\gothM$ under $P$ contains the locus $\Omega_{\rrk\leq 1}$ of non-maximal Picard rank periods.  The image is open and $\Mon(\gothM)$-invariant, whereas by Proposition \ref{proposition orbits} and the previous step a $\Mon(\gothM)$ orbit closure in $\Omega$ must be a union of $T_\ell$ or all of $\Omega$.  It is therefore enough to show that for any rank one sublattice $\ell\subset \Lambda$, a very general point of $T_\ell$ is contained in $P(\gothM)$.  

Considering a projective $(X,\mu)\in \gothM$ with a polarization $v$ that is orthogonal to $\ell$, we obtain a period map $P_v:S_0\rightarrow \O(v^\perp)\backslash\Omega_{v^\perp}$ as in Step~1 corresponding to a family of locally trivial deformations of $X$ over $S_0$.  The complement of $P_v(S_0)$ is a locally closed subvariety of $\O(v^\perp)\backslash\Omega_{v^\perp}$ and its preimage $V$ in $\Omega_{v^\perp}$ is therefore also a locally closed analytic subvariety.  

It suffices to show that $T_\ell\cap \Omega_{v^\perp}$ is not contained in $V$.  But $T_\ell$ is totally real and has half the (real) dimension of $\Omega$, so the tangent space to $T_\ell\cap\Omega_{v^\perp}$ at a point $p$ is not contained in any proper complex subspace of $T_p\Omega_{v^\perp}$.  It follows that if $T_\ell$ were contained in $V$, it must be contained in the singular locus of $V$, and so by induction we get a contradiction.

\end{proof}

\vskip1em\noindent
\emph{Step 5(b).} When $X^+$ is $\Q$-factorial and terminal, then the same is true of every point $(X,\mu)\in\mathfrak{M}$ and $\xxoverline{P}$ is an isomorphism of $\overline{\mathfrak{M}}$ onto $\Omega$.

\begin{proof} The first claim follows from Lemma \ref{lemma deformation qfactorial}.  For the second claim, by the previous step, it remains to show $P(\gothM)$ contains all maximal Picard rank points, which are in particular projective by Theorem \ref{theorem projectivity criterion}. 

Now for any maximal Picard rank period $p$, let $v\in\Lambda$ be a positive vector which is Hodge with respect to $p$.  A very general deformation of $p$ for which $v$ remains algebraic is in the image of $P$, and the period map $P_v:S_0\rightarrow \O(v^\perp)\backslash\Omega_{v^\perp}$ from step 1 is dominant, so we can find a curve $B\subset \O(v^\perp)\backslash\Omega_{v^\perp}$ through $p$ such that an open set $U\subset B$ lifts to $S_0$, possibly after a base change.  Now apply\footnote{Or \cite[Theorem 1.7]{KLSV18} in the smooth case.} Proposition \ref{filling holes}.
\end{proof}

This concludes the proof.

\begin{remark}
The argument given in Step 1 of the proof of the theorem together with Huybrechts' surjectivity of the period map \cite[Theorem~8.1]{Huy99} implies that the monodromy group is of finite index for irreducible symplectic manifolds even when $b_2=4$ (for $b_2=3$ it is automatic).  For an argument not using Huybrechts' theorem, see \cite[Theorem 2.6]{Ver19}.
\end{remark}

\begin{remark}Some ideas similar to those appearing in the proof of Theorem \ref{theorem torelli} have also been used recently by Huybrechts \cite{Huy18} to prove some finiteness results for hyperk\"ahler manifolds, and these arguments can likely be adapted to the singular setting.
\end{remark}

\section{\texorpdfstring{$\Q$}{Q}-factorial terminalizations}\label{section qfactorial terminalizations}

If $X$ is an algebraic variety, then by \cite[Corollary 1.4.3]{BCHM10} there exists $\Q$-factorial terminalization $\pi:Y\to X$. This is often crucial in the theory of singular symplectic varieties. On the other hand, even if you are mainly interested in projective symplectic varieties, it is often necessary to consider also compact Kähler varieties and certainly the methods of \cite{BCHM10} are not yet established in the Kähler case. The main result of this section, Theorem \ref{theorem qfactorial terminalization}, partially remedies this in the case of primitive symplectic varieties. If we start with a primitive symplectic variety with second Betti number $\geq 5$, it establishes the existence of $\Q$-factorial terminalizations on a bimeromorphic model which is locally trivially deformation equivalent to the initial variety.

In fact, by Theorem \ref{theorem inseparable implies birational} the following is slightly stronger, though we expect it to be equivalent.  For a normal variety $X$ we denote by $\omega_X$ the push forward of the canonical bundle along the inclusion of the regular locus.

\begin{theorem}\label{theorem qfactorial terminalization}
Let $X$ be a primitive symplectic variety satisfying $b_2(X)\geq 5$. Then there exist a primitive symplectic variety $X'$ which is inseparable from $X$ in (locally trivial) moduli and a $\Q$-factorial terminalization of $X'$, that is, a proper bimeromorphic morphism $\pi:Y\to X'$ such that $Y$ has only $\Q$-factorial terminal singularities and $\pi^*\omega_{X'}=\omega_Y = \sO_Y$. In particular, $Y$ is a primitive symplectic variety.
\end{theorem}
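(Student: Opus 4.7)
The plan is to reduce to the projective case using the corollary to Theorem \ref{mainproj}, to apply \cite[Corollary~1.4.3]{BCHM10} there to produce a $\Q$-factorial terminalization, and then to transport the resulting terminalization back via the global Torelli theorem for the terminalization. Concretely, pick a projective primitive symplectic variety $X_0$ locally trivially deformation equivalent to $X$, together with compatible markings $\mu$ of $X$ and $\mu_0$ of $X_0$ by a lattice $\Lambda$. By \cite[Corollary~1.4.3]{BCHM10}, the projective variety $X_0$ admits a $\Q$-factorial terminalization $\pi_0 \colon Y_0 \to X_0$; here $Y_0$ is itself a projective primitive symplectic variety with $\Q$-factorial terminal singularities and satisfies $b_2(Y_0) \geq b_2(X_0) = b_2(X) \geq 5$. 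Set $\Lambda_Y := H^2(Y_0,\Z)_\tf$, and let $N \subset \Lambda_Y$ denote the $q_{Y_0}$-orthogonal complement of $\pi_0^*\Lambda$; by Lemma \ref{lemma symplektisch}, $N$ is negative definite.

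\textbf{Transport via Torelli for $Y_0$.} The period $[\sigma_X] \in \Omega_\Lambda$ of $X$ extends canonically to a period $p_Y \in \Omega_{\Lambda_Y}$ by declaring the sublattice $N \subset (\Lambda_Y)_\C$ to be of type $(1,1)$. Since $Y_0$ is $\Q$-factorial terminal with $b_2 \geq 5$, Theorem \ref{theorem torelli}(3) applied to the connected component $\gothM^{Y_0} \subset \gothM_{\Lambda_Y}$ containing $Y_0$ gives an isomorphism $\xxoverline{P} \colon \overline{\gothM^{Y_0}} \xrightarrow{\sim} \Omega_{\Lambda_Y}$. Pulling back $p_Y$ yields a class $[Y,\nu] \in \overline{\gothM^{Y_0}}$, represented by a $\Q$-factorial terminal primitive symplectic variety $Y$ with marking $\nu$ for which $\nu^{-1}(N)$ is of type $(1,1)$. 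The $N$-Hodge locus $\xxoverline{P}^{-1}(\Omega_{N^\perp}) \subset \overline{\gothM^{Y_0}}$ is path-connected, as $\Omega_{N^\perp}$ is the period domain attached to the signature $(3,b_2(X)-3)$ form on $N^\perp_\Q = \pi_0^*\Lambda_\Q$ and $b_2(X) \geq 5$. Fix a path from $[Y_0,\nu_0]$ to $[Y,\nu]$ inside this locus. By Corollary \ref{corollary density of projectives2} and Theorem \ref{mainproj}, projective representatives are dense along the path, and around each such projective point Proposition \ref{prop defo} applied to the \cite{BCHM10}-contraction produces a Kuranishi-local simultaneous $\Q$-factorial terminalization. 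Analytically continuing these local terminalization families along the path then produces a representative $\wt Y$ of $[Y,\nu]$ together with a proper bimeromorphic contraction $\pi \colon \wt Y \to X'$ realizing $\wt Y$ as a $\Q$-factorial terminalization of a primitive symplectic variety $X'$ with $\pi^* H^2(X',\Z)_\tf = \nu^{-1}(N)^\perp$.

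\textbf{Conclusion and main obstacle.} The restriction of $\nu$ to $\nu^{-1}(N)^\perp$, composed with the identification $\nu^{-1}(N)^\perp \cong \pi_0^*\Lambda \cong \Lambda$, yields a marking $\mu'$ of $X'$ by $\Lambda$ with the property that $P(X',\mu') = P(X,\mu)$ in $\Omega_\Lambda$. Since $X'$ is locally trivially deformation equivalent to $X_0$ (via Proposition \ref{prop defo} applied to the simultaneous terminalization over the $N$-Hodge locus), and $X_0$ is locally trivially deformation equivalent to $X$, the class $(X',\mu')$ lies in the same connected component of $\gothM_\Lambda$ as $(X,\mu)$; part (2) of Theorem \ref{maintorelli} (proven via Theorem \ref{theorem inseparable implies birational} and Corollary \ref{corollary huybrechts strong}) then implies that these two classes are inseparable in $\gothM_\Lambda$, as required. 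The principal obstacle is the globalization step in the middle paragraph: Proposition \ref{prop defo} supplies a simultaneous terminalization only over a Kuranishi neighborhood of a projective base point, so one must glue the locally constructed terminalization families into a single terminalization at $\wt Y$. This uses the uniqueness of the $\Q$-factorial terminalization of a given primitive symplectic variety in a flat family (the target is forced to be the Stein factorization of the contraction of curves whose classes lie in $N$) together with the connectedness of the $N$-Hodge locus, to ensure that the Kuranishi-local contractions agree on overlaps and therefore assemble into a well-defined global contraction.
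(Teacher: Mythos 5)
Your opening and closing moves match the paper's: deform $X$ to a projective $X_0$ (Corollary \ref{corollary density of projectives2}), terminalize $X_0$ by \cite[Corollary 1.4.3]{BCHM10}, and use the global Torelli machinery to come back. But the middle step --- producing the contraction $\pi:\wt Y\to X'$ at the transported point --- has a genuine gap, and it is exactly the step you flag as ``the principal obstacle.'' Proposition \ref{prop defo} does not give you ``Kuranishi-local simultaneous terminalizations'' at arbitrary projective points along your path: it takes as \emph{input} a proper bimeromorphic morphism $Y\to X$ between projective primitive symplectic varieties, which at a general projective point $\wt Y_s$ of the path is precisely what you do not yet have. \cite{BCHM10} produces a terminalization of a given singular $X_s$ from below; it does not produce a contraction of a given $\Q$-factorial terminal $\wt Y_s$ with exceptional lattice exactly $N$ from above --- for that you would need a big and nef class on $\wt Y_s$ vanishing precisely on the $N$-dual curve classes, which requires control of the nef cone that you have not established. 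Moreover, at the non-projective points of the path you have no local family at all, so the open sets you propose to glue need not cover the path, and the open-closed argument implicit in ``analytic continuation'' is not justified (closedness, i.e.\ non-degeneration of the contraction in the limit, is the hard part). Separately, your final citation is off: part (2) of Theorem \ref{maintorelli} only gives that fibers of $P$ consist of pairwise \emph{bimeromorphic} varieties; inseparability of two points with the same period in the same component requires the injectivity of $\xxoverline{P}$ away from maximal Picard rank periods, i.e.\ part (3) / Theorem \ref{theorem torelli}(2), which fails to apply when $\rrk(X)=2$ (that case must be split off, as it is trivially projective).

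The paper avoids the globalization problem entirely by inverting the logic: it applies Proposition \ref{prop defo} exactly once, at $X_0$, obtaining a simultaneous $\Q$-factorial terminalization over the whole of $\Def(Y_0,N)\cong\Def^\lt(X_0)$, and then moves \emph{the period of $X$} into the open image $\wp(\Def^\lt(X_0))$ rather than moving $Y_0$ to the period of $X$. Concretely, for $\rrk(X)\le 1$ the monodromy orbit (closure) of $P(X,\mu)$ is dense in the relevant locus by Proposition \ref{proposition orbits} and Theorem \ref{theorem torelli}(1), so some remarking of $X$ has period equal to that of a fiber $X'=(\scrX_0)_{t'}$ of the single Kuranishi family; injectivity of $\xxoverline{P}$ there gives inseparability, and $X'$ carries a terminalization by construction. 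If you want to salvage your route, you would need to prove that the contraction deforms along the entire $N$-Hodge locus in $\Def(Y_0)$, not just near $Y_0$, which is essentially an independent (and nontrivial) statement.
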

As a consequence of the fact that bimeromorphic varieties without compact curves are isomorphic, see e.g. the proof of Corollary~\ref{cor bij mt general}, we obtain:
\begin{corollary}
Let $X$ be as in Theorem \ref{theorem qfactorial terminalization}, and additionally assume it has Picard rank zero. Then $X$ has a $\Q$-factorial terminalization.\qed
\end{corollary}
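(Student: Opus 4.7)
The plan is to deform the algebraic $\Q$-factorial terminalization of a projective deformation of $X$ back to the period of $X$ via the global Torelli theorem in the $\Q$-factorial terminal case, and then extract $X'$ and $\pi$ from the local comparison of deformation spaces in Proposition~\ref{prop defo}.

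First, $X$ is locally trivially deformation equivalent to a projective primitive symplectic variety $X_0$ by the corollary to Theorem~\ref{mainproj}. Apply \cite{BCHM10} to obtain an algebraic $\Q$-factorial terminalization $\pi_0\colon Y_0\to X_0$; by Theorem~\ref{theorem basic symplectic} and Lemma~\ref{lemma symplektisch}, $Y_0$ is a $\Q$-factorial terminal primitive symplectic variety and $\pi_0^*$ embeds $\Lambda := H^2(X_0,\Z)_\tf$ as a primitive sublattice of $\Lambda_Y := H^2(Y_0,\Z)_\tf$ with $q_{Y_0}$-orthogonal complement $N$ negative definite. Choose compatible markings $\mu_0$ of $X_0$ and $\nu_0$ of $Y_0$, and let $\gothM_Y$ be the connected component of $(Y_0,\nu_0)$ in the $\Lambda_Y$-marked moduli space. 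Since $b_2(Y_0)\geq b_2(X)\geq 5$ and $Y_0$ is $\Q$-factorial terminal, Theorem~\ref{theorem torelli}(3) furnishes an isomorphism $\xxoverline{P}\colon \overline{\gothM}_Y\xrightarrow{\sim}\Omega_{\Lambda_Y}$ and guarantees that every variety parametrized by $\gothM_Y$ is itself $\Q$-factorial terminal.

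The period of $X$ corresponds via $\Lambda\subset\Lambda_Y$ to a point of $\Omega_{\Lambda_Y}$ orthogonal to $N$; by the Torelli isomorphism above it is realized by some $Y\in\overline{\gothM}_Y$, automatically lying in the closed analytic subspace
\[
\gothN := \xxoverline{P}^{-1}\bigl(\Omega_\Lambda\hookrightarrow\Omega_{\Lambda_Y}\bigr)\subset \overline{\gothM}_Y,
\]
which by Corollary~\ref{corollary deformations of line bundles} coincides with the locus where all classes in $N$ stay of Hodge type $(1,1)$ and which also contains $(Y_0,\nu_0)$. To produce $\pi\colon Y\to X'$, I connect $Y_0$ to $Y$ by a path in $\gothN$ passing through points whose fibers are projective (dense by Corollary~\ref{corollary density of projectives2}), cover the path by open sets on which locally trivial Kuranishi families exist, and at each projective base point $Y_s\in\gothN$ invoke Proposition~\ref{prop defo} to obtain a local isomorphism $\Def(Y_s,N)\cong\Def^\lt(X'_s)$ together with a relative proper bimeromorphic morphism of the corresponding Kuranishi families. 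The uniqueness in Proposition~\ref{prop defo}(2) ensures that these local relative terminalizations agree on overlaps; gluing them and then parallel transporting $\pi_0$ along the path yields $\pi\colon Y\to X'$ with $X'$ a locally trivial deformation of $X_0$. Since $X'$ has the same period as $X$, they are inseparable in $\gothM_\Lambda$ by the global Torelli theorem.

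The main obstacle is the gluing in the last step: although Proposition~\ref{prop defo} gives a clean pointwise identification of deformation spaces in the projective setting, assembling the local relative terminalizations into a single bimeromorphic morphism of total spaces over a neighborhood of the path in $\gothN$ requires care with the universality of the Kuranishi families for $\Q$-factorial terminal varieties (cf.\ \cite[Main Theorem]{Nam06}) and, possibly, a mild reformulation of Proposition~\ref{prop defo} to allow non-projective intermediate fibers—though by the density of projective points one can always arrange to apply it only at projective base points. Once this technical point is settled, the remaining assertions—that $Y$ is a primitive symplectic $\Q$-factorial terminal variety and that $\pi^*\omega_{X'}=\omega_Y=\sO_Y$—follow from Theorem~\ref{theorem torelli}(3) and standard properties of terminalizations.
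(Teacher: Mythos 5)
Your argument never uses the Picard rank zero hypothesis, and correspondingly it stops short of the actual assertion of the corollary. What you construct (modulo the gluing issue you yourself flag) is a primitive symplectic variety $X'$ with the same period as $X$, inseparable from $X$ in moduli, admitting a $\Q$-factorial terminalization --- but that is exactly the conclusion of Theorem \ref{theorem qfactorial terminalization}, which the corollary already takes as given. The entire content of the corollary is the passage from ``some $X'$ inseparable from $X$ has a terminalization'' to ``$X$ itself has one,'' and this is where Picard rank zero enters: inseparable points are bimeromorphic by Theorem \ref{theorem inseparable implies birational}; a compact curve in $X$ would produce a nonzero rational $(1,1)$-class in $H^2(X,\Q)$ via the BBF form and hence (by Lefschetz $(1,1)$) a nontrivial line bundle, so Picard rank zero forces $X$ (and $X'$, which has the same period) to contain no compact curves; and bimeromorphic normal compact varieties without compact curves are isomorphic, as recalled in the proof of Corollary \ref{cor bij mt general}. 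Hence $X\cong X'$ and $X$ inherits the terminalization. This one-line deduction is the paper's proof, and it is absent from your write-up.

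Separately, your route to producing $X'$ is a re-derivation of Theorem \ref{theorem qfactorial terminalization} that is both unnecessary (the corollary is stated as a consequence of that theorem, so you may quote it) and incomplete: the gluing of local relative terminalizations along a path in $\gothN$, which you identify as ``the main obstacle,'' is genuinely the hard point and is not resolved. The paper sidesteps it entirely by applying Proposition \ref{prop defo} once, at a single projective point $X_0$, obtaining a family of terminalizations over the germ $\Def(Y_0,N)\cong\Def^\lt(X_0)$, and then using the density of the monodromy orbit of the period of $X$ (Proposition \ref{proposition orbits} together with Theorem \ref{theorem torelli}) to find a fiber over that germ which is inseparable from $X$ --- no parallel transport along a global path in moduli is needed.
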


The proof of Theorem \ref{theorem qfactorial terminalization} is obtained by combining Proposition \ref{prop defo} with Corollary \ref{corollary density of projectives2}, Theorems \ref{theorem inseparable implies birational} and \ref{theorem torelli}, and the existence of $\Q$-factorial terminalizations of projective varieties.

\begin{proof}[Proof of Theorem \ref{theorem qfactorial terminalization}]
Let us consider the universal locally trivial deformation $\scrX\to\Def^\lt(X)$ and choose $t\in \Def^\lt(X)$ nearby such that $X_0:=\scrX_t$ is projective. Take a $\Q$-factorial terminalization $Y_0\to X_0$, denote $N$ the $q_{Y_0}$-orthogonal complement of $H^2(X_0,\Q)$ in $H^2(Y_0,\Q)$, and consider the universal deformation of the pair
\[
\xymatrix{
\scrY_0 \ar[r]\ar[d]& \ar[d]\scrX_0 \\
\Def(Y_0,N) \ar[r]^p& \Def^\lt(X_0)
}
\]
given by Proposition~\ref{prop defo}. By Lemma \ref{lemma deformation qfactorial}, we may assume that every fiber of the map $\scrY_0\to \Def(Y_0,N)$ is $\Q$-factorial and by local triviality and Theorem~\ref{theorem basic symplectic}, every fiber has terminal singularities. In other words, for all $s \in \Def(Y_0,N)$ the morphism $(\scrY_0)_s \to (\scrX_0)_{p(s)}$ is a $\Q$-factorial terminalization.

If $\rrk(X) = 0$, then by Proposition \ref{proposition orbits} and Theorem \ref{theorem torelli} there is a point $t' \in \Def^{\lt}(X_0)$ such that the fiber $X':=(\scrX_0)_{t'}$ and $X$ are inseparable in moduli. By construction, $X'$ has a $\Q$-factorial terminalization, and by Theorem \ref{theorem inseparable implies birational}, $X'$ is bimeromorphic to $X$.  If $\rrk(X)=1$, projective periods are still dense in the orbit closure of the period of $X$ by Theorem~\ref{theorem projectivity criterion}, so the same argument can be applied by choosing the period of $X_0$ to be in the orbit closure of the period of $X$.  Finally, varieties $X$ with $\rrk(X)=2$ are projective so there the result is known anyway by \cite[Corollary~1.4.3]{BCHM10}.
\end{proof}

As an application, we can give examples of divisorially $\Q$-factorial but not $\Q$-factorial varieties.

\begin{example}\label{example qfactoriality}
Consider a projective irreducible symplectic manifold $Y$ of dimenison $2n$ admitting a small contraction $\pi:Y \to X$ where $X$ is a projective primitive symplectic variety and the exceptional locus of $\pi$ is isomorphic to $\P^n$. As $\pi$ has connected fibers, $\P^n$ must be contracted to a point and thus $X$ has an isolated singularity. 
Such examples can be realized on the Hilbert scheme $Y=S^{[n]}$ of $n$ points on a K3 surface $S$ containing a smooth rational curve. As the contraction is small, the variety $X$ is not $\Q$-factorial. By \cite[Theorem 4.1, Proposition 4.5, and Proposition 5.8]{BL16}, this contraction deforms over a smooth hypersurface in $\Def(Y)$. 

We denote by $\ell \subset \P^n \subset Y$ a line and by $L$ the unique line bundle on $Y$ with $q_Y(\mathrm{c}_1(L),\cdot)= (\ell,\cdot)$ where the right--hand side denotes the pairing $N_1(Y)_\Q \tensor N^1(Y)_\Q \to \Q$. It follows that $\mathrm{c}_1(L)$ is $q$-orthogonal to the pullback of any ample divisor on $X$, hence $q_{Y}(\mathrm{c}_1(L))=q_{Y}(\ell)<0$. Replacing $X$ by a small locally trivial deformation, we may assume:

\begin{enumerate}
\item\label{example qfactoriality item two} The contraction $\pi:Y\to X$ deforms and has $\P^n$ as its exceptional set.
\item\label{example qfactoriality item one} The varieties $X$ and $Y$ are Kähler and non-algebraic such that the Picard group of $X$ is trivial and $\Pic(Y)$ has rank one. 
\item\label{example qfactoriality item four} There are no divisors on $Y$, in particular, $X$ is not $\Q$-factorial but divisorially $\Q$-factorial in the sense of Definition \ref{definition qfactorial}.
\end{enumerate}
For \eqref{example qfactoriality item four}, if $L^n$ were represented by an effective divisor $D$, then since $q_X(D)<0$ it is exceptional \cite[Theorem 4.5]{Bou04} and hence uniruled \cite[Proposition 4.7]{Bou04}.  However, the only curves on $Y$ are the ones contracted by $\pi$.
\end{example}


\bibliography{literatur}
\bibliographystyle{alpha}

\end{document}